\renewcommand{\theequation}{\arabic{section}.\arabic{equation}}
\newtheorem{theo}{Theorem}[section]
\newtheorem{lemme}[theo]{Lemma}
\newtheorem{lemmeA}{Lemma A.}
\newtheorem{exaA}{Example A.}
\newtheorem{propo}[theo]{Proposition}
\newtheorem{cor}[theo]{Corollary}
\newtheorem{hyp}[theo]{Assumptions}
\newtheorem{nb}[theo]{Remark}
\newtheorem{exa}[theo]{Example}
\theoremstyle{definition}
\def \leq {\leqslant}
\def \geq {\geqslant}
\numberwithin{equation}{section}
\def\ind#1{\lower5pt\hbox{$\scriptstyle #1$}}
\def \le {\leqslant}
\def \ge {\geqslant}
\def \d {\, \mathrm{d} }
\def \ds {\displaystyle}
\def \ds {\displaystyle}
\def\Q {\mathcal{Q}}
\def\R{{\mathbb R}}
\def \S {{\mathbb S}^2}
\def \E {\mathcal{E}}
\def \n {\widehat{n}}
\def \v {{v}}
\def \vb {\v_{\star}}
\def \w {{w}}
\def \wb {\w_{\star}}
\def \IS {\int_{\S}}
\def \IR {\int_{\R^3}}
\def \IRR {\int_{\R^3 \times \R^3}}
\def\cn{\cdot \n}
\def \F {\mathcal{F}}
\def \us {\widehat{u} \cdot \sigma}
\title[ Haff's law for viscoeslastic hard spheres]
{\textbf{Free cooling and high-energy tails of granular gases with variable restitution coefficient}}
\author{Ricardo J. Alonso \& Bertrand Lods}
\address{\textbf{Ricardo J. Alonso}, Department of Computational and Applied Mathematics, Rice University, Houston, TX 77005-1892.}
\email{ralonso@math.utexas.edu}
\address{\textbf{Bertrand Lods}, Clermont Université, Universit\'{e} Blaise Pascal, Laboratoire de Math\'{e}matiques, CNRS UMR 6620,  BP 10448, F-63000 CLERMONT-FERRAND,
France.}\email{bertrand.lods@math.univ-bpclermont.fr}
\thanks{This work began while both the authors were Core Participants  to the active program of research "Quantum and Kinetic
Transport: Analysis, Computations, and New Application" in residence at \textsc{Institute Of Pure
And Applied Mathematics} (NSF Math. Institute), UCLA, Los Angeles, CA. We thank the organizers of the program for the invitation and the IPAM for excellent working conditions. R. Alonso acknowledges the support from NSF grant DMS-0439872 and ONR grant N000140910290.
}
\begin{document}

\maketitle

\begin{abstract} We prove the so-called generalized Haff's law yielding the optimal algebraic cooling rate of the temperature of a granular gas described by the homogeneous Boltzmann equation for inelastic interactions with non constant restitution coefficient. Our analysis is carried through a careful study of the infinite system of moments of the solution to the Boltzmann equation for granular gases and precise $L^p$ estimates in the self-similar variables.  In the process, we generalize several results on the Boltzmann collision operator obtained recently for homogeneous granular gases with constant restitution coefficient to a broader class of physical restitution coefficients that depend on the collision impact velocity.  This generalization leads to the so-called $L^{1}$-exponential tails theorem for this model.
\end{abstract}

\medskip
\section{Introduction}
\label{intro}
\setcounter{equation}{0}

\subsection{General setting} Rapid granular flows can be successfully described by the Boltzmann equation conveniently modified to account for the energy dissipation due to the inelasticity of collisions. For such a description, one usually considers the collective dynamics of  inelastic
hard-spheres interacting through binary collisions \cite{BrPo,PoSc,Vi}. The loss of mechanical energy due to collisions is characterized by the so-called normal restitution coefficient which quantifies the loss of relative
normal velocity of a pair of colliding particles after the
collision with respect to the impact velocity. Namely, if $v$ and $\vb$  denote the velocities of two particles before they collide, their respective velocities $v'$ and $\vb'$ after collisions are such that
\begin{equation}\label{coef}
(u'\cdot \n)=-(u\cdot \n) \,e,
\end{equation}
where the restitution coefficient $e$ is such that
$0 \leq e \leq 1$ and $\n \in \mathbb{S}^2$  determines the impact direction, i.e. $\n$ stands for the unit vector that points from the $v$-particle center to the $\vb$-particle center at the instant of impact.  Here above
$$u=v-\vb,\qquad u'=v'-\vb',$$
denote respectively the relative velocity before and after collision. The major part of the investigation, at the physical as well as the mathematical levels, has been devoted to the particular case of a constant normal restitution. However, as described in the monograph \cite{BrPo}, it appears that a  more relevant description of granular gases should deal with a variable restitution coefficient $e(\cdot)$ depending on the impact velocity, i.e.
$$e:=e(|u \cdot \n|).$$
The most common model is the one  corresponding to visco-elastic hard-spheres for which the restitution coefficient has been derived by Schwager and P\"{o}schel in \cite{PoSc}. For this peculiar model, $e(\cdot)$  admits the following representation as an infinite expansion series:
\begin{equation}\label{visc}e(|u \cdot \n|)=1+ \sum_{k=1}^\infty (-1)^k a_k |u \cdot \n|^{k/5}, \qquad u \in \R^3, \quad \n \in \mathbb{S}^2\end{equation}
where $a_k \geq 0$ for any $k \in  \mathbb{N}.$ We refer the reader to \cite{BrPo,PoSc} for the physical considerations leading to the above expression (see also the Appendix A  for several properties of $e(\cdot)$ in the case of visco-elastic hard-spheres). This is the principal example we have in mind for most of the results in the paper, though, as we shall see, our approach will cover more general cases including the one of constant restitution coefficient.\medskip

In a kinetic framework, behavior of the granular flows is described, in the spatially situation we shall consider here, by the so-called velocity distribution $f(v,t)$ which represents the probability density of particles with velocity $v \in \R^3$ at time $t \geq 0.$  The time-evolution of the
one-particle distribution function \(f(t,v)\), \(v\in\R^3\),
\(t>0\) satisfies the following
\begin{equation}
\label{be:force} \partial_t f(t,v)  = \Q_e(f,f)(t,v ), \qquad f(t=0,v)=f_0(v)
\end{equation}
where \(\Q_e(f,f)\) is the
inelastic Boltzmann collision operator, expressing the effect of
binary collisions of particles. The collision operator $\Q_e$ shares a common structure with the classical Boltzmann operator for elastic collision \cite{Cer,Vil} but is conveniently modified in order to take into account the inelastic character of the collision mechanism. In particular, $\Q_e$ depends in a very strong and explicit way on the restitution coefficient $e$. Of course, for $e\equiv 1$, one recovers the classical Boltzmann operator. We postpone to Section \ref{Sec21} the precise expression of $\Q_e$. Due to the dissipation of kinetic energy during collisions, in the absence of external forces, the granular temperature
$$\E(t)=\IR f(t,v)|v|^2\d v$$
is continuously decreasing and is expected to go to zero as time goes to infinity, expressing the \textit{cooling of the granular gases}.

Determining the precise rate of decay to zero for the granular temperature is the main goal of the present work. The asymptotic behavior for the granular temperature was first explained in \cite{haff} by P. K. Haff at the beginning of the 80's for the case of constant restitution coefficient, thus, it has become standard to refer to this behavior simply as \textit{Haff's law}.

The mathematical study of Boltzmann models for granular flows was first restricted to the so-called inelastic Maxwell molecules where the collision rate is independent of the relative velocity \cite{BisiCT,BisiCT2,Bobylev-Carrillo-Gamba, ccc, CaTo}.  Later, the mathematical investigation of hard-spheres interactions was initiated in \cite{GaPaVi} for diffusively heated gases and continued in a series of papers \cite{MMR,MiMo} where the first rigorous proof of the Haff's law was presented in the case of constant restitution coefficient.  Additional relevant work in the existence and stability of the homogeneous cooling state can be found in \cite{MiMo2,MiMo3}.  We refer to \cite{Vi} for a mathematical overview of the relevant questions addressed by the kinetic theory of granular gases and complete bibliography on the topic. \smallskip

From the mathematical viewpoint the literature on granular gases with \textit{variable restitution coefficient} is rather limited. However, the Cauchy problem for the homogeneous inelastic Boltzmann equation has been studied in great detail and full generality in \cite{MMR}, including the class of restitution coefficients that we are dealing with in this paper.  For the inhomogeneous inelastic Boltzmann equation the literature is yet more scarce, in this respect we mention the work by one of the authors \cite{AlonsoIumj} that treats the Cauchy problem in the case of near-vacuum data.  It is worthwhile mentioning that the scarcity of results regarding existence of solutions for the inhomogeneous case is explained by the lack of entropy estimates for the inelastic Boltzmann equation, thus, well known theories like the DiPerna-Lions \textit{renormalized solutions} are no longer available.  More complex behavior that involve boundaries, for instance clusters and Maxwell demons, a
 re well beyond of the present techniques.\smallskip

\subsection{Main results and methodology}
Physical considerations and careful dimensional analysis led P. K. Haff \cite{haff} to predict that, for \textit{constant restitution coefficient}, the temperature $\E(t)$ of a granular gas should cool down at a quadratic rate:
$$\E(t)=O\left(t^{-2}\right) \text{ as } t \to \infty.$$
Similar considerations led Schwager and P\"{o}schel \cite{PoSc} to conclude that, for the restitution coefficient associated to the visco-elastic hard-spheres \eqref{visc}, the decay should be slower than the one predicted by Haff, namely at an algebraic rate proportional to $t^{-5/3}$.  These considerations are precisely described in the main result of this paper where the key intuitive fact is that the decay rate of $\E(t)$ is completely determined by the behavior of the restitution coefficient $e(|u \cdot \n|)$ for small impact velocity (Assumption \textit{(1)} in \ref{HYP2}). Precisely, our result is valid for restitution coefficient such that there exist some constants $\alpha >0$ and $\gamma \geq 0$ such that
\begin{equation*}  e(|u \cdot \n|)\simeq 1 - \alpha |u \cdot \n|^\gamma \quad \text{ for } \quad|u \cn| \simeq 0\end{equation*}
and reads as follows:
\begin{theo}\label{haff1} For any initial distribution velocity $f_0 \geq 0$ satisfying the  conditions given by \eqref{initial} with $f_0 \in L^{p_0}(\R^3)$ for some $1 < p_0 < \infty$, the solution $f(t,v)$ to the associated Boltzmann equation \eqref{cauch} satisfies the generalized Haff's law for variable restitution coefficient $e(\cdot)$ fulfilling Assumptions \ref{HYP2} and  \ref{HYPdiff}:
\begin{equation}\label{Haff's}
c (1+t)^{-\frac{2}{1+\gamma}}  \leq \E(t) \leq  C (1+t)^{-\frac{2}{1+\gamma}}, \qquad t \geq 0
\end{equation}
where $\E(t)=\IR f(t,v)|v|^2 \d v$ and $c,C$ are positive constants depending only on $e(\cdot)$ and $\E(0)$.
\end{theo}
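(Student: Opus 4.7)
The plan is to reduce Theorem \ref{haff1} to the two-sided differential inequality
\begin{equation*}
c_1\,\E(t)^{(3+\gamma)/2}\;\le\;-\frac{d\E}{dt}(t)\;\le\;c_2\,\E(t)^{(3+\gamma)/2},
\end{equation*}
whose integration yields \eqref{Haff's} since the scalar ODE $\dot y=-cy^{(3+\gamma)/2}$ has solutions decaying exactly as $t^{-2/(1+\gamma)}$ (the exponent $\tfrac{2}{1+\gamma}$ arises as $\tfrac{1}{p-1}$ with $p=\tfrac{3+\gamma}{2}$). The starting point is the weak formulation of $\Q_e$ applied with test function $\psi(v)=|v|^2$, producing a dissipation identity of the form
\begin{equation*}
-\frac{d\E}{dt}(t)\,\simeq\,\Itt f(t,v)f(t,\vb)\,|u\cdot\n|\bigl(1-e(|u\cdot\n|)^2\bigr)(u\cdot\n)^2\,dv\,d\vb\,d\n,
\end{equation*}
which I would combine with the two-sided comparison $1-e(r)^2\asymp r^\gamma$ coming from Assumption \ref{HYP2}.

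The lower bound on the dissipation (equivalently, the upper bound on $\E$) is the soft direction. Performing the integration in $\n$ reduces matters to a weighted $\IRR$-integral of $f(v)f(\vb)|v-\vb|^{3+\gamma}$. Since mass and momentum are both preserved by the inelastic interaction, one normalizes $f$ to have unit mass and zero mean, so that $f(v)f(\vb)\,dv\,d\vb$ is a probability measure on $\R^3\times\R^3$ whose second moment in $|v-\vb|$ equals exactly $2\E$. Jensen's inequality applied to the convex map $y\mapsto y^{(3+\gamma)/2}$ then gives
\begin{equation*}
\IRR f(v)f(\vb)\,|v-\vb|^{3+\gamma}\,dv\,d\vb\;\ge\;(2\E)^{(3+\gamma)/2},
\end{equation*}
and integrating $\dot\E\le -c_1\E^{(3+\gamma)/2}$ yields the upper bound in \eqref{Haff's} at once.

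The reverse inequality is the heart of the matter and is responsible for the optimality of the cooling rate. It reduces to the \emph{pointwise in time} moment bound $\IR f(t,v)|v|^{k}\,dv\le C_k\,\E(t)^{k/2}$ for $k=3+\gamma$ (and, more robustly, for arbitrarily large $k$, which then covers the contribution of high impacts where $1-e^2\le 1\le |u\cdot\n|^\gamma$ is not quite sharp). Such an estimate asserts that no significant mass leaks into the high-velocity tails as $\E(t)\to 0$ and is natural only in the self-similar variables
\begin{equation*}
g(\tau,w)=\E(t)^{3/2}\,f\bigl(t,\sqrt{\E(t)}\,w\bigr)
\end{equation*}
(with an appropriate time rescaling), in which $g$ has unit mass and unit temperature and satisfies a modified Boltzmann equation with an additional drift term. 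The program is then to establish, uniformly in $\tau$: propagation of all polynomial moments of $g$, uniform $L^{p_0}$ bounds inherited from the $L^p$-theory of $\Q_e$ developed in the paper, and finally the generalized $L^1$-exponential tails theorem announced in the abstract. Unwinding the scaling delivers the required pointwise moment bound on $f$, hence $-\dot\E\le c_2\E^{(3+\gamma)/2}$, closing the ODE argument.

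The principal obstacle is precisely this uniform-in-$\tau$ control of moments and $L^p$ norms of $g$ for a \emph{variable} restitution coefficient. The available machinery for constant $e$ leans on Povzner-type inequalities and structural features tightly tied to the constancy of the restitution; adapting them to an impact-velocity-dependent $e(\cdot)$ is delicate, especially in the low-impact regime $|u\cdot\n|\simeq 0$ where $e\simeq 1$ and the strict dissipation degenerates, as this is exactly the regime that dictates the non-classical exponent $\tfrac{2}{1+\gamma}$. Once these technical ingredients are in place, the scalar ODE argument produces both halves of \eqref{Haff's} simultaneously.
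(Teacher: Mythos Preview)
Your upper bound is essentially the paper's argument, though the paper executes it more cleanly: rather than claiming $1-e(r)^2\asymp r^\gamma$ globally (which fails for large $r$, where $1-e^2$ is merely bounded), it uses the convexity of the full function $\mathbf{\Psi}_e$ (Assumption~\ref{HYP2}\,(4)) to apply Jensen twice and obtain $\dot\E\le -\mathbf{\Psi}_e(\E)$; only \emph{after} concluding $\E(t)\to 0$ does it invoke the small-argument asymptotic $\mathbf{\Psi}_e(x)\simeq C_\gamma x^{(3+\gamma)/2}$. Your version can be patched along these lines.

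The lower bound, however, contains a genuine circularity. You propose to rescale by $\sqrt{\E(t)}$ so that $g$ has unit temperature, and then to establish \emph{uniform-in-$\tau$} moment and $L^{p_0}$ bounds on $g$, invoking in particular the $L^1$-exponential tails theorem. But both the uniform $L^p$ bounds (Theorem~\ref{Lpg}) and the exponential tails (Theorem~\ref{intexpbounds}) are proved in the paper \emph{after} Haff's law, and their proofs use it: Lemma~\ref{LRV} needs $\inf_\tau\mathbf{\Theta}(\tau)>0$, and the tails argument uses the two-sided moment bound \eqref{momentself}, which is \eqref{momentHaff} in rescaled form. With your $\sqrt{\E(t)}$-rescaling the drift coefficient in the rescaled equation is $\xi=-\tfrac12\E^{-3/2}\dot\E$, and the time variable is $\tau=\int_0^t\sqrt{\E(s)}\,ds$; neither is controlled a priori without the very lower bound you are trying to prove (indeed, if $\E$ decayed too fast, $\tau$ would stay bounded and the large-$\tau$ analysis would be vacuous).

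The paper breaks this circle with a two-step bootstrap that your outline is missing. First (Proposition~\ref{Lpgp}), it uses the \emph{fixed} rescaling $V(t)=(1+t)^{1/(1+\gamma)}$, for which $\xi(\tau)\to 0$ is explicit, and proves only a \emph{polynomial} bound $\|g(\tau)\|_{L^p}\le C(1+\tau)^{\kappa_0}$; this uses merely the crude loss-term estimate \eqref{Q-inf} and does not require $\mathbf{\Theta}$ bounded below. That polynomial bound already yields \emph{some} algebraic lower bound $\E(t)\ge c(1+t)^{-\lambda}$ with an uncontrolled $\lambda$. Second (Theorem~\ref{momlam}), a separate moment argument shows that \emph{any} algebraic lower bound on $\E$ self-improves to the optimal one: once $\E(t)\ge c(1+t)^{-\lambda}$, one proves $m_p(t)\le C_p\E(t)^p$ by induction in $p$, whence $-\dot\E\le C\,m_{(3+\gamma)/2}\le C'\E^{(3+\gamma)/2}$ and integration gives the sharp rate. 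This bootstrap is the key idea you are missing; without it, the program of obtaining uniform self-similar estimates before knowing the lower bound does not close.
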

We recover with Theorem \ref{haff1} the optimal decay for constant restitution coefficient ($\gamma=0$) given in \cite{MiMo2} and the one predicted for viscoelastic hard-spheres ($\gamma=1/5$) in \cite{PoSc}. The method of the proof has similarities to that of the constant restitution coefficient \cite{MiMo2} but technically more challenging. \medskip

The main tools to prove Theorem \ref{haff1} are the following:
\begin{enumerate}[$\bullet$\:]
 \item The study of the moments of solutions to the Boltzmann equation using a generalization of the Povzner's lemma developed in \cite{BoGaPa}.
 \item Precise $L^p$ estimates, in the same spirit of \cite{MiMo2}, of the solution to the Boltzmann equation for $p>1$.
 \item For the previous item, the analysis is understood in an easiest way using \textit{rescaled solutions} to \eqref{be:force} of the form
$$f(t,v)=V(t)^3 g(\tau(t),V(t)v)$$
where  $\tau(\cdot)$ and $V(\cdot)$ are fixed time-scaling functions to be crafted depending upon the restitution coefficient.  In the \textit{self-similar variables} $(\tau,w)$ the function $g(\tau,w)$ is a solution of an evolution problem of the type
\begin{equation}\label{qsel}\partial_\tau g(\tau,w) +  \xi(\tau) \nabla_w \cdot \left(w g(\tau,w)\right)= {\Q}_{\widetilde{e}(\tau)}(g,g)\end{equation}
for some  $\xi(\tau)$ depending on the time scale $\tau$.   The collision operator ${\Q}_{\widetilde{e}(\tau)}(g,g)$ is associated to a time-dependent restitution coefficient $\widetilde{e}(\tau)$ (see Section \ref{sec:Self} for details).  In this respect we notice that one notable difference with respect to the case of a constant restitution coefficient treated in \cite{MiMo2} is that the rescaled collision operator depends on the (rescaled) time $\tau$, leading to a \textit{non-autonomous problem} for $g$. This is the main reason why the construction of self-similar profile $g$ independent of $\tau$ obtained in \cite{MiMo2} (Homogeneous Cooling State) is not valid for non constant restitution coefficient.
\end{enumerate}
Let us explain in more details our method of proof:
\begin{enumerate}[1.\:]
\item  We start proving in Sections 2 and 3 an upper bound for the decay of the energy.  This shows that, for restitution coefficients satisfying \ref{HYP2}, the cooling of the temperature is\textit{ at least algebraic}. More precisely, under suitable assumptions on the restitution coefficient $e(\cdot)$, we exhibit a convex and increasing mapping $\mathbf{\Psi}_e$ such that
    $$\dfrac{\d}{\d t}\E(t) \leq -\mathbf{\Psi}_e(\E(t)) \qquad \forall t \geq 0,$$
    which leads to an upper bound for $\E(t)$ of the type $$\E(t) \leq C(1+t)^{-\frac{2}{1+\gamma}} \qquad \forall t \geq 0$$
for some positive constant $C >0.$
\item  The lower bound for the free cooling is much more intricate to establish and consists in proving that the cooling rate found above is optimal, i.e., there exists $c>0$ such that
\begin{equation}\label{lowerintro}\E(t) \geq c(1+t)^{-\frac{2}{1+\gamma}} \quad \forall t \geq 0.
\end{equation}
A careful study of the moments of the solution to \eqref{be:force} shows that it suffices to prove a similar algebraic lower bound with some arbitrary rate, i.e. \eqref{lowerintro} will hold if there exists $\lambda >0$ and $c >0$ such that
$$\E(t) \geq c(1+t)^{-\lambda} \qquad \forall t \geq 0.$$
These two points are proved in the last part of Section 3.
\item To prove that the lower bound with some unprescribed rate $\lambda$ holds  we use, as in \cite{MiMo2}, precise $L^p$ estimates ($p>1$) for solutions to \eqref{be:force} in self-similar variables.  We craft a correct time scaling functions $\tau(t)$ and $V(t)$ such that \eqref{lowerintro} is equivalent to $\mathbf{\Theta}(\tau(t)) \geq c$ (here $\mathbf{\Theta}(\cdot)$ denotes the second moment of $g$).  Once this scale is fixed, the function $g(\tau,w)$ satisfies the rescaled Boltzmann equation \eqref{qsel} with $\xi(\tau) \to 0$ as $\tau \to \infty$.  This is a major difference with the constant restitution coefficient case where $\xi(\tau)\equiv1$.  This technical difficulty is overcome proving that the $L^p$-norms of $g(\tau)$ behaves at most polynomially with respect to $\tau$.  For technical reasons which are peculiar to the inelastic interactions, noticed in \cite{AloCar}, we will restrict ourself to study $L^p$-norms in the range $p \in [1,3)$.  The details can b
 e found in Section 5.
\end{enumerate}
The derivation of precise $L^p$ estimates for the solution $g(\tau,w)$ to \eqref{qsel} requires a careful study of the collision operator $\Q_e$ and its regularity properties. We present in Section 4 a full discussion of the regularity and integrability properties of the gain part of the collision operator $\Q^+_{B,e}$ associated to a general collision kernel $B(u,\sigma)=\Phi(|u|)b(\us)$ satisfying Grad's cut-off assumption (see Section 2 for definition).  This Section is divided in five subsections starting with the Carleman representation of the gain operator $\Q^+_{B,e}$.  It is well-known \cite{lions,MoVi,Wennberg,MiMo} that such a representation is essential for the study of regularizing properties of the gain operator $\Q^+_{B,e}$ when smooth assumptions are imposed on the kernel $B(u,\sigma)$.  Our contribution in Sections 4.3 and 4.4 is to extend the existent theory to the inelastic case with variable restitution coefficient.  Since the estimates of Section 4 will be
  applied for solutions written in self-similar variables, we  make sure that such estimates are \textit{independent} of the restitution coefficient. This allows us to overcome the technical problem of the time dependence of the gain operator in the self-similar variables.  Additional convolution-like inequalities \cite{AloCar,MoVi} are derived in subsection 4.2 assuming minimal regularity of the angular kernel $b(\cdot)$.\medskip

The final part of this work is devoted to the proof of propagation of exponential $L^{1}$-tails where the full power of the Povzner's lemma is exploited.  Much of the argument with a minor adaptation is taken from \cite{BoGaPa}.  This important result is presented in the final Section for convenience and not because the machinery of Sections 4 and 5 is needed to prove it.
\begin{theo}[\textbf{$L^{1}$-exponential tails Theorem}]\label{intexpbounds1} Let $B(u,\sigma)=|u|b(\us)$ be the collision kernel with $b(\cdot)$ satisfying \eqref{normalization} and $b(\cdot) \in L^q(\mathbb{S}^2)$ for some $q \geq 1$.  Assume that the variable restitution coefficient $e(\cdot)$ satisfies Assumptions \ref{HYP2}.  Furthermore, assume that $f_0$ satisfies \eqref{initial}, and that there exists $r_0 >0$ such that
\begin{equation*}
\IR f_0(v)\exp\left(r_0|v|\right) \d v <\infty.
\end{equation*}
Then, there exists some $r\leq r_0$ such that
\begin{equation}\label{expboundssol}
\sup_{t\geq 0}\IR f(t,v)\exp\left(rV(t)|v|\right)\d w < \infty.
\end{equation}
The function $V(t)$ is the appropriate scaling, depending solely on the restitution coefficient, given in \eqref{V(t)}.
\end{theo}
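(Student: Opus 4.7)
The plan is to adapt the Bobylev--Gamba--Panferov inductive scheme on polynomial moments to the self-similar setting \eqref{qsel}. Passing to self-similar variables $w = V(t) v$, $\tau = \tau(t)$, so that $g(\tau, w) = V(t)^{-3} f(t, w/V(t))$, the target bound \eqref{expboundssol} becomes equivalent to
$$\sup_{\tau \geq 0} \IR g(\tau, w) \exp(r|w|) \d w < \infty.$$
Taylor-expanding the exponential, this in turn amounts to the uniform-in-$\tau$ estimate on polynomial moments
$$M_p(\tau) := \IR g(\tau, w)|w|^{p} \d w \;\leq\; A_{0} \, r^{-p} \, p!, \qquad p \in \mathbb{N},\; \tau \geq 0,$$
for some $r \in (0, r_0]$ and some constant $A_0$ independent of $p$.

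Multiplying \eqref{qsel} by $|w|^{p}$, integrating over $\R^3$ and integrating by parts in the drift term yields the moment identity
$$\frac{d}{d\tau} M_p(\tau) = p \, \xi(\tau) M_p(\tau) + \IR \Q_{\widetilde{e}(\tau)}(g,g)(w) |w|^{p} \d w.$$
To the collision integral I apply the Povzner-type inequality for variable restitution coefficients developed in Sections 2--3 (extending the classical estimate from \cite{BoGaPa}): for $p$ even and sufficiently large,
$$\IR \Q_{\widetilde{e}(\tau)}(g,g)\, |w|^{p}\d w \;\leq\; -K_p \, M_{p+1}(\tau)\, M_0 \;+\; S_p(\tau),$$
where $K_p$ grows linearly in $p$, $M_0$ is the conserved mass of $g$, and $S_p(\tau)$ is a finite combinatorial sum of products $M_{k_1}(\tau)\,M_{k_2}(\tau)$ with $k_1, k_2 \leq p$ and $k_1 + k_2 \leq p+1$. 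The critical feature is that these constants can be chosen \textit{uniformly} in $\tau$, i.e.\ independently of the rescaled coefficient $\widetilde{e}(\tau)$, thanks to the a priori bounds on $e(\cdot)$ built into Assumption \ref{HYP2}. Since $\xi(\tau)$ is bounded on $[0, \infty)$ and Jensen's inequality gives $M_{p+1}(\tau) \geq c\, M_p(\tau)^{(p+1)/p}$ (using the uniform lower bound on the mass and energy of $g$), the drift contribution $+p\,\xi(\tau) M_p$ is dominated by the negative dissipation $-K_p M_{p+1} M_0$ once $p$ is sufficiently large.

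The induction on $p$ then proceeds as in \cite{BoGaPa}: assuming $M_k(\tau) \leq A_0 \, r^{-k} \, k!$ for all $k \leq p$ and $\tau \geq 0$, the combinatorial sum $S_p(\tau)$ is controlled by a discrete Leibniz-type estimate in terms of $A_0$ and $r$, and the resulting differential inequality for $M_{p+1}$ propagates the bound from the initial datum, which is itself controlled by the hypothesis $\int f_0 \exp(r_0|v|) \d v < \infty$ provided $r$ is chosen small enough relative to $r_0$. The main obstacle is the drift term $+p\,\xi(\tau) M_p$, which has no counterpart in the autonomous problem treated by \cite{BoGaPa}: one must verify that the linear growth in $p$ of the Povzner coefficient $K_p$, combined with the uniform boundedness of $\xi$ on $[0, \infty)$, yields a strictly negative net coefficient in front of $M_p$ for all $p \geq p_\star$, and it is precisely this balance that pins down the admissible exponential rate $r \leq r_0$.
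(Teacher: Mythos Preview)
Your overall plan---pass to self-similar variables, run the Bobylev--Gamba--Panferov moment induction on $g$, and absorb the new drift term $p\,\xi(\tau)\, M_p$---is exactly the paper's approach. But there is a concrete gap in the mechanism you invoke to absorb the drift.

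You claim the Povzner dissipation coefficient $K_p$ in $-K_p\, M_{p+1}$ grows linearly in $p$, and that this linear growth is what beats $+p\,\xi(\tau)\, M_p$. In fact the hard-sphere Povzner constant is $1-\kappa_p$ (Lemma~\ref{gamm}, Proposition~\ref{povzner}), which is \emph{bounded by $1$}; what decays is $\kappa_p\sim p^{-1/q'}$, and that decay is needed separately to tame the source term $S_p$---a point you do not mention and without which the induction on $S_p$ cannot close. The linear-in-$p$ factor you want does appear, but not in the Povzner inequality itself: it emerges only after passing to the Gamma-renormalized moments $z_p=\mathbf{m}_p/\Gamma(2p+b)$, where Jensen ($\mathbf{m}_{p+1/2}\ge \mathbf{m}_p^{\,1+1/2p}$) together with the asymptotics $\Gamma(2p+1+b)/\Gamma(2p+b)\sim 2p$ converts the bounded dissipation $(1-\kappa_p)\,\mathbf{m}_{p+1/2}$ into a term $c_1\,p\,z_p^{\,1+1/2p}$ in the ODE for $z_p$. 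At the ansatz level $z_p=Q^p$ this reads $c_1\,p\,Q^{1/2}$ against the drift $2p$, and choosing $Q$ large (equivalently $r$ small) closes the comparison. Without the Gamma renormalization your balance ``$K_p\, M_{p+1}$ versus $p\,\xi\, M_p$'' does not close as written, since the raw Povzner coefficient carries no growth in $p$.
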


\subsection{Notations} Let us introduce the notations we shall use
in the sequel. Throughout the paper we shall use the notation
$\langle \cdot \rangle = \sqrt{1+|\cdot|^2}$. We denote, for any
$\eta \in \R$, the Banach space
\[
     L^1_\eta = \left\{f: \R^3 \to \R \hbox{ measurable} \, ; \; \;
     \| f \|_{L^1_\eta} := \int_{\R^3} | f (v) | \, \langle v \rangle^\eta \d\v
     < + \infty \right\}.
\]

More generally we define the weighted Lebesgue space $L^p_\eta
(\R^3)$ ($p \in [1,+\infty)$, $\eta \in \R$) by the norm
$$\| f \|_{L^p_\eta(\R^3)} = \left[ \int_{\R^3} |f (v)|^p \, \langle v
        \rangle^{p\eta} \d\v \right]^{1/p} \qquad 1 \leq p < \infty$$
        while $\| f \|_{L^\infty_\eta(\R^3)} =\mathrm{ess-sup}_{v \in \R^3} |f(v)|\langle v\rangle^\eta$ for $p=\infty.$

For any $k \in \mathbb{N}$, we denote by $H^k=H^k(\R^3)$ the usual Sobolev space defined by
the norm
 \begin{equation*}\label{sobnorm} \| f \|_{H^{k}}  =   \left[ \sum_{|j| \le k} \|\partial_v^j f\|_{L^2} ^p \right]^{1/p} \end{equation*}
where $\partial_v^j$ denotes the partial derivative associated with
the multi-index $j \in \mathbb{N}^N$. Moreover this definition can be
extended to $H^s$ for any $s \ge 0$ by using the Fourier
transform $\mathcal{F}.$  The binomial coefficients for non-integer $p \geq 0$ and $k \in \mathbb{N}$ are
defined as
$$\left(\begin{array}{c}
p\\k
\end{array}\right)=\dfrac{p(p-1)\ldots(p-k+1)}{k!} \quad k \geq 1, \quad \left(\begin{array}{c}
p\\0
\end{array}\right)=1.$$

\section{Preliminaries}

\subsection{The kinetic model}\label{Sec21}

We assume the granular particles to be perfectly smooth hard-spheres of mass
$m=1$ performing inelastic collisions. Recall that, as explained in the Introduction, the inelasticity of the collision mechanism is characterized by a single
parameter, namely the coefficient of normal restitution \(0 \leq
e \leq 1\) which we assume  to be \textit{non constant}. More precisely, let $(v,\vb)$ denote the velocities of two particles before they collide. Their respective velocities after collisions $v'$ and $\vb'$ are given, in virtue of \eqref{coef} and the conservation of momentum, by
\begin{equation}
\label{transfpre}
  v'=v-\frac{1+e}{2}\,(u\cdot \n)\n,
\qquad \vb'=\vb+\frac{1+e}{2}\,(u\cdot \n)\n,
\end{equation}
where the symbol $u$ stands for the relative velocity $u=v-\vb$ and $\n$ is the impact direction.  From the physical viewpoint, a common approximation consists in choosing $e$ as a suitable function of the impact velocity, i.e. $e:=e(|u \cdot \n|)$. The main assumptions on the function $e(\cdot)$ are listed in the following (see \cite{AlonsoIumj}):
\begin{hyp}\label{HYP}
Assume the following hold:
\begin{enumerate}
\item The mapping  $r \in \mathbb{R}_+ \mapsto e(r) \in (0,1]$ is absolutely continuous.
\item The mapping $r\in\mathbb{R}_{+}\rightarrow \vartheta(r):=r\;e(r)$ is strictly increasing.
\end{enumerate}
\end{hyp}
Further assumptions on the function $e(\cdot)$ shall be needed later on. Given assumption $\textit{(2)}$, the Jacobian of the transformation \eqref{transfpre} can be computed as
$$J:=\left|\dfrac{\partial (v',\vb')}{\partial (v,\vb)}\right|=|u\cdot \n| + |u\cdot \n|\dfrac{\d e}{\d r}(|u\cdot \n|)=\dfrac{\d \vartheta}{\d r}(|u\cdot \n|) >0.$$ In practical situations, the restitution coefficient $e(\cdot)$ is usually chosen among the following three examples:
\begin{exa}[\textbf{Constant restitution coefficient}]\label{cons} The most documented example in the literature  is the one in which $$e(r)=e_0 \in (0,1] \qquad \text{ for any } r \geq 0.$$\end{exa}

\begin{exa}[\textbf{Monotone decreasing}]\label{dec} A second example of interest is the one in which the restitution coefficient $e(\cdot)$ is a monotone decreasing function:
\begin{equation}\label{eta}
e(r)=\dfrac{1}{1+a r^\eta}\qquad \forall r \geq 0\end{equation} where $a>0,$ $\eta >0$ are two given constants.\end{exa}

\begin{exa}[\textbf{Viscoelastic hard-spheres}]\label{exa:visco} This is the most physically relevant model treated in this work.  For such a model, the properties of the restitution coefficient have been derived in \cite{BrPo,PoSc} where representation \eqref{visc} is given.  It also accepts the implicit representation
   \begin{equation}\label{visco}e(r) + a r^{1/5} e(r)^{3/5}=1\end{equation}
where $a>0$ is a suitable positive constant depending on the material viscosity (see Figure 1).\end{exa}
\begin{figure} 
 \begin{center}
\psfrag{u}{$r$}\psfrag{v}{$e$}
\includegraphics{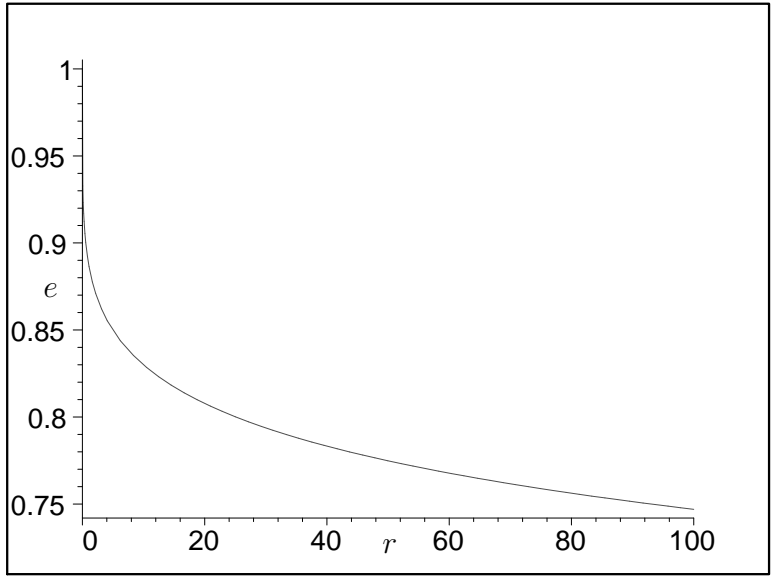}
  \caption{Restitution coefficient for viscoelastic hard-spheres given by Eq. \eqref{visco} with $a=0.12$.}
 \end{center}
\end{figure}
In the sequel, it shall be more convenient to use the following equivalent parametrization of the post-collisional velocities. For distinct velocities $v$ and $\vb$, let $\widehat{u}=\frac{u}{|u|}$ be the relative velocity unit vector. The change of variables
$$\sigma=\widehat{u}-2 \,(\widehat{u}\cdot \n)\n \in\mathbb{S}^2$$
provides an alternative parametrization of the unit sphere $\mathbb{S}^2$ for which the impact velocity reads
$$|u\cdot\n|=|u| \,|\widehat{u} \cdot \n|=|u| \sqrt{\frac{1-\widehat{u} \cdot \sigma}{2}}.$$
Then,  the post-collisional velocities $(v',\vb')$ given in \eqref{transfpre} are transformed to
\begin{equation}
\label{postsig} v'=v-\beta\frac{u-|u|\sigma}{2}, \qquad \vb'=\vb+ \beta\frac{u-|u|\sigma}{2}
\end{equation}
where
$$\beta=\beta\left(|u| \sqrt{\tfrac{1-\widehat{u} \cdot \sigma}{2}}\right)=\frac{1+e}{2} \in \left(\tfrac{1}{2},1\right].$$
In this representation, the \textit{weak formulation} of the Boltzmann collision operator $\Q_{B,e}$ given a collision kernel $B(u,\sigma)$ reads
\begin{equation}\label{Ie3B}
\int_{\mathbb{R}^{3}}\Q_{B,e}(f,g)(v)\psi(v)\d v=\frac{1}{2}\int_{\mathbb{R}^{3} \times \mathbb{R}^3}f(v)g(\vb)\mathcal{A}_{B,e}[\psi](v,\vb)\;\d\vb\d v
\end{equation}
for any suitable test function $\psi=\psi(v)$.  Here
$$\mathcal{A}_{B,e}[\psi](v,\vb)=\int_{\mathbb{S}^{2}}\bigg(\psi( {v'})+\psi( {\vb'})-\psi(v)-\psi(\vb)\bigg) {B}(u, \sigma)\d\sigma$$
with $v',\vb'$ defined in \eqref{postsig}.  We assume that the collision kernel $B(u,\sigma)$ takes the form
$$ {B}(u,\sigma)=\Phi(|u|)b(\widehat{u} \cdot \sigma)$$
where $\Phi(\cdot)$ is a suitable nonnegative function known as \textit{potential}, while the \textit{angular kernel} $b(\cdot)$ is usually assumed belonging to $L^{1}(-1,1)$.  For any fixed vector $\widehat{u}$, the angular kernel defines a measure on the sphere through the mapping $ \sigma \in \mathbb{S}^{2}\mapsto b(\widehat{u}\cdot\sigma)\in[0,\infty]$ that we assume to satisfy the renormalized \textit{Grad's cut-off} hypothesis
\begin{equation}\label{normalization}
\left\|b\right\|_{L^{1}(\mathbb{S}^{2})}=2\pi\left\|b\right\|_{L^{1}(-1,1)}=1.
\end{equation}
The most relevant model in our case is hard-spheres which correspond to $\Phi(|u|)=|u|$ and $b(\widehat{u} \cdot \sigma)=\frac{1}{4\pi}$. We shall consider the \textbf{\textit{generalized hard-spheres collision kernel}} for which $\Phi(|u|)=|u|$ and the angular kernel is non necessarily constant but satisfying \eqref{normalization}. For the particular model of hard-spheres interactions, we simply denote the collision operator $\Q_{B,e}$ by $\Q_e$.

\subsection{On the Cauchy problem} We consider the following homogeneous Boltzmann equation
\begin{equation}\label{cauch}\begin{cases}
\partial_t f(t,v)&=\Q_{B,e}(f,f)(t,v) \qquad \qquad t >0, \; v \in \mathbb{R}^3\\
f(0,v)&=f_0(v), \qquad \qquad  v \in \mathbb{R}^3
\end{cases}
\end{equation}
where the initial datum $f_0$ is a \textit{nonnegative} velocity function such that
\begin{equation}\label{initial}\IR f_0(v)\d v=1, \quad \IR f_0(v) v \d v =0 \quad \text{ and } \quad \IR f_0(v)|v|^3 \d v < \infty.\end{equation}
There is no loss of generality in assuming the two first moments conditions in \eqref{initial} due to scaling and translational arguments.
We say that a nonnegative $f=f(t,v)$ is a solution to \eqref{initial} if $f \in \mathcal{C}([0,\infty),L^1_2(\R^3))$ and
$$\int_0^\infty \d t \IR \bigg(f(t,v) \partial_t \psi(t,v) + \Q_{B,e}(f,f)(t,v)\psi(t,v) \bigg)\d v = \IR f_0(v) \psi(0,v)\d v$$
holds for any compactly supported $\psi \in \mathcal{C}^1([0,\infty) \times \R^3).$
 Under the Assumptions \ref{HYP}, the assumptions \textbf{H1} and \textbf{H2} of \cite{MiMo} are fulfilled (with the terminology of \cite{MiMo}, we are dealing with a  non-coupled collision rate and, more precisely, with the so-called \textit{generalized visco-elastic model}, see \cite{MiMo}, p. 661). In particular, \cite[Theorem 1.2]{MiMo} applies direclty and allows us to state:
\begin{theo}[\textbf{Mischler et \textit{al.}}]
For any  nonnegative velocity function $f_0$ satisfying \eqref{initial}, there is a unique solution $f=f(t,v)$ to \eqref{cauch}. Moreover,
\begin{equation}\label{firstmom}\IR f(t,v)\d v =1, \qquad \IR f(t,v) v \d v =0 \qquad \forall t \geq 0.\end{equation}
\end{theo}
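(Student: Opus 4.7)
The stated result is attributed to Mischler, Mouhot and Ricard, so the plan is essentially to verify that the present setting falls within the scope of their Theorem 1.2 in \cite{MiMo}, and then to extract the conservation identities \eqref{firstmom} from the weak formulation \eqref{Ie3B}. Concretely, I would first check that Assumptions \ref{HYP} correspond to the hypotheses \textbf{H1}–\textbf{H2} of \cite{MiMo} in the \emph{generalized visco-elastic} regime. The absolute continuity of $e(\cdot)$ together with the strict monotonicity of $\vartheta(r)=r\,e(r)$ guarantees that the Jacobian $J=\tfrac{\d\vartheta}{\d r}(|u\cdot\n|)$ is strictly positive, so the pre-to-post collisional change of variables \eqref{transfpre} is a well-defined $C^0$ diffeomorphism; combined with the hard-spheres potential $\Phi(|u|)=|u|$ and Grad's cut-off \eqref{normalization}, this places $\Q_{B,e}$ in the framework where the existence–uniqueness result of \cite{MiMo} holds. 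The initial-data conditions \eqref{initial} (unit mass, zero momentum, finite third moment) are precisely those required there.

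Granted existence and uniqueness of $f\in \mathcal{C}([0,\infty),L^1_2(\R^3))$, the remaining point is \eqref{firstmom}. I would derive mass conservation by testing the weak formulation with $\psi\equiv 1$: in this case the integrand $\psi(v')+\psi(\vb')-\psi(v)-\psi(\vb)$ in the definition of $\mathcal{A}_{B,e}[\psi]$ vanishes identically, so $\tfrac{\d}{\d t}\int_{\R^3} f(t,v)\d v=0$ and the first equality in \eqref{firstmom} follows from $\int f_0\,\d v=1$. For momentum, I would take $\psi(v)=v_j$ for each component $j=1,2,3$: since the representation \eqref{postsig} depends linearly and symmetrically on $u$ and $\sigma$ and satisfies $v'+\vb'=v+\vb$ regardless of the particular value of the restitution coefficient $\beta$, the integrand again vanishes pointwise and $\int v_j f(t,v)\d v$ is conserved.

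The one nontrivial point is that neither $\psi\equiv 1$ nor $\psi(v)=v_j$ is compactly supported, so they do not belong to the test function class used to define weak solutions. I would handle this by the standard truncation argument: take $\psi_R(v)=\chi_R(v)$ or $\psi_R(v)=v_j\chi_R(v)$ with $\chi_R$ a smooth cut-off equal to $1$ on $\{|v|\le R\}$ and vanishing outside $\{|v|\le 2R\}$, apply the weak formulation on a time interval $[0,T]$, and pass to the limit $R\to\infty$. The passage to the limit inside the bilinear term requires control of $\int f(t,v)\langle v\rangle\d v$ uniformly on $[0,T]$ in order to bound the contribution of $|v|\gtrsim R$ in $\mathcal{A}_{B,e}[\psi_R]$; such a bound is guaranteed by the $L^1_3$ propagation statement of \cite{MiMo} under the assumption $\int f_0|v|^3\d v<\infty$ contained in \eqref{initial}.

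The main obstacle, then, is not genuinely an analytic difficulty but the verification step: matching Assumptions \ref{HYP} with the abstract framework of \cite{MiMo} and confirming that the moment bounds provided there are sufficient to justify the truncation limit for the two specific test functions $1$ and $v_j$. Once this is in place, the conservation laws \eqref{firstmom} follow immediately, and the theorem is a direct consequence of \cite[Theorem 1.2]{MiMo}.
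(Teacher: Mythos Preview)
Your proposal is correct and aligns with the paper's treatment: the paper gives no independent proof of this theorem, merely recording before the statement that under Assumptions~\ref{HYP} the hypotheses \textbf{H1}--\textbf{H2} of \cite{MiMo} are fulfilled (the \emph{generalized visco-elastic model}), so that \cite[Theorem~1.2]{MiMo} applies directly. Your additional sketch of the truncation argument for \eqref{firstmom} is more than the paper itself offers, but it is consistent with the standard derivation and with the moment propagation results available in \cite{MiMo}.
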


\subsection{Self-similar variables}\label{sec:Self} Let us discuss precisely the rescaling using self-similar variables.  Let $f(t,v)$ be the solution to
\eqref{cauch} associated to some initial datum $f_0$ satisfying \eqref{initial} and collision kernel
$$B(u,\sigma)=\Phi(|u|) b(\widehat{u} \cdot \sigma)$$
with $b(\cdot)$ satisfying \eqref{normalization}.  The rescaled solution $g=g(\tau,w)$ is defined such that
\begin{equation}\label{resca}
f(t,v)=V(t)^3 g(\tau(t),V(t)v)
\end{equation}
where $\tau(\cdot)$ and $V(\cdot)$ are time-scaling functions to be determined solely on the behavior of the restitution coefficient in the low impact velocity region.  Since these are scaling functions they are increasing and satisfy   $\tau(0)=0$ and $V(0)=1$. One has
$$1=\IR f(t,v)\d v=\IR g(\tau(t),w)\d w \qquad \forall t \geq 0$$
and $g(0,w)=f_0(w)$.
Furthermore, some elementary calculations show that the function $g(\tau,w)$ satisfies
\begin{equation}\label{scal}V(t)^{-2} \Q_e(f,f)(t,v) =
\dot{\tau}(t) {V} (t)\partial_\tau g(\tau,w) +
\dot{V}(t) \nabla_w \cdot (w g(\tau,w))\bigg|_{\underset{\tau=\tau(t)}{w=V(t)v}}\end{equation}
where the dot symbol denotes the derivative with respect to $t$. Moreover, the expression of the collision operator in the self-similar variables is
$$  V(t)^{-2}\Q_{B,e}(f,f)\left(t,\dfrac{v}{V(t)}\right)=\Q_{B_{\tau},\widetilde{e}_{\tau}}(g,g)(\tau(t),v)
$$
where the rescaled collision kernel $B_\tau$ is given by $$B_{\tau(t)}(u,\sigma):=V(t)\Phi\left(\dfrac{|u|}{V(t)}\right)b(\widehat{u} \cdot
\sigma).
$$
The rescaled restitution coefficient $\widetilde{e}_\tau$ has been defined by
$$
\widetilde{e}_\tau\::\:(r,t)\longmapsto\widetilde{e}_{\tau(t)}(r):=e\left(\frac{r}{V(t)}\right)\ \ \ \mbox{for}\ \ r\geq0,\ \ t\geq0.
$$
Since the mapping $t \in \R^+ \longmapsto \tau(t) \in \R^+$ is injective with inverse $\zeta$, one can rewrite equation \eqref{scal} in terms of $\tau$ only.  Thus, $g(\tau,w)$ is a solution to the following rescaled Boltzmann equation:
\begin{equation}\label{eqgt}
 \lambda(\tau)\partial_\tau g(\tau,w) +
\xi(\tau) \nabla_w \cdot (w g(\tau,w)) =\Q_{B_{\tau},\widetilde{e}_{\tau}}(g,g)(\tau,w) \qquad \forall \tau >0
\end{equation}
with $$\lambda(\cdot)=\dot{\tau}(\zeta(\cdot))V(\zeta(\cdot)) \quad  \text{ and } \quad \xi(\cdot)=\dot{V}(\zeta(\cdot)),$$ and model parameters
\begin{equation}\label{eqB}
B_\tau(u,\sigma)=V(\zeta(\tau))\Phi\left(\dfrac{|u|}{V(\zeta(\tau))}\right)b(\widehat{u}\cdot\sigma)\ \ \ \mbox{and}\ \ \ \widetilde{e}_{\tau}(r)=e\left(\dfrac{r}{V(\zeta(\tau))}\right).
\end{equation}
Notice that, for generalized hard-spheres interactions (i.e. whenever
$\Phi(|u|)=|u|)$ one has $B_\tau=B$. For true hard-spheres interactions, i.e. $b(\cdot)=\frac{1}{4\pi}$, one simply denotes the rescaled collision operator by $\Q_{\widetilde{e}_\tau}$.  In addition, observe that the rescaled operator depends on time, and therefore, $g$ is a solution to a non-autonomous problem.

\subsection{Povzner-type inequalities}\label{sec:povzner} We extend in this section the results of \cite{BoGaPa} and \cite{MiMo2} to the case of variable restitution coefficient satisfying \ref{HYP}.  We consider a collision kernel of the form
$$
B(u,\sigma)=\Phi(|u|)b(\widehat{u}\cdot \sigma),
$$
with angular kernel $b(\cdot)$ satisfying the renormalized Grad's cut-off assumption \eqref{normalization}.  Let $f$ be a \textit{nonnegative} function satisfying \eqref{firstmom} and $\psi(v)=\Psi(|v|^2)$ be a given test-function with $\Psi$ nondecreasing and convex. Then, Eq. \eqref{Ie3B} leads to
$$
\int_{\mathbb{R}^{3}}\Q_{B,e}(f,f)(v)\psi(v)\d v=\frac{1}{2}\int_{\mathbb{R}^{3} \times \mathbb{R}^3}f(v)f(\vb) \mathcal{A}_{B,e}[\psi](v,\vb)\;\d\vb\d v
$$
with
$$
\mathcal{A}_{B,e}[\psi](v,\vb)=\Phi(|u|)\left(A^+_{B,e}[\Psi](v,\vb)-A^-_{B,e}[\Psi](v,\vb)\right)
$$
where
$$
A_{B,e}^+[\Psi](v,\vb)=\int_{\mathbb{S}^{2}}\left(\Psi( |{v'}|^2)+\Psi( |{\vb'}|^2)\right)b(\widehat{u}\cdot\sigma)\d\sigma.
$$
Using \eqref{normalization} we also have,
$$
A_{B,e}^-[\Psi](v,\vb)=\int_{\mathbb{S}^{2}} \left(\psi( {v })+\psi( {\vb })\right)b(\widehat{u}\cdot\sigma)\d\sigma= \left(\Psi( |v |^2)+\Psi( |\vb|^2)\right).
$$
Following \cite{BoGaPa}, we define  the velocity of the center of mass $U=\dfrac{v+\vb}{2}$ so that
$$
v'=U+\dfrac{|u|}{2}\omega, \qquad \vb'=U-\dfrac{|u|}{2}\omega \qquad \text{ with } \quad \omega=(1-\beta)\widehat{u}+\beta \sigma.
$$
Recall that for any vector $x \in \R^3$, we set $\widehat{x}=\frac{x}{|x|}$.  When $e$, or equivalently $\beta$, is constant, the strategy of \cite{BoGaPa} consists, roughly speaking, in performing a suitable change of unknown $\sigma \to \widehat{\omega}$ to carefully estimate $A^+_{B,e}[\psi]$.  For variable $\beta$, such strategy does not apply directly.  Instead, observe that $|\omega| \leq 1$ and, since $\Psi$ is increasing, one has
\begin{equation*}\begin{split}
\Psi( |{v'}|^2)+\Psi( |{\vb'}|^2) &\leq \Psi\left(|U|^2+\frac{|u|^2}{4} + |u| |U| \widehat{U} \cdot \omega\right) + \Psi\left(|U|^2+\frac{|u|^2}{4}- |u| |U| \widehat{U} \cdot \omega\right)\\
&=\Psi\left(E \frac{1+\xi\;\widehat{U}\cdot\omega}{2} \right)+\Psi\left(E \frac{1-\xi\;\widehat{U}\cdot\omega}{2} \right)
\end{split}\end{equation*}
where we have set  $E:=|v|^{2}+|\vb|^{2}=2|U|^{2}+\frac{|u|^2}{2}$ and $\xi=2\frac{|U|\,|u|}{E}$. Since $\Psi(\cdot)$ is convex the mapping $$\Psi_0(t)=\Psi(x+ty)+\Psi(x-ty)$$ is even and nondecreasing for $t\geq0$ and $x,y\in\mathbb{R}$ (see \cite{BoGaPa}).  Therefore, using that $\xi \leq 1$ one gets
\begin{equation}\label{import}
\Psi( |{v'}|^2)+\Psi( |{\vb'}|^2) \leq \Psi\left(E \frac{1+\widehat{U}\cdot\omega}{2} \right)+\Psi\left(E \frac{1-\widehat{U}\cdot\omega}{2} \right).
\end{equation}
In the case that $\widehat{U}\cdot\sigma\geq0$ it follows that
$$
\left|\widehat{U}\cdot\omega\right|=\left|(1-\beta)\widehat{U}\cdot\widehat{u}+\beta\widehat{U}\cdot\sigma\right|\leq(1-\beta)+\beta\widehat{U}\cdot\sigma,
$$
thus, using the fact that $\Psi_0(t)$ is even and nondecreasing for $t\geq0$, we conclude from \eqref{import} that
\begin{equation*}
\Psi( |{v'}|^2)+\Psi( |{\vb'}|^2) \leq \Psi\left(E \frac{2-\beta+\beta\widehat{U}\cdot\sigma}{2}\right)+\Psi\left(E \frac{\beta-\beta\widehat{U}\cdot\sigma}{2} \right).
\end{equation*}
When $\widehat{U}\cdot\sigma\leq0$ a similar argument shows that
\begin{equation*}
\Psi( |{v'}|^2)+\Psi( |{\vb'}|^2) \leq \Psi\left(E \frac{2-\beta-\beta\widehat{U}\cdot\sigma}{2}\right)+\Psi\left(E \frac{\beta+\beta\widehat{U}\cdot\sigma}{2} \right).
\end{equation*}
Hence, setting $\tilde{b}(s)=b(s)+b(-s)$ and using these last two estimates with the change of variables $\sigma\rightarrow-\sigma$ we get
\begin{align}
A_{B,e}^+[\Psi](v,\vb)&\leq \int_{\{\widehat{U}\cdot\sigma\geq0\}} \left[\Psi\left(E \frac{2-\beta+\beta\widehat{U}\cdot\sigma}{2}\right)+\Psi\left(E \frac{\beta-\beta\widehat{U}\cdot\sigma}{2} \right)\right]\tilde{b}(\widehat{u}\cdot\sigma)\d\sigma\nonumber\\
&\leq \int_{\{\widehat{U}\cdot\sigma\geq0\}} \left[\Psi\left(E \frac{3+\widehat{U}\cdot\sigma}{4}\right)+\Psi\left(E \frac{1-\widehat{U}\cdot\sigma}{4} \right)\right]\tilde{b}(\widehat{u}\cdot\sigma)\d\sigma,\label{Ae+}
\end{align}
where the second inequality can be shown writing
\begin{multline*}
\frac{2-\beta+\beta\widehat{U}\cdot\sigma}{2}=\frac{1}{2}+\left(\frac{1}{2}-\frac{\beta}{2}\left(1-\widehat{U}\cdot\sigma\right)\right)\ \ \ \mbox{and},\\
\frac{\beta-\beta\widehat{U}\cdot\sigma}{2}=\frac{1}{2}-\left(\frac{1}{2}-\frac{\beta}{2}\left(1-\widehat{U}\cdot\sigma\right)\right).
\end{multline*}
The term in parenthesis is maximized when $\beta=1/2$, thus the monotonicity of $\Psi_0$ implies the result.\medskip

Next, we particularize the previous estimates to the important case $\Psi(x)=x^p$.  This choice will lead to the study of the moments of solutions.
\begin{lemme}\label{gamm}
 Let $q\geq 1$ be such that  $b\in L^{q}(\mathbb{S}^{2})$.  Then, for any restitution coefficient $e(\cdot)$ satisfying Assumptions \ref{HYP} and any real $p \geq 1$, there exists an explicit constant $\kappa_p>0$ such that
\begin{multline}\label{ABE}
\Phi(|u|)^{-1}\mathcal{A}_{B,e}[|\cdot|^p](v,\vb) \leq -(1-\kappa_p)\left(|v|^{2p}+|\vb|^{2p}\right)
\\+\kappa_p \left[\left(|v|^2+|\vb|^2\right)^p-|v|^{2p}-|\vb|^{2p}\right].
\end{multline}
This constant $\kappa_p$ has the following properties:
\begin{itemize}
\item [(1)] $\kappa_1\leq 1$.
\item [(2)] For $p\geq1$ the map $p \mapsto \kappa_p$ is strictly decreasing. In particular, $\kappa_p<1$ for $p>1$.
\item [(3)] $\kappa_p=O\left(1/p^{1/q'}\right)$ for large $p$, where $1/q+1/q'=1$.
\item [(4)] For $q=1$, one still has $\kappa_p\searrow0$ as $p\rightarrow\infty$.
\end{itemize}
\end{lemme}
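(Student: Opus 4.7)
The plan is to substitute $\Psi(x) = x^p$ into the bound \eqref{Ae+} already obtained in the excerpt and to factor out the total energy $E := |v|^2 + |\vb|^2$. Setting
$$\gamma_p(s) := \left(\frac{3+s}{4}\right)^p + \left(\frac{1-s}{4}\right)^p,\qquad s\in[0,1],$$
the inequality \eqref{Ae+} gives directly
$$A_{B,e}^+[|\cdot|^p](v,\vb) \leq E^p \int_{\{\widehat{U}\cdot\sigma \geq 0\}}\gamma_p(\widehat{U}\cdot\sigma)\,\tilde{b}(\widehat{u}\cdot\sigma)\,\d\sigma.$$
I would then \emph{define}
$$\kappa_p := \sup_{\widehat{U},\widehat{u}\in\mathbb{S}^2}\int_{\{\widehat{U}\cdot\sigma \geq 0\}}\gamma_p(\widehat{U}\cdot\sigma)\,\tilde{b}(\widehat{u}\cdot\sigma)\,\d\sigma,$$
so that, together with $A_{B,e}^-[|\cdot|^p] = |v|^{2p}+|\vb|^{2p}$, one obtains $\Phi(|u|)^{-1}\mathcal{A}_{B,e}[|\cdot|^p] \leq \kappa_p E^p - (|v|^{2p}+|\vb|^{2p})$, which rearranges into \eqref{ABE} after adding and subtracting $\kappa_p(|v|^{2p}+|\vb|^{2p})$ and using $E^p = (|v|^2+|\vb|^2)^p$.

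Property \textit{(1)} follows because $\gamma_1 \equiv 1$, while the change of variable $\sigma \mapsto -\sigma$ combined with $\tilde{b}(-s) = \tilde{b}(s)$ forces $\int_{\{\widehat{U}\cdot\sigma \geq 0\}}\tilde{b}(\widehat{u}\cdot\sigma)\d\sigma = \tfrac{1}{2}\int_{\mathbb{S}^2}\tilde{b}(\widehat{u}\cdot\sigma)\d\sigma = \|b\|_{L^1(\mathbb{S}^2)} = 1$, independently of $\widehat{U},\widehat{u}$. For \textit{(2)}, for each fixed $s\in[0,1)$ both $\left(\tfrac{3+s}{4}\right)^p$ and $\left(\tfrac{1-s}{4}\right)^p$ are strictly decreasing in $p$ (the bases lie in $(0,1)$), while $\gamma_p(1) = 1$ for all $p$; since $\{\widehat{U}\cdot\sigma = 1\}$ carries no mass under $\tilde{b}(\widehat{u}\cdot\sigma)\d\sigma$, this pointwise monotonicity transfers to the integral, and a compactness argument on $\mathbb{S}^2 \times \mathbb{S}^2$ yields the strict monotonicity of $\kappa_p$, hence also $\kappa_p < 1$ for $p>1$.

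For \textit{(3)}, H\"older's inequality with conjugate exponent $q'$ gives
$$\kappa_p \leq \|\gamma_p\|_{L^{q'}(\mathbb{S}^2_+)}\,\|\tilde{b}\|_{L^q(\mathbb{S}^2)},$$
and rotational invariance of the surface measure on $\mathbb{S}^2$ reduces the first factor to a universal constant times $\bigl(\int_0^1 \gamma_p(s)^{q'}\d s\bigr)^{1/q'}$. Substituting $t = 1-s$ and isolating the dominant contribution $\left(1 - \tfrac{t}{4}\right)^{pq'}$ near $s=1$ via a Laplace-type estimate gives $\int_0^1 \gamma_p(s)^{q'}\d s = O((pq')^{-1})$, hence $\kappa_p = O(p^{-1/q'})$. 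For \textit{(4)} the H\"older trick degenerates at $q=1$, so I would split the integral into $\{0\leq \widehat{U}\cdot\sigma \leq 1-\varepsilon\}$ (where $\gamma_p \leq \gamma_p(1-\varepsilon) \to 0$ as $p\to\infty$ for each fixed $\varepsilon$) and the spherical cap $\{\widehat{U}\cdot\sigma > 1-\varepsilon\}$ (whose surface measure is $O(\varepsilon)$); uniform absolute continuity in $\widehat{u}$ of the map $A \mapsto \int_A \tilde{b}(\widehat{u}\cdot\sigma)\d\sigma$, obtained after rotating $\widehat{u}$ to a fixed direction so as to reduce matters to absolute continuity of a single $L^1(\mathbb{S}^2)$ integral, controls the cap piece uniformly, and letting $\varepsilon = \varepsilon(p) \to 0$ at a suitable rate yields $\kappa_p \searrow 0$. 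I expect the main technical obstacle to be the sharp rate in \textit{(3)}, which requires the Laplace-type asymptotics of $\int_0^1\gamma_p^{q'}\d s$ together with a careful commutation of the supremum in $\widehat{U},\widehat{u}$ with the H\"older bound.
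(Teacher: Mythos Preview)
Your proposal is correct and follows the same architecture as the paper: define $\kappa_p$ as the supremum over $(\widehat U,\widehat u)\in\mathbb{S}^2\times\mathbb{S}^2$ of $\int_{\{\widehat U\cdot\sigma\ge 0\}}\gamma_p(\widehat U\cdot\sigma)\,\tilde b(\widehat u\cdot\sigma)\,\d\sigma$, derive \eqref{ABE} from $A^+_{B,e}[\Psi_p]\le\kappa_p E^p$, and verify (1)--(4) through the same mechanisms. Two minor points of contrast are worth recording. For \textit{(3)} the paper avoids any Laplace-type asymptotics: after H\"older it simply uses $(a+b)^{q'}\le 2^{q'-1}(a^{q'}+b^{q'})$ and evaluates $\int_{-1}^{1}\big(\tfrac{3+s}{4}\big)^{pq'}\d s \le \tfrac{4}{pq'+1}$ by direct integration, so the ``sharp rate'' you flag as the main technical obstacle is in fact immediate. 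For \textit{(4)} the paper instead observes that the integral is continuous in $(\widehat U,\widehat u)$ (compactness of $\mathbb{S}^2\times\mathbb{S}^2$), so the supremum defining $\kappa_p$ is attained, and then applies dominated convergence at the maximizer; your splitting-plus-uniform-absolute-continuity argument is a valid alternative and has the advantage of not depending on where the supremum is realized for each $p$.
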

\begin{proof}
Let $\Psi_p(x)=x^p$. From \eqref{Ae+}, one sees that $$A_{B,e}^+[\Psi_p](v,\vb)\leq\kappa_p\;E^{p}$$
  where we recall that $E=|v|^{2}+|\vb|^{2}$ and we set
\begin{equation}\label{gammap}\kappa_p=\sup_{\widehat{U},\widehat{u}}\int_{\widehat{U}\cdot \sigma \geq 0} \left[\Psi_p\left(\dfrac{3+\widehat{U}\cdot \sigma}{4}\right)  + \Psi_p\left(\dfrac{1-\widehat{U}\cdot\sigma}{4}\right) \right]\tilde{b}(\widehat{u}\cdot\sigma)\d\sigma.\end{equation}
It is clear that the above inequality yields \eqref{ABE}. Let us prove that $\kappa_p$ satisfies the aforementioned conditions.  First, we use H\"{o}lder inequality to obtain
$$\kappa_p\leq  4\pi\left\|b\right\|_{L^{q}(\mathbb{S}^{2})} \left(\int_{-1}^1\left[\Psi_p\left( \frac{3+s}{4}\right)+\Psi_p\left( \frac{1-s}{4} \right)\right]^{q'}\d s\right)^{1/q'}<\frac{16\pi\left\|b\right\|_{L^{q}(\mathbb{S}^{2})}}{(q'p+1)^{1/q'}}.$$
This proves that $\kappa_p$ is finite and also yields item (3) for $q>1$.  For items (1) and (2) observe that the integral in the right-hand-side \eqref{Ae+} is continuous in the vectors $\widehat{U},\;\widehat{u}\in \mathbb{S}^{2}$.  This can be shown by changing the integral to polar coordinates.  Thus, the supremum in these arguments is achieved.  Therefore, there exist $\widehat{U}_0,\;\widehat{u}_0\in \mathbb{S}^{2}$ (depending on the angular kernel $b$) such that
\begin{equation*}
\kappa_p= \int_{\{\widehat{U}_0\cdot\sigma\geq0\}} \left[\Psi_p\left( \frac{3+\widehat{U}_0\cdot\sigma}{4}\right)+\Psi_p\left(\frac{1-\widehat{U}_0\cdot\sigma}{4} \right)\right]\tilde{b}(\widehat{u}_0\cdot\sigma)\d\sigma.
\end{equation*}
A simple computation with this estimate shows that $\kappa_1=\left\|b\right\|_{L^{1}(\mathbb{S}^{2})}=1$.  Moreover, the integrand is a.e. strictly decreasing as $p$ increases and this proves (2).  Finally, let $p\rightarrow\infty$ in this expression and use Dominated convergence to conclude (4) for the case $q=1$.
\end{proof}
The above lemma is the analogous of \cite[Corollary 1]{BoGaPa} for variable restitution coefficient $e(\cdot)$ and it proves that the subsequent results of \cite{BoGaPa} extend readily to variable restitution coefficient. In particular,  \cite[Lemma 3]{BoGaPa} reads\footnote{Notice that, though stated for hard-spheres interactions only, \cite[Lemma 3]{BoGaPa} applies to our situation thanks to the above Lemma \ref{gamm} and \cite[Lemma 1]{BoGaPa}.}:
\begin{propo}\label{povzner} Let $f$ be a nonnegative function satisfying \eqref{firstmom}.  For any $p \geq 1$, we set
$$
m_p=\IR f(v)|v|^{2p}\d v.
$$
Assume that the collision kernel $B(u,\sigma)=|u|b(\widehat{u}\cdot \sigma)$ is such that $b(\cdot)$ satisfies \eqref{normalization} with $b(\cdot) \in L^q(\mathbb{S}^2)$ for some $q \geq 1.$ For any restitution coefficient $e(\cdot)$ satisfying Assumptions \ref{HYP} and any real $p \geq 1$, one has
\begin{equation}\label{QepSp}
\IR \Q_{B,e}(f,f)(v)|v|^{2p}\d v\leq-(1-\kappa_{p})m_{p+1/2}+\kappa_{p}\;S_{p},
\end{equation}
where,
\begin{equation*}
S_{p}=\sum^{[\frac{p+1}{2}]}_{k=1}\left(
\begin{array}{c}
p\\k
\end{array}
\right)\left(m_{k+1/2}\;m_{p-k}+m_{k}\;m_{p-k+1/2}\right),
\end{equation*}
$[\frac{p+1}{2}]$ denoting the integer part of $\frac{p+1}{2}$ and $\kappa_p$ being the constant of Lemma \ref{gamm}. 
 \end{propo}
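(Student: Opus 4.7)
The plan is to start from the weak formulation \eqref{Ie3B} of the collision operator applied to the test-function $\psi(v)=|v|^{2p}$, namely
$$\int_{\mathbb{R}^3}\mathcal{Q}_{B,e}(f,f)(v)\,|v|^{2p}\,\d v=\frac{1}{2}\iint_{\mathbb{R}^3\times\mathbb{R}^3}f(v)f(\vb)\,\mathcal{A}_{B,e}[|\cdot|^p](v,\vb)\,\d v\,\d\vb,$$
and then plug in the Povzner-type bound of Lemma \ref{gamm}. Since $\Phi(|u|)=|u|$ for hard-spheres, this splits the integral into a negative ``loss-type'' contribution
$$I_1:=\iint f(v)f(\vb)\,|u|\,(|v|^{2p}+|\vb|^{2p})\,\d v\,\d\vb$$
weighted by $-(1-\kappa_p)/2$, plus a positive ``gain-type'' contribution
$$I_2:=\iint f(v)f(\vb)\,|u|\,\bigl[(|v|^2+|\vb|^2)^p-|v|^{2p}-|\vb|^{2p}\bigr]\,\d v\,\d\vb$$
weighted by $\kappa_p/2$. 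So everything reduces to bounding $I_1$ from below by $2m_{p+1/2}$ and $I_2$ from above by $2S_p$.

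For the lower bound on $I_1$, the key observation is that \eqref{firstmom} tells us that $f(\vb)\,\d\vb$ has zero mean. Jensen's inequality applied to the convex function $x\mapsto|v-x|$ therefore yields
$$\int_{\mathbb{R}^3}f(\vb)\,|v-\vb|\,\d\vb\;\geq\;\Bigl|v-\int_{\mathbb{R}^3}\vb\,f(\vb)\,\d\vb\Bigr|=|v|.$$
Exploiting the $v\leftrightarrow\vb$ symmetry of $I_1$ and integrating the resulting inequality against $f(v)|v|^{2p}\,\d v$ gives $I_1\geq 2\,m_{p+1/2}$. This is precisely the step that forces the zero-momentum normalization: any non-zero drift would produce a spurious $m_{1/2}\,m_p$ term that does not fit into the sum $S_p$ (which only starts at $k=1$).

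For the upper bound on $I_2$, the elementary triangle inequality gives $|u|\leq|v|+|\vb|$, and the classical binomial-type estimate of \cite[Lemma~1]{BoGaPa} (valid for arbitrary real $p\geq 1$, where the infinite binomial series is truncated at $K_p=[(p+1)/2]$) yields
$$(|v|^2+|\vb|^2)^p-|v|^{2p}-|\vb|^{2p}\;\leq\;\sum_{k=1}^{K_p}\binom{p}{k}\bigl(|v|^{2k}|\vb|^{2(p-k)}+|v|^{2(p-k)}|\vb|^{2k}\bigr).$$
Multiplying by $|v|+|\vb|$, expanding the products, and integrating against $f(v)f(\vb)$ produces four families of moment products per $k$, which pair up by the $v\leftrightarrow\vb$ symmetry to give exactly $2\bigl(m_{k+1/2}m_{p-k}+m_k\,m_{p-k+1/2}\bigr)$ and hence $I_2\leq 2S_p$. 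Combining this with the loss estimate completes the proof. The only subtle point is the appeal to the binomial-type inequality for non-integer $p$, which is why the sum is truncated at $[(p+1)/2]$ rather than $p-1$; all the rest is bookkeeping.
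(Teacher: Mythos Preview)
Your proof is correct and follows exactly the strategy the paper invokes: the paper does not write out a proof but simply observes that, once Lemma~\ref{gamm} is in hand, \cite[Lemma~3]{BoGaPa} (combined with \cite[Lemma~1]{BoGaPa}) applies verbatim. What you have done is reproduce that argument in detail---the Jensen lower bound on $I_1$ using the zero-mean assumption, and the binomial/triangle upper bound on $I_2$---so there is no difference in approach.
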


Inequality \eqref{QepSp} was introduced in \cite{BoGaPa} because the term $S_p$ involves only moments of order $p-1/2$.   Thus, the above estimate has important consequences on the propagation of moments for the solution to \eqref{cauch} (see Section \ref{sec:haff} for more discussion).

\section{Free cooling of granular gases: generalized Haff's law}\label{sec:haff}

We investigate in this section the so-called generalized \textit{Haff's law for  granular gases with variable restitution coefficient}. More precisely, we aim to derive the exact rate of decay of the temperature $\E(t)$ of the solution to Eq. \eqref{cauch}. In this section, we \textit{exclusively} study the generalized hard-spheres collision kernel.
$$
B(u,\sigma)=|u|b(\widehat{u}\cdot \sigma)
$$
where $b(\cdot)$ satisfies \eqref{normalization} but generalization to the so-called variable hard-spheres interactions (i.e. $\Phi(|u|)=|u|^s$ for $s \geq 0$) is easy to handle. Let $f_0$ be a nonnegative velocity distribution satisfying \eqref{initial} and let $f(t,v)$ be the associated solution to the Cauchy problem \eqref{cauch}. We denote its temperature by $\E(t)$,
$$
\E(t)=\IR f(t,v)|v|^2 \d v.
$$
The conditions \eqref{initial} implies that $\sup_{t\geq 0} \E(t)<\infty.$ Indeed, the evolution of $\E(t)$ is  governed by
\begin{multline*}\dfrac{\d }{\d t}\E(t)=\IR \Q_{B,e}(f,f)(t,v)|v|^2\d v=\frac{1}{2}\int_{\mathbb{R}^{3} \times \mathbb{R}^3}f(t,v)f(t,\vb)|u|\times \\
\times \int_{\mathbb{S}^{2}}\bigg(|v'|^2+|\vb'|^2-|v|^2-|\vb|^2\bigg)b(\widehat{u}\cdot \sigma)\d\sigma\;\d\vb\d v
\end{multline*}
where we applied \eqref{Ie3B} with $\psi(v)=|v|^2$.  One checks readily that
$$
|v'|^2+|\vb'|^2-|v|^2-|\vb|^2=-|u|^2\dfrac{1-\us}{4}\left(1- e^2\left(|u|\sqrt{\frac{1-\us}{2}}\right)\right),
$$
so that
\begin{multline*}
\dfrac{\d }{\d t}\E(t)=-\frac{1}{2}\IRR f(t,v)f(t,\vb)|u|^3\d v\d\vb  \\
\times \IS \dfrac{1-\us}{4}\left(1- e^2\left(|u|\sqrt{\frac{1-\us}{2}}\right)\right)b(\us)\d\sigma.
\end{multline*}
We compute this last integral over $\mathbb{S}^2$ (for fixed $v$ and $\vb$) using polar coordinates to get
\begin{multline*}|u|^3\IS \dfrac{1-\us}{8}\left(1- e^2\left(|u|\sqrt{\frac{1-\us}{2}}\right)\right)b(\us)\d\sigma=\\
2\pi|u|^3 \int_0^1 \left(1-e^2(|u|y)\right) b(1-2y^2) y^3\d y=\mathbf{\Psi}_e(|u|^2)\end{multline*}
where we have defined
\begin{equation}\label{Psie}
\mathbf{\Psi}_e(r):=2\pi r^{3/2}\int_0^{1} \left(1-e(\sqrt{r}z)^2\right)b\left(1-2z^2\right)z^3\d z, \qquad \qquad \forall r\geq0.
\end{equation}
In other words, the evolution of the temperature $\E(t)$ is given by
$$\dfrac{\d }{\d t}\E(t)=-\IRR f(t,v)f(t,\vb)\mathbf{\Psi}_e(|u|^2)\d v\d\vb\leq0, \qquad t \geq 0.$$
In addition to Assumptions \ref{HYP}, we assume in the rest of the paper that the restitution coefficient $e(\cdot)$ satisfies the following:
\begin{hyp}\label{HYP2} Assume that the mapping $r \mapsto e(r) \in (0,1]$ satisfies Assumptions \ref{HYP} and
\begin{enumerate}
\item there exist $\alpha >0$ and $\gamma \geq 0$ such that
$$e(r) \simeq 1-\alpha\, r^\gamma \quad \text{ for } \quad r \simeq 0,$$
\item $\liminf_{r \to \infty} e(r) =e_0 <1,$
\item $b(\cdot) \in L^q(\mathbb{S}^2)$ for some $q \geq 1$, and
\item the function $r>0 \longmapsto \mathbf{\Psi}_e(r)$ defined in \eqref{Psie} is strictly increasing and convex over $(0,+\infty)$.
\end{enumerate}
\end{hyp}
\begin{nb} For hard-spheres interactions, $b(\us)=\frac{1}{4\pi}$, thus, $\mathbf{\Psi}_e$ reduces to
$$\mathbf{\Psi}_e(r)=\dfrac{1}{2\sqrt{r}}\int_0^{\sqrt{r}} \left(1-e(y)^2\right)y^3\d y, \qquad \qquad r >0.$$
We prove in the Appendix that Assumptions \ref{HYP2} are satisfied for the viscoelastic hard-spheres of Example \ref{exa:visco} with  $\gamma=1/5$.  More generally, in the case of hard-spheres interactions, assumption \textit{(4)} is fulfilled if $e(\cdot)$ is continuously decreasing (see Lemma A. \ref{decreas} in Appendix A).  For constant restitution coefficient $e(r)=e_0$, these assumptions are trivially satisfied.
\end{nb}

\subsection{Upper bound for $\E(t)$} We first prove the first half of Haff's law, namely, the temperature $\E(t)$ has at least algebraic decay.
\begin{propo}\label{prop:cool} Let $f_0$ be a nonnegative velocity distribution satisfying \eqref{initial} and let $f(t,v)$ be the associated solution to the Cauchy problem \eqref{cauch} where the variable restitution coefficient satisfies Assumptions \ref{HYP2}. Then,
$$
\dfrac{\d }{\d t}\E(t) \leq -\mathbf{\Psi}_e(\E(t)) \qquad \qquad \forall t \geq 0.
$$
Moreover, there exists  $C >0$ such that
\begin{equation}\label{halfhaff}
\E(t) \leq C\left(1+t\right)^{-\frac{2}{1+\gamma}} \qquad \qquad \forall t \geq 0.
\end{equation}
\end{propo}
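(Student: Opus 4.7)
The plan is to combine the explicit expression derived for $\tfrac{\d}{\d t}\E(t)$ just above the statement with Jensen's inequality, and then to integrate the resulting differential inequality using the near-origin asymptotics of $\mathbf{\Psi}_e$.

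First I would exploit the conservation laws \eqref{firstmom}: since $\IR f(t,v)\,\d v=1$ and $\IR f(t,v)\,v\,\d v=0$, the product $f(t,v)f(t,\vb)\,\d v\,\d\vb$ is a probability measure on $\R^3\times\R^3$ and
\begin{equation*}
\IRR f(t,v)f(t,\vb)|u|^2\,\d v\,\d\vb \;=\; 2\E(t).
\end{equation*}
Because $\mathbf{\Psi}_e$ is convex by Assumption \textit{(4)} of \ref{HYP2}, Jensen's inequality gives
\begin{equation*}
\IRR f(t,v)f(t,\vb)\,\mathbf{\Psi}_e(|u|^2)\,\d v\,\d\vb \;\geq\; \mathbf{\Psi}_e\bigl(2\E(t)\bigr) \;\geq\; \mathbf{\Psi}_e(\E(t)),
\end{equation*}
where the second inequality uses monotonicity. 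Combined with the identity $\frac{\d}{\d t}\E(t)=-\IRR f(t,v)f(t,\vb)\mathbf{\Psi}_e(|u|^2)\,\d v\,\d\vb$ already displayed in the excerpt, this proves the differential inequality $\E'(t)\leq -\mathbf{\Psi}_e(\E(t))$.

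To convert this into the algebraic decay \eqref{halfhaff}, I would analyze $\mathbf{\Psi}_e(r)$ near $r=0$ by means of Assumption \textit{(1)} of \ref{HYP2}. Since $e(r)\simeq 1-\alpha r^\gamma$ near the origin, one has $1-e(\sqrt{r}z)^2\simeq 2\alpha r^{\gamma/2}z^\gamma$ uniformly in $z\in[0,1]$, and plugging this into the definition \eqref{Psie} gives
\begin{equation*}
\mathbf{\Psi}_e(r) \;\sim\; 4\pi\alpha\,r^{(3+\gamma)/2}\int_0^1 z^{\gamma+3}\,b(1-2z^2)\,\d z \qquad \text{as } r\to 0^+.
\end{equation*}
Since $\E(t)\leq \E(0)$ is uniformly bounded and $\mathbf{\Psi}_e$ is continuous, strictly increasing and vanishing only at the origin, this near-origin estimate upgrades to a global lower bound $\mathbf{\Psi}_e(r)\geq c_0\,r^{(3+\gamma)/2}$ valid for all $r\in[0,\E(0)]$, with some $c_0>0$ depending only on $e(\cdot)$ and $\E(0)$.

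Finally, substitution yields $\E'(t)\leq -c_0\,\E(t)^{(3+\gamma)/2}$, which I would integrate by separation of variables. Setting $q=(3+\gamma)/2$ so that $q-1=(1+\gamma)/2>0$, the inequality rewrites as
\begin{equation*}
\frac{\d}{\d t}\left(\E(t)^{-(1+\gamma)/2}\right) \;\geq\; c_0\,\frac{1+\gamma}{2},
\end{equation*}
whence $\E(t)^{-(1+\gamma)/2}\geq \E(0)^{-(1+\gamma)/2}+c_0(1+\gamma)t/2$, which is exactly the claimed bound $\E(t)\leq C(1+t)^{-2/(1+\gamma)}$. No major obstacle is expected in the argument; the only delicate point is upgrading the asymptotic of $\mathbf{\Psi}_e$ near zero into a lower bound on the full relevant range of $r$, for which the a priori bound $\E(t)\leq\E(0)$ and the strict positivity and continuity of $\mathbf{\Psi}_e$ away from zero are essential.
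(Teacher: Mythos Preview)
Your proof is correct and follows essentially the same approach as the paper: Jensen's inequality via the convexity of $\mathbf{\Psi}_e$, then integration of the resulting differential inequality using the near-origin asymptotics $\mathbf{\Psi}_e(r)\sim C_\gamma r^{(3+\gamma)/2}$. The only cosmetic differences are that the paper applies Jensen twice (first in $\vb$ using convexity of $v\mapsto\mathbf{\Psi}_e(|v|^2)$ together with the zero-mean condition, then in $v$) to reach $\mathbf{\Psi}_e(\E(t))$ directly, whereas you apply Jensen once on the product space to get the slightly sharper $\mathbf{\Psi}_e(2\E(t))$ before dropping to $\mathbf{\Psi}_e(\E(t))$; and the paper first argues $\E(t)\to 0$ and uses the asymptotic only for $t\geq t_0$, while you upgrade the asymptotic to a global lower bound on $[0,\E(0)]$ by compactness --- both routes are equally valid.
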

\begin{proof} Recall that the evolution of the temperature is given by
\begin{equation}\label{De}\dfrac{\d }{\d t}\E(t)=-\IRR f(t,v)f(t,\vb)\mathbf{\Psi}_e(|u|^2)\d v\d\vb, \qquad t \geq 0,\end{equation}
where $u=v-\vb.$ Since $\mathbf{\Psi}_e(|\cdot|^2)$ is convex according to Assumption \ref{HYP2} \textit{(2)} and $f(t,\vb)\d\vb$ is a probability measure over $\R^3$, Jensen's inequality implies
$$\IR f(t,\vb)\mathbf{\Psi}_e(|u|^2)\d\vb \geq \mathbf{\Psi}_e\left(\left|v-\IR \vb f(t,\vb)\d \vb\right|^2\right)=\mathbf{\Psi}_e(|v|^2) $$
where we used \eqref{firstmom}. Applying Jensen's inequality again we obtain
$$\IR f(t,v)\mathbf{\Psi}_e(|v|^2)\d v\geq \mathbf{\Psi}_e\left(\IR f(t,v)|v|^2\d v\right),$$
and therefore,
$$\dfrac{\d }{\d t}\E(t) \leq -\mathbf{\Psi}_e(\E(t)) \qquad \forall t \geq 0.$$
Note that $\mathbf{\Psi}_e(\cdot)$ is strictly increasing with $\lim_{x \to 0}\mathbf{\Psi}_e(x)=0$, this ensures that
$$\lim_{t \to \infty}\E(t)=0.$$
Moreover, according to Assumptions \ref{HYP2} \textit{(1)}, it is clear from \eqref{Psie} that
$$\mathbf{\Psi}_e(x) \simeq C_\gamma x^{\frac{3+\gamma}{2}} \quad \text{ for } \quad x \simeq 0,$$
where the constant can be taken as $C_\gamma=2\pi\alpha \int_0^1 y^{3+\gamma} b(1-2y^2)\d y<\infty.$  Since $\E(t) \to 0$, there exists $t_0 >0$ such that
$\mathbf{\Psi}_e(\E(t)) \geq \frac{1}{2}C_\gamma \E(t)^{\frac{3+\gamma}{2}}$ for all $t \geq t_0$
which implies that
$$\dfrac{\d }{\d t}\E(t) \leq -  \frac{C_\gamma}{2} \E(t)^{\frac{3+\gamma}{2}} \qquad \forall t \geq t_0.$$
This proves \eqref{halfhaff}
\end{proof}
\begin{exa} In the case of constant restitution coefficient $e(r)=e_0 \in (0,1)$ for any $r \geq 0$, for hard-spheres interactions, one has
$$\mathbf{\Psi}_e(x)=\dfrac{1-e^2_0}{8}x^{3/2}.$$
Thus, one recovers from \eqref{halfhaff} the decay of the temperature established from physical considerations (dimension analysis) in \cite{haff} and proved in \cite{MiMo2}, namely, $\E(t) \leq C(1+t)^{-2}$ for large $t$.
\end{exa}

\begin{exa} For the restitution coefficient $e(\cdot)$ associated to viscoelastic hard-spheres (see Example \ref{exa:visco}), one has $\gamma=1/5$, thus, the above estimate \eqref{halfhaff} leads to a decay of the temperature faster than $(1+t)^{-5/3}$ which is the one obtained in \cite{PoSc} (see also \cite{BrPo}) from physical considerations and dimensional analysis.\end{exa}

Notice that, since $\E(t) \to 0$ as $t \to \infty,$ it is possible to resume the arguments of \cite[Prop. 5.1]{MiMo} to  prove that the solution $f(t,v)$ to \eqref{cauch} converges to a Dirac mass as $t$ goes to infinity, namely
$$f(t,v) \underset{t \to \infty}{\longrightarrow} \delta_{v=0} \quad \text{ weakly $\ast$ in } \quad M^1(\R^3)$$
where $M^1(\R^3)$ denotes the space of normalized probability measures on $\R^3.$ We shall not investigate further on the question of long time asymptotic behavior of the distribution $f(t,v)$ but rather try to capture the very precise rate of convergence of the temperature to zero.\\

Using the Povzner-like estimate of Section \ref{sec:povzner} it is possible, from the decay in $\E(t)$, to deduce the decay of any moments of $f$.  Indeed, for any $t \geq 0$ and any $p \geq 1$ we define the \textit{p-moment} of $f$ as
\begin{equation}\label{defmp}
m_p(t):=\IR f(t,v)|v|^{2p}\d v.
\end{equation}
\begin{cor}\label{moments}
Let $f_0$ be a nonnegative velocity distribution satisfying \eqref{initial} and let $f(t,v)$ be the associated solution to the Cauchy problem \eqref{cauch} where the variable restitution coefficient satisfies Assumptions \ref{HYP2}. For any $p \geq 1$,  there exists $K_p >0$ such that
\begin{equation}\label{kp}
m_p(t) \leq K_p \left(1+t\right)^{-\frac{2p}{1+\gamma}} \qquad \forall t \geq 0.
\end{equation}
\end{cor}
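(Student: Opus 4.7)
The plan is to argue by induction on half-integer $p\geq 1$, combining Proposition \ref{povzner}, the interpolation inequality $m_p\leq m_q^{p/q}$ for $0\leq p\leq q$ (valid since $m_0=1$), and a scalar Gronwall-type comparison. The statement of the corollary for arbitrary real $p\geq 1$ then follows by log-convexity, interpolating between the two adjacent half-integers.

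The base case $p=1$ is precisely Proposition \ref{prop:cool}, while $p=1/2$ follows from Jensen's inequality: $m_{1/2}(t)\leq m_1(t)^{1/2}\leq K_1^{1/2}(1+t)^{-1/(1+\gamma)}$. For the inductive step at a half-integer $p\geq 3/2$, differentiate $m_p(t)$ in time and invoke Proposition \ref{povzner} to obtain
$$
\frac{\d}{\d t} m_p(t)\;\leq\;-(1-\kappa_p)\,m_{p+1/2}(t)+\kappa_p\,S_p(t).
$$
Every product appearing in $S_p$ has indices summing to $p+1/2$, and in all cases except one both factors are of order strictly less than $p$; hence the inductive hypothesis delivers
$$
S_p(t)\;\leq\;C_p\,(1+t)^{-(2p+1)/(1+\gamma)}.
$$
The sole boundary case is $p=3/2$, where $S_{3/2}$ contains $\tfrac{3}{2}\kappa_{3/2}m_{3/2}m_{1/2}$; since $m_{1/2}(t)\leq C(1+t)^{-1/(1+\gamma)}\to 0$, this produces only a lower-order linear perturbation that the nonlinear dissipation below absorbs. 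Combining with $m_{p+1/2}\geq m_p^{1+1/(2p)}$ (H\"older with $m_0=1$) yields the closed differential inequality
$$
\frac{\d}{\d t} m_p(t)\;\leq\;-(1-\kappa_p)\,m_p(t)^{1+1/(2p)}+C_p\,(1+t)^{-(2p+1)/(1+\gamma)}.
$$

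I would conclude by checking that $\phi(t):=K_p(1+t)^{-2p/(1+\gamma)}$ is a supersolution of this differential inequality for $K_p$ large. The key exponent identity
$$
\frac{2p}{1+\gamma}+1-\frac{2p+1}{1+\gamma}=\frac{\gamma}{1+\gamma}\;\geq\;0
$$
ensures that $|\phi'(t)|$ decays no slower than $(1+t)^{-(2p+1)/(1+\gamma)}$, which is exactly the rate of the source and of the dissipation $(1-\kappa_p)\phi^{1+1/(2p)}$. Since this dissipation carries the prefactor $K_p^{1+1/(2p)}$ while $|\phi'|$ and the source carry only linear and constant prefactors in $K_p$, choosing $K_p\gg 1$ makes $\phi$ a supersolution, and a standard scalar comparison principle yields $m_p(t)\leq \phi(t)$.

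The main obstacle is the initial condition for the comparison: one needs $m_p(0)\leq K_p$, whereas \eqref{initial} only provides $m_{3/2}(0)<+\infty$. For $p>3/2$, I would invoke the instantaneous moment-generation theorem for granular gases with hard-spheres-type collision kernels, originally established in \cite{BoGaPa} for constant restitution coefficients and extended in \cite{MMR,MiMo} to the variable-coefficient framework covered by Assumptions \ref{HYP2}: for every $q\geq 1$ and every $t_0>0$ one has $\sup_{t\geq t_0}m_q(t)<+\infty$. Running the comparison from any such $t_0$ and enlarging $K_p$ to absorb the finite value $m_p(t_0)$ on the compact interval $[0,t_0]$ produces \eqref{kp} on $[0,+\infty)$, completing the induction.
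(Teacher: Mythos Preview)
Your approach is essentially the same as the paper's: induction on half-integers via Proposition \ref{povzner}, the lower bound $m_{p+1/2}\geq m_p^{1+1/(2p)}$, and comparison with the algebraic supersolution $K_p(1+t)^{-2p/(1+\gamma)}$ (the paper phrases the comparison as showing $U_p(t)=m_p(t)-K u(t)^p$ cannot cross zero, and for $p=3/2$ uses $m_2\geq m_{3/2}^2/\E$ instead of your $m_2\geq m_{3/2}^{4/3}$, but these are cosmetic differences). Your final paragraph on moment generation for $p>3/2$ is a point the paper's proof simply glosses over by writing ``choose $K$ such that $U_p(0)<0$'' without justifying $m_p(0)<\infty$; your appeal to \cite{MMR} is the right fix.
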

\begin{proof} Set $u(t)=(1+t)^{-\frac{2}{1+\gamma}}$.  We prove that, for any $p \geq 1$, there exists $K_p >0$ such that $m_p(t) \leq K_p u^p(t)$ for any $t \geq 0$. Observe that using classical interpolation, it suffices to prove this for any $p$ such that $2p\in \mathbb{N}$. We argue by induction. It is clear from Proposition \ref{prop:cool} that estimate \eqref{kp} holds for $p=1$. Let $p>1$, with $2p \in \mathbb{N}$, be fixed and assume that for any integer $1 \leq j \leq p-1/2$ there exists $K_j >0$  such that $m_j(t) \leq K_j u^j(t)$ holds. According to Proposition \ref{povzner}
\begin{equation}\label{dmp}
\dfrac{\d }{\d t}m_p(t)=\IR \Q_{B,e}(f,f)(t,v)|v|^{2p}\d v\leq-(1-\kappa_{p})m_{p+1/2}(t)+\kappa_{p}\;S_{p}(t),
\end{equation}
where
\begin{equation*}
S_{p}(t)=\sum^{[\frac{p+1}{2}]}_{k=1}\left(
\begin{array}{c}
p\\k
\end{array}
\right)\left(m_{k+1/2}(t)\;m_{p-k}(t)+m_{k}(t)\;m_{p-k+1/2}(t)\right), \quad \forall t \geq 0.
\end{equation*}
For $p \geq 2$, the above expression $S_p(t)$ involves moments of order less than $p-1/2$.  The case $p=3/2$ is treated independently.\medskip

$\bullet$ \textit{Step 1 ($p=3/2$)}. In this case \eqref{dmp} reads
\begin{equation}\label{m3/2}
\dfrac{\d }{\d t}m_{3/2}(t) \leq -(1-\kappa_{3/2})m_{2}(t)+m_{3/2}(t)m_{1/2}(t)+\E^{2}(t) \qquad \forall t \geq 0.
\end{equation}
Let $K $ be a positive number to be chosen later and define
$$
U_{3/2}(t):=m_{3/2}(t)- Ku(t)^{3/2}.
$$
Using \eqref{m3/2} one has
\begin{multline*}
\dfrac{\d U_{3/2}}{\d t}(t) \leq -(1-\kappa_{3/2})m_{2}(t)+m_{3/2}(t)m_{1/2}(t)+\E^{2}(t)  +  \frac{3K}{1+\gamma}(1+t)^{-\frac{4+\gamma}{1+\gamma}}.
\end{multline*}
From Holder's inequality,
\begin{equation}\label{m3/2E}
m_{3/2}(t)\leq \sqrt{\E(t)} \sqrt{m_{2}(t)}  \quad \text{ and } \quad m_{1/2}(t)\leq \sqrt{\E(t)}  \qquad \forall t \geq 0
\end{equation}
hence,
\begin{equation*}
\dfrac{\d U_{3/2}}{\d t}(t) \leq  -(1-\kappa_{3/2}) \dfrac{m_{3/2}^2(t)}{\E(t)}
+\sqrt{\E(t)} m_{3/2}(t)+ \E^2 (t)+  \frac{3K}{ 1+\gamma }(1+t)^{-\frac{4+\gamma}{1+\gamma}}.
\end{equation*}
Since $\E(t) \leq C(1+t)^{-\frac{2}{1+\gamma}}$,  there exist $a ,b ,c >0$ such that
\begin{multline}\label{p3/2}\dfrac{\d U_{3/2}}{\d t}(t)\leq -a\, m_{3/2}^{2}(t)(1+t)^{\frac{2}{1+\gamma}} + b\,(1+t)^{-\frac{4}{1+\gamma}}\\
+  c\,(1+t)^{-\frac{1}{1+\gamma}}m_{3/2}(t)+\frac{3}{1+\gamma} K(1+t)^{-\frac{4+\gamma}{1+\gamma}}\qquad \forall t >0.\end{multline}
Inequality \eqref{p3/2} implies the result for the case $p=3/2$ provided $K$ is large enough.  Indeed, choose $K$ so that $m_{3/2}(0)<K u^{3/2}(0)=K$.  Then, by time-continuity of the moments, the result follows at least for some finite time.  Assume that there exists a time $t_{\star}>0$ such that $m_{3/2}(t_{\star})=K u^{3/2}(t_{\star})=K(1+t_\star)^{-\frac{3}{1+\gamma}}$, then \eqref{p3/2} implies
$$
\dfrac{\d U_{3/2}}{\d t}(t_\star)\leq \left(-aK^2+b+cK+\frac{3}{1+\gamma}K\right)(1+t_\star)^{-\frac{4}{1+\gamma}} <0
$$
whenever $K$ is large enough. Thus, \eqref{kp} holds for $p=3/2$ choosing $K_{3/2}:=K$.\medskip

$\bullet$ \textit{Step 2 ($p \geq 2$).}  The induction hypothesis implies that there exists a constant $C_p>0$ such that
$$
S_p(t) \leq C_p\, u(t)^{p+1/2}, \qquad \forall t \geq 0
$$
where $C_p$ can be taken as
$$
C_p=\sum^{[\frac{p+1}{2}]}_{k=1}\left(
\begin{array}{c}
p\\k
\end{array}
\right)\left(K_{k+1/2}\;K_{p-k}+K_{k}\;K_{p-k+1/2}\right).
$$
Furthermore, according to Jensen's inequality $m_{p+1/2}(t)\geq m_{p}^{1+1/2p}(t),$ for any $t \geq 0.$ Thus, from \eqref{dmp}, we conclude that
\begin{equation*}
\dfrac{\d }{\d t}m_p(t) \leq -(1-\kappa_{p})m_{p}^{1+1/2p}(t)+ \kappa_p\;C_p \,u(t)^{p+1/2} \qquad \forall t \geq 0.
\end{equation*}
Arguing as in \textit{Step 1}, for some $K >0 $ to be chosen later, we define
$$
U_p(t):=m_p(t)- Ku(t)^{p}.$$
In this way,
$$
\dfrac{\d }{\d t}U_p(t) \leq -(1-\kappa_{p})m_{p}^{1+1/2p}(t)+ \kappa_p\;C_p \,u(t)^{p+1/2} +\dfrac{2pK}{1+\gamma}(1+t)^{-\frac{2p+1}{1+\gamma}} \qquad \forall t \geq 0.
$$
Then, if $K$ is such that $U_p(0) < 0$, the result holds at least for some finite time. For any $t_\star >0$ such that $U_p(t_\star)=0$, one notices then that
\begin{equation*}
\dfrac{\d }{\d t}U_p(t_\star)  \leq \left(-(1-\kappa_p)K^{1+\frac{1}{2p}}+\kappa_p\,C_p+\dfrac{2pK}{1+\gamma}\right)\,(1+t_\star)^{-\frac{2p+1}{1+\gamma}}<0
\end{equation*}
provided $K$ is large enough. This proves \eqref{kp} for any $p \geq 1$.
\end{proof}

\subsection{Lower bound for $\E(t)$: preliminary considerations} The next goal is to complete the proof of Haff's law by showing that the cooling rate \eqref{halfhaff} is optimal under Assumptions \ref{HYP2}.  Thus, we have to show that there exists $C>0$ such that
$$
\E(t) \geq C(1+t)^{-\frac{2}{1+\gamma}}\qquad \forall t \geq 0.
$$
First, we prove the following result that simplifies our endeavor.
\begin{theo}\label{momlam}
Assume a non constant ($\gamma >0$) restitution coefficient $e(\cdot)$ satisfying Assumptions \ref{HYP2}. If there exist $C_0 >0$ and $\lambda >0$ such that
\begin{equation}\label{lam}
\E(t) \geq  C_0\,(1+t)^{-\lambda} \qquad \forall t \geq 0,
\end{equation}
then there exists $C_p >0$ such that
\begin{equation}\label{kp2}
m_{p}(t)\leq C_{p}\,\E^{p}(t)  \qquad \text{ for any } t \geq 0\ \ \mbox{and}\ \ p \geq 1.
\end{equation}
As a consequence, there exists $C >0$ such that
\begin{equation}\label{converseE}
\E(t) \geq  C\,(1+t)^{-\frac{2}{1+\gamma}}  \qquad \forall t \geq 0.
\end{equation}
\end{theo}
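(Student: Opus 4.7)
The theorem contains two distinct assertions: the self-similar moment bound \eqref{kp2}, and the optimal lower bound \eqref{converseE}. The plan is to prove \eqref{kp2} first by induction on half-integer $p$, then to deduce \eqref{converseE} in a few lines of ODE comparison.

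\textbf{Step 1: from \eqref{kp2} to \eqref{converseE}.} Using Assumption~\ref{HYP2}\textit{(1)} together with $0 \leq e \leq 1$, one checks directly from the definition \eqref{Psie} that $\mathbf{\Psi}_e(r) \leq \overline{C}\,r^{(3+\gamma)/2}$ for every $r \geq 0$. Plugging this into the formula $-\dot\E(t) = \IRR f(t,v)f(t,\vb)\mathbf{\Psi}_e(|u|^2)\d v\d\vb$, together with the pointwise bound $|u|^{3+\gamma} \leq 2^{2+\gamma}(|v|^{3+\gamma}+|\vb|^{3+\gamma})$ and the moment estimate \eqref{kp2} at $p = (3+\gamma)/2$, yields $|\dot\E(t)| \leq C\,\E(t)^{(3+\gamma)/2}$. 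The differential inequality $\frac{d}{dt}\E(t)^{-(1+\gamma)/2} \leq \frac{1+\gamma}{2}C$ integrates immediately to $\E(t) \geq C'(1+t)^{-2/(1+\gamma)}$.

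\textbf{Step 2: proof of \eqref{kp2}.} The case $p = 1$ is trivial since $m_1 = \E$. Fix a half-integer $p > 1$ and assume inductively that $m_r \leq C_r \E^r$ for every half-integer $r \in [1, p-1/2]$. Introduce $M_p(t) := m_p(t)/\E(t)^p \geq 1$ (Jensen). Combining the Povzner estimate of Proposition~\ref{povzner} with the H\"older interpolation $m_{p+1/2} \geq M_p^{(2p-1)/(2p-2)}\,\E^{p+1/2}$ (valid for $p > 1$), and the inductive bound $S_p \leq \widetilde C_p\,\E^{p+1/2}$ (each summand of $S_p$ factorises as a product of moments of total index $p+1/2$ and individual index $\leq p - 1/2$ as soon as $p \geq 2$), one obtains
$$
\dot M_p(t) \leq \E(t)^{1/2}\Big[-(1-\kappa_p)\, M_p(t)^{(2p-1)/(2p-2)} + \kappa_p \widetilde C_p\Big] + p\,M_p(t)\,\dfrac{|\dot\E(t)|}{\E(t)}.
$$
The coupling term is tamed via the crude bound $|\dot\E(t)| \leq C\,m_{3/2}(t)$, the \emph{a priori} decay $m_{3/2}(t) \leq K_{3/2}(1+t)^{-3/(1+\gamma)}$ supplied by Corollary~\ref{moments}, and the standing hypothesis $\E(t) \geq C_0(1+t)^{-\lambda}$. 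Because the H\"older exponent $(2p-1)/(2p-2)$ strictly exceeds $1$, the dissipative $M_p^{(2p-1)/(2p-2)}$ term dominates once $M_p$ is large enough, and a standard barrier argument then gives $M_p(t) \leq K_p$ uniformly in $t$: one picks $K_p$ so that $M_p(0) \leq K_p$ and the right-hand side above is strictly negative on the level set $\{M_p = K_p\}$.

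\textbf{Main obstacle.} The induction does not start cleanly at $p = 3/2$: the Povzner remainder $S_{3/2}$ of Proposition~\ref{povzner} still contains the moment $m_{3/2}$ itself (cf. \eqref{m3/2}), which the inductive hypothesis does not control, and the H\"older exponent collapses to $2$, the smallest allowed value. This base case must therefore be handled separately, working directly from the explicit estimate $\dot m_{3/2} \leq -(1-\kappa_{3/2})m_2 + m_{3/2}m_{1/2} + \E^2$ together with the Cauchy-Schwarz bounds $m_2 \geq m_{3/2}^2/\E$ and $m_{1/2} \leq \E^{1/2}$. At this level the coupling term $p M_p |\dot\E|/\E$ has the same scaling as the dissipation, and one must exploit the assumed lower bound $\E(t) \geq C_0(1+t)^{-\lambda}$ (combined with Corollary~\ref{moments}) to render $|\dot\E(t)|/\E(t)$ small enough, as a function of $t$, to close the barrier argument for $M_{3/2}$. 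Once $M_{3/2}$ is bounded, the general induction for $p \geq 2$ proceeds without further obstruction.
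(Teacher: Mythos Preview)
Your overall architecture matches the paper's: establish \eqref{kp2} by a barrier/induction argument on half-integer $p$, then deduce \eqref{converseE} by integrating $-\dot\E \leq C\,\E^{(3+\gamma)/2}$. Step~1 and the inductive step for $p\geq 2$ are essentially correct (for $p\geq 2$ the coupling term is actually handled more cleanly by invoking the already-proven bound $m_{3/2}\leq C_{3/2}\E^{3/2}$, which gives $|\dot\E|/\E \leq C\E^{1/2}$ directly, rather than going through Corollary~\ref{moments} and \eqref{lam}).

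The genuine gap is in the base case $p=3/2$. Your proposed fix---use $|\dot\E|\leq C m_{3/2}$, Corollary~\ref{moments}, and \eqref{lam} to make $|\dot\E|/\E$ ``small enough as a function of $t$''---does not close. Concretely, at a putative crossing time $t_\star$ with $M_{3/2}(t_\star)=K$ your inequality reads
\[
\dot M_{3/2}(t_\star)\leq \E(t_\star)^{1/2}\big[-(1-\kappa_{3/2})K^2+K+1\big]+\tfrac{3}{2}K\,\frac{|\dot\E(t_\star)|}{\E(t_\star)},
\]
and your bounds only give $|\dot\E|/\E\leq C'(1+t)^{\lambda-3/(1+\gamma)}$, while $\E^{1/2}\leq C(1+t)^{-1/(1+\gamma)}$. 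For $\lambda>2/(1+\gamma)$ the coupling term eventually dominates the dissipation for any fixed $K$, so no uniform barrier exists. The same obstruction arises if you use $|\dot\E|\leq C m_{3/2}=CK\E^{3/2}$ directly at $t_\star$: this puts a term $\tfrac{3C}{2}K^2\E^{1/2}$ on the right with a constant $C$ you do not control, and there is no reason for $\tfrac{3C}{2}<1-\kappa_{3/2}$.

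What the paper does instead is to first prove the refined estimate (cf.~\eqref{obs}): for every $\delta>0$ there exists $C_\delta>0$ with
\[
-\dot\E(t)\leq \delta\,m_{3/2}(t)+C_\delta\,\E(t)^{3/2}\qquad\forall t\geq 0.
\]
This is obtained by splitting $|u|^{3+\gamma}\leq \varepsilon^{\gamma/2}|u|^3+\varepsilon^{-(q-(3+\gamma)/2)}|u|^{2q}$, bounding the high-moment piece $m_q(t)$ via Corollary~\ref{moments}, and then converting $(1+t)^{-2q/(1+\gamma)}$ to $\E(t)^{3/2}$ via the hypothesis \eqref{lam} by taking $q$ large enough (this is exactly where \eqref{lam} enters). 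The hypothesis $\gamma>0$ is essential: it is what makes the coefficient $\varepsilon^{\gamma/2}$ of $m_{3/2}$ arbitrarily small. With \eqref{obs} in hand, the $\delta m_{3/2}$ part of the coupling is absorbed into the dissipation by choosing $\delta=(1-\kappa_{3/2})/3$, and the $C_\delta\E^{3/2}$ part only adds a linear-in-$K$ contribution at the crossing time, so the barrier closes for $K$ large. You should replace your vague treatment of the $p=3/2$ coupling by this splitting argument.
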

\begin{proof} According to Assumption \ref{HYP2} \textit{(1)}
$$\mathbf{\Psi}_e(x) \simeq C_\gamma x^{\frac{3+\gamma}{2}}\ \ \mbox{for}\ \ x \simeq 0.
$$
In addition, Assumption \ref{HYP2} \textit{(2)} implies that there exists $C_b >0$  such that
$$
\mathbf{\Psi}_e(x) \simeq C_b x^{3/2}\ \ \mbox{for large}\ x,
$$
where the constant can be taken as $C_b =2\pi(1-e^2_0)\int_0^1 b(1-2z^2)z^3\d z.$ Thus, there exists another constant $C >0$ such that
\begin{equation}\label{psieC}
\mathbf{\Psi}_e(x) \leq C x^{\frac{3+\gamma}{2}} \qquad \forall x >0.
\end{equation}
Then, from \eqref{De} one deduces that for any $\varepsilon >0$ and $p > \frac{3+\gamma}{2}$
\begin{equation*}
\begin{split}
-\dfrac{\d}{\d t}\E(t) &\leq C \left(\varepsilon^{\frac{\gamma}{2}}m_{3/2}(t)+\frac{1}{\varepsilon^{p-\frac{3+\gamma}{2}}}m_{p}(t)\right) \\
&\leq C\left(\varepsilon^{\frac{\gamma}{2}}m_{3/2}(t)+\frac{C_p}{\varepsilon^{p-\frac{3+\gamma}{2}}}(1+t)^{-\frac{2p}{1+\gamma}}\right) \qquad \forall t \geq 0.
\end{split}
\end{equation*}
where we have used Corollary \ref{moments} for the second inequality. In particular, using \eqref{lam} and the fact that $\E(t)$ is a non increasing function, one can choose $p$ sufficiently large so that
$$-\dfrac{\d}{\d t}\E(t)\leq C\left(\varepsilon^{\frac{\gamma}{2}}m_{3/2}(t)+\frac{\tilde{C}_p}{\varepsilon^{p-\frac{3+\gamma}{2}}}\E(t)^{\frac{3}{2}}\right)$$
for some positive constant $\tilde{C}_p$. In other words, for any $\delta >0$ there exists $C_\delta >0$ such that
\begin{equation}\label{obs}
-\dfrac{\d}{\d t}\E(t)\leq \delta m_{3/2}(t) + C_\delta \E(t)^{3/2} \qquad \forall t \geq 0.
\end{equation}
With this preliminary observation, the proof of \eqref{kp2} is a direct adaptation of that of Corollary \ref{moments}. Here again, by simple interpolation, it is enough to prove the result for any $p$ such that $2p \in \mathbb{N}$ and argue using induction. The result is clearly true for $p=1$ with $C_1=1$. For $p=3/2$, let $K >0$ be a constant chosen later and define
$$u_{3/2}(t)=m_{3/2}(t)-K\E(t)^{3/2}.$$
Thus, from \eqref{m3/2}
$$
\dfrac{\d }{\d t}u_{3/2}(t)\leq -(1-\kappa_{3/2})m_{2}(t)+m_{3/2}(t)m_{1/2}(t)+\E^{2}(t)-\frac{3}{2}K\sqrt{\E(t)}\frac{\d}{\d t}\E(t).
$$
Using \eqref{m3/2E} one deduces from \eqref{obs} that, for any $\delta >0$ there exists $C_\delta >0$ such that
\begin{multline*}
\dfrac{\d }{\d t}u_{3/2}(t)\leq -(1-\kappa_{3/2})\frac{m_{3/2}^2(t)}{\E(t)}  + \left(1+ \frac{3}{2}K\delta\right)m_{3/2}(t)\sqrt{\E(t)}
+\left(1+\frac{3}{2}K C_\delta\right)\E^2(t).
\end{multline*}
Fix $\delta=\frac{1-\kappa_{3/2}}{3}$ and choose $K >0$ such that $u_{3/2}(0) <0$. If $t_{\star}>0$ is such that $u_{3/2}(t_\star)=0$, then the following holds
$$\dfrac{\d }{\d t}u_{3/2}(t_{\star})\leq
\left(-\frac{1-\kappa_{3/2}}{2}K^{2}+ \left(K +1+\frac{3}{2}KC_\delta\right)\right)\E(t_{\star})^{2}<0,$$
provided $K$ is sufficiently large. This proves \eqref{kp2} for $p=3/2$ with $C_{3/2}:=K$.  The case $p\geq2$ follows in the same lines of the proof of Corollary \ref{moments} interchanging the roles of $\E(t)$ and $u(t)$.\medskip

To conclude the proof, observe that according to \eqref{psieC} and \eqref{De}, there exists $C >0$ such that
$$
-\frac{\d}{\d t}\E(t) \leq C m_{\frac{3+\gamma}{2}}(t)\qquad \forall t\geq 0.
$$
Then, applying \eqref{kp2} with $p=\frac{3+\gamma}{2}$, one deduces that there is $C_\gamma >0$ such that
$$
-\dfrac{\d}{\d t}\E(t) \leq C_\gamma \E(t)^{\frac{3+\gamma}{2}} \qquad \forall t \geq 0.
$$
A simple integration of this inequality yields \eqref{converseE}.
\end{proof}
\begin{nb} For constant restitution coefficient, $e=e_0$, since $\gamma=0$,  \eqref{obs} does not hold anymore.  However, for some $C_e >0$ we have
$$-\dfrac{\d}{\d t}\E(t) \leq C_e m_{3/2}(t) \qquad \forall t \geq 0.$$
Assuming that $e_0 \simeq 1$ (quasi-elastic regime) the constant $C_e$ is small, thus, the argument above can be reproduced to prove that the conclusion of Proposition \ref{momlam} still holds. Recall that for $\gamma=0$ the second part of Haff's law \eqref{converseE} has been proved in  \cite[Theorem 1.2]{MiMo2}.\end{nb}

In order to prove that \eqref{lam} is satisfied for some $\lambda >0$, we will need precise $L^p$ estimates, following the spirit of \cite{MiMo2}, for the rescaled function $g$ given in Section \ref{sec:Self}.  The idea to craft the correct time-scaling functions $\tau(\cdot)$ and $V(\cdot)$ is to choose them such that the corresponding temperature of $g$ is bounded away from zero.  Indeed, for any $\tau >0$, define
$$
\mathbf{\Theta}(\tau):=\IR g(\tau,w)|w|^2 \d w.
$$
Since,
\begin{equation}\label{Eg}
\E(t)=V(t)^{-2}\mathbf{\Theta}(\tau(t)) \qquad \forall t \geq 0,
\end{equation}
we choose
\begin{equation}\label{V(t)}
V(t)=(1+t)^{\frac{1}{\gamma+1}} \qquad \forall  t \geq 0.
\end{equation}
In this way, \eqref{converseE} is equivalent to $\mathbf{\Theta}(\tau(t)) \geq C$ for any $t \geq 0$. Notice that \eqref{halfhaff} immediately translates into
\begin{equation}\label{halfgtau}\sup_{t >0}\mathbf{\Theta}(\tau(t)) < \infty.\end{equation}
Moreover, for simplicity we pick $\tau(t)$ such that $\dot{\tau}(t)V(t)=1$, therefore for $\gamma>0$,
\begin{equation}\label{exptau}
\tau(t)=\int_0^t \dfrac{\d s}{V(s)}=\dfrac{\gamma+1}{\gamma}\left((1+t)^{\frac{\gamma}{1+\gamma}}-1\right)
\end{equation}
which is an acceptable time-scaling function.  Thus, the rescaled solution $g(\tau,w)$ satisfies \eqref{eqgt} with $\lambda(\tau)=1$,
\begin{equation}\label{xitau}
\xi(\tau)=\dfrac{1}{\gamma \tau +  (1+\gamma)} \quad \text{and} \quad  \widetilde{e}_\tau(r)=e\left(r\left(1+\frac{\gamma}{\gamma+1}\tau\right)^{-1/\gamma}\right).
\end{equation}
If $\gamma=0$ the restitution coefficient is constant \cite{MiMo2}, in particular $\widetilde{e}_\tau=e$, and the rescale reads $V(t)=1+t$ and $\tau(t)=\ln(1+t)$. In such a case, $\xi(\tau)\equiv1$.\medskip

To complete the proof of Haff's law,  one has to perform a careful study of the properties of the collision operator $\Q_e$ in Sobolev or $L^p$ spaces $1 < p \leq \infty.$

\section{Regularity properties of  the collision operator}

In this section the regularity properties studied originally for the elastic case in \cite{lions,MoVi,Wennberg} and later for the constant restitution coefficient in \cite{MiMo2} are generalized to cover variable restitution coefficients depending on the impact velocity.  The path that we follow closely follows \cite{MoVi}.

\subsection{Carleman representation}\label{sub:carle}
We establish here a technical representation of the gain term $\Q^+_{B,e}$ which is reminiscent of the classical Carleman representation in the elastic case.  More precisely, let $B(u,\sigma)$ be a collision kernel of the form
$$
B(u,\sigma)=\Phi(|u|)b(\widehat{u} \cdot \sigma)
$$
where $\Phi(\cdot) \geq 0$ and $b(\cdot) \geq 0$ satisfies \eqref{normalization}. For any $\psi=\psi(v)$, define the following linear operators
\begin{equation}\label{S_+-}
\mathcal{S}_\pm(\psi)(u)=\IS \psi(u^\pm)b(\widehat{u}\cdot \sigma)\d\sigma, \qquad \forall u \in \mathbb{R}^3,
\end{equation}
the symbolos $u{^-}$ and $u^{+}$ are defined by $$u^-:=\beta\left(|u\;|\sqrt{\frac{1-\widehat{u}\cdot\sigma}{2}}\right)\;\frac{u-|u|\;\sigma}{2}, \quad \text{ and } \quad u^+:=u-u^-.$$
\begin{lemme}\label{gammaB}
For any continuous functions $\psi$ and $\varphi$,
$$
\IR \varphi(u)\mathcal{S}_-(\psi)(u)\Phi(|u|) \d u=\IR \psi(x)\Gamma_B (\varphi) (x)\d x
$$
where the linear operator $\Gamma_B$ is given by
\begin{multline}\label{expgammaB}
\Gamma_B(\varphi)(x)=\int_{\omega^\perp} \mathcal{B}(z+\alpha(r)\omega,\alpha(r))\varphi(\alpha(r)\omega+z)\d\pi_z,\\
 \qquad x=r\omega, \:r\geq 0,\:\omega \in \mathbb{S}^2.
 \end{multline}
Here $\d\pi_{z}$ is the Lebesgue measure in the hyperplane $\omega^\perp$ perpendicular to $\omega$ and $\alpha(\cdot)$ is the inverse of the mapping $s \mapsto s\beta(s)$.  Moreover,
\begin{equation}\label{calB}
\mathcal{B}(z,\varrho)= \frac{8\Phi(|z|)}{|z|(\varrho \beta(\varrho))^2}b\left(1-2\frac{\varrho^2}{|z|^2}\right)\dfrac{\varrho}{1+\vartheta'(\varrho)}, \qquad \varrho \geq 0, \quad z \in \R^3
\end{equation}
with $\vartheta(\cdot)$ defined in Assumption \ref{HYP} (2) and $\vartheta'(\cdot)$ denoting its derivative.
\end{lemme}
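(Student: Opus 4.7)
The plan is to prove the identity by a sequence of explicit changes of variables that transform the $(u,\sigma)$-integral on the left into an integral against $\psi(x)$, with $x$ playing the role of $u^-$.

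First I would introduce the unit vector $n = (u - |u|\sigma)/|u - |u|\sigma|$ and observe that $u \cdot n = |u|\sqrt{(1-\widehat{u}\cdot\sigma)/2} \geq 0$, whence $u^- = \widetilde{\vartheta}(u \cdot n)\,n$ with $\widetilde{\vartheta}(s) := s\beta(s)$. Setting $\omega := n$, the polar decomposition of $u^-$ reads $u^- = r\omega$ with $r = \widetilde{\vartheta}(u \cdot \omega)$. Since $\widetilde{\vartheta}'(s) = (1 + \vartheta'(s))/2 \geq 1/2$ under Assumptions \ref{HYP}, the map $\widetilde{\vartheta}$ is a bijection of $[0,\infty)$ with well-defined inverse $\alpha$, and $u\cdot\omega = \alpha(r)$.

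Next, for each fixed $u$, I would change the integration variable from $\sigma \in \S$ to $\omega \in \S$ constrained by $\omega \cdot \widehat{u} \geq 0$. Parametrizing $\omega = \cos\phi\,\widehat{u} + \sin\phi\,\rho$ with $\phi \in [0,\pi/2]$ and $\rho$ a unit vector in $\widehat{u}^\perp$, a direct computation gives $\sigma = -\cos(2\phi)\widehat{u} - \sin(2\phi)\rho$, which yields the Jacobian identity $\d\sigma = 4(\widehat{u} \cdot \omega)\,\d\omega$ together with $\widehat{u} \cdot \sigma = 1 - 2(\widehat{u} \cdot \omega)^2$. Fubini then exchanges the $\d u$ and $\d\omega$ integrations; decomposing $u = s\omega + z$ with $s = u \cdot \omega \geq 0$ and $z \in \omega^\perp$ so that $\d u = \d s\,\d\pi_z$ reduces the problem to a one-dimensional substitution in $s$.

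Performing the substitution $s = \alpha(r)$, which gives $\d s = 2\,\d r/(1 + \vartheta'(\alpha(r)))$, turns $u^- = \widetilde{\vartheta}(s)\omega$ into the single variable $x := r\omega$. A final conversion $\d r\,\d\omega = \d x/|x|^2$ supplies the missing $1/r^2$ factor, and collecting all the accumulated Jacobians $4(\widehat{u} \cdot \omega) = 4\alpha(r)/|u|$, $2/(1+\vartheta'(\alpha(r)))$ and $1/r^2$ reproduces exactly the kernel $\mathcal{B}(u,\alpha(r))$ given in \eqref{calB}, proving the claimed formula for $\Gamma_B(\varphi)$.

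The main technical difficulty is the bookkeeping of these successive changes of variables: one must verify that the composite map $(u,\sigma) \mapsto (x,z)$ is a bijection onto its image, noting in particular that the range of $\sigma \mapsto n$ is exactly the half-sphere $\{\omega \cdot \widehat{u} \geq 0\}$, which matches the constraint $u \cdot \omega = \alpha(r) \geq 0$ on the $(x,z)$ side. The inelastic character of the problem enters only through the substitution $s \mapsto r = \widetilde{\vartheta}(s)$, whose Jacobian $1/(1+\vartheta'(\alpha(r)))$ accounts precisely for the extra factor appearing in $\mathcal{B}$ compared with the elastic Carleman representation.
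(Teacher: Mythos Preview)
Your argument is correct and arrives at the same formula, but the route differs from the paper's. The paper's proof invokes the Dirac--mass identity
\[
\int_{\mathbb{S}^2} F\!\left(\frac{u-|u|\sigma}{2}\right)\d\sigma
=\frac{4}{|u|}\int_{\R^3}\delta(|x|^2-x\cdot u)\,F(x)\,\d x,
\]
then substitutes $u=z+x$ so that the delta collapses the $z$--integral onto the hyperplane $x^\perp$, and finally passes to polar coordinates $x=\varrho\,\omega$ followed by the radial change $r=\varrho\,\beta(\varrho)$. You instead carry out an explicit spherical change of variable $\sigma\mapsto\omega=n$ (with Jacobian $4\,\widehat{u}\cdot\omega$), apply Fubini, decompose $u=s\omega+z$ orthogonally with respect to $\omega$, and only then make the radial substitution $s=\alpha(r)$. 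Your approach is somewhat more elementary in that every step is a genuine diffeomorphic change of variables and no distributional machinery is needed; the paper's route is closer to the classical elastic Carleman derivation and makes the parallel with \cite{MoVi} more transparent. In both arguments the inelasticity enters in exactly the same place --- the one--dimensional substitution $s\leftrightarrow r=s\beta(s)$ --- and the factor $(1+\vartheta'(\varrho))^{-1}$ you obtain matches the paper's, since $\alpha(r)\beta(\alpha(r))=r$ turns the $1/r^2$ coming from $\d r\,\d\omega=\d x/|x|^2$ into the $(\varrho\beta(\varrho))^{-2}$ appearing in $\mathcal{B}$.
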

\begin{proof} For simplicity assume that $\Phi\equiv 1$.  Define
$$I:=\IR \varphi(u)\mathcal{S}_-(\psi)(u)\d u=\IR \varphi(u)\d u \IS \psi(u^-)b(\widehat{u}\cdot \sigma)\d\sigma.
$$
For fixed $u \in \R^3$, we perform the integration over $\mathbb{S}^2$ using the formula
$$
\IS F\left(\dfrac{u-|u|\sigma}{2}\right)\d\sigma=\dfrac{4}{|u|}\IR \delta(|x|^2- x \cdot u ) F(x)\d x
$$
valid for any given function $F$. Then,
$$
I=4\IRR \varphi(u)|u|^{-1} \delta(|x|^2- x \cdot u )\psi\big(x\beta(|x|)\big)b\left(1-2\frac{|x|^2}{|u|^2}\right)\d x\d u.
$$
Setting now $u=z+x$ we get
$$I=4\IRR \varphi(x+z)|x+z|^{-1} \delta( x \cdot z )\psi\big(x\beta(|x|)\big)b\left(1-2\frac{|x|^2}{|x+z|^2}\right)\d z\d x.$$
Keeping $x$ fixed, we remove the Dirac mass using to the identity
$$\IR F(z)\delta(x \cdot z)\d z =\frac{1}{|x|} \int_{x^\perp} F(z)\d\pi_z,$$
which leads to
$$I=4\IR \psi\big(x\beta(|x|)\big)\frac{\d x}{|x|}\int_{x^\perp} \dfrac{\varphi(x+z)}{|x+z|}b\left(1-2\frac{|x|^2}{|x+z|^2}\right)\d \pi_z.$$
Perform the $x$--integral using polar coordinates $x=\varrho \,\omega$ and the change of variables $r=\varrho\, \beta(\varrho)$.  Recall that $\alpha(r)$ is the inverse of such mapping, furthermore, notice that $\d r=\frac{1}{2}(1+\vartheta'(\varrho))\d\varrho$.  This yields
\begin{multline*}
I=8\int_0^\infty \dfrac{\alpha(r)\d r}{1+\vartheta'(\alpha(r))}\IS \psi(r\omega)\d\omega\int_{\omega^\perp}\dfrac{\varphi(z+\alpha(r)\omega)}{|z+\alpha(r)\omega|}b\left(1-2\frac{\alpha(r)^2}{|z+\alpha(r)\omega|^2}\right)\d\pi_z.
\end{multline*}
Turning back to cartesian coordinates $x=r\omega$ we obtain the desired expression  $$I=\IR \psi(x)\Gamma_B(\varphi)(x)\d x,$$
with $\Gamma_B$ given by \eqref{expgammaB}.
\end{proof}
The above result leads to a Carleman-like expression for $\Q^+_{B,e}$:
\begin{cor}[\textbf{Carleman representation}]\label{carle} Let $e(\cdot)$ be a restitution coefficient satisfying Assumptions \ref{HYP} and let
$$
B(u,\sigma)=\Phi(|u|)b(\widehat{u} \cdot \sigma)
$$
be a collision kernel satisfying \eqref{normalization}. Then, for any velocity distribution functions $f,g$ one has
$$\Q_{B,e}^+(f,g)(v)=\IR f(z)\left[\left(t_z  \circ \Gamma_B \circ t_z\right)g\right] (v)\d z $$
where $[t_v \psi](x)=\psi(v-x)$ for any $v,x \in \R^3$ and test-function $\psi$.
\end{cor}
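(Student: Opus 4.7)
The identity should follow by a direct computation in which Lemma~\ref{gammaB} does all the analytic work. The essential observation is that the $\sigma$-parametrization \eqref{postsig} and the definition of $u^-$ preceding Lemma~\ref{gammaB} give $v' = v - u^-(u,\sigma)$ with $u = v - \vb$, so that for any fixed $z \in \R^3$ the $\sigma$-integration of the post-collisional contribution against a test function is exactly $\mathcal{S}_-$ applied to the translated function $t_z\varphi$. Combined with the substitution $\vb \mapsto u = z - \vb$, this reduces everything to Lemma~\ref{gammaB}.

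\textbf{Execution.} I would test $\Q^+_{B,e}(f,g)$ against an arbitrary continuous $\varphi$ and exploit the identity $v' = z - u^-(u,\sigma)$ when $v = z$, $u = z - \vb$, to write
\begin{equation*}
\int \Q^+_{B,e}(f,g)(v)\,\varphi(v)\,\d v
=\int\!\int f(z)\,g(\vb)\,\Phi(|u|)\,\mathcal{S}_-(t_z\varphi)(u)\,\d z\,\d\vb.
\end{equation*}
For each fixed $z$ I then change the inner variable $\vb \mapsto u = z - \vb$, so that $g(\vb) = (t_z g)(u)$ and the right-hand side becomes
\begin{equation*}
\int f(z)\,\d z \int (t_z g)(u)\,\mathcal{S}_-(t_z\varphi)(u)\,\Phi(|u|)\,\d u.
\end{equation*}
Now Lemma~\ref{gammaB}, applied for each $z$ with $t_z g$ in the role of the lemma's ``$\varphi$'' and $t_z\varphi$ in the role of its ``$\psi$'', turns the inner integral into $\int (t_z\varphi)(x)\,\Gamma_B(t_z g)(x)\,\d x$. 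A final change of variable $x = z - w$ identifies $(t_z\varphi)(x) = \varphi(w)$ and $\Gamma_B(t_z g)(x) = [t_z\,\Gamma_B(t_z g)](w)$; exchanging the order of integration via Tonelli (legitimate since $f,g \ge 0$) gives
\begin{equation*}
\int \Q^+_{B,e}(f,g)(v)\,\varphi(v)\,\d v
=\int \varphi(w)\left[\int f(z)\,\bigl[(t_z\circ\Gamma_B\circ t_z)g\bigr](w)\,\d z\right]\d w,
\end{equation*}
and the pointwise formula follows from the arbitrariness of $\varphi$.

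\textbf{Main obstacle.} There is no conceptual obstacle, because Lemma~\ref{gammaB} has already absorbed all the non-trivial analysis (the Dirac delta trick, the passage to a hyperplane measure, and the Jacobian $1 + \vartheta'$ coming from the variable restitution coefficient). The only point demanding care is the double occurrence of $t_z$ in the composition $t_z \circ \Gamma_B \circ t_z$: the inner $t_z$ acts on $g$ \emph{before} $\Gamma_B$ and is produced by the substitution $\vb \mapsto z - \vb$, while the outer $t_z$ acts on the resulting function $\Gamma_B(t_z g)$ and is produced by the later substitution $x \mapsto z - w$. Both translations share the same base point $z$, which is exactly what makes the Carleman form so compact. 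A minor technicality is that, since $\Gamma_B$ is defined through a hyperplane integral, the corollary is a pointwise identity in the a.e.\ sense for $f,g \in L^1_{\mathrm{loc}}$, which is all one needs later on.
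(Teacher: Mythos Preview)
Your proposal is correct and follows exactly the route the paper takes: pass to the weak formulation, recognize the inner $\sigma$-integral as $\mathcal{S}_-(t_z\varphi)$, change $\vb\mapsto u$, and invoke Lemma~\ref{gammaB} for each fixed $z$. The paper compresses all of this into the single identity \eqref{weaS-} (whose prefactor $\tfrac{1}{2}$ is evidently a typo, as both your computation and the later use of the same identity in the proof of Theorem~\ref{alo} confirm).
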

\begin{proof} The proof readily follows from the Lemma \ref{gammaB} and the identity
\begin{equation}\label{weaS-}
\IRR \Q_{B,e}^+(f,g)(v) \psi(v)\d v=\dfrac{1}{2}\IRR f(v)g(v-u)\Phi(|u|)\mathcal{S}_-(t_v \psi)(u)\d v\d u
\end{equation}
valid for any test-function $\psi$.\end{proof}

\subsection{Convolution-like estimates for $\Q_{B,e}^+$} General convolution-like estimates are obtained in \cite[Theorem 1]{AloCar} for non-constant restitution coefficient. Such estimates are given in $L^p_\eta$ with $\eta \geq 0$ and, for the applications we have in mind, we need to extend some of them to $\eta \leq 0$. This can be done using the method developed in \cite{MoVi} (see also \cite{GaPaVi}) together with the estimates of \cite{AloCar}.
\footnote{Notice that the constants ${\gamma}(\eta,p,b)$ and $\widetilde{\gamma}(\eta,p,b)$ given by \eqref{Cetab} and \eqref{Cetabtilde} are not finite for arbitrary angular kernel $b$.  It is implicitly assumed that the Theorem applies for the range of parameters leading to finite constants (see also Remark \ref{nbgamma}).}
\begin{theo} \label{alo}
Assume that the collision kernel $B(u,\sigma)=\Phi(|u|)b(\widehat{u} \cdot \sigma)$ satisfies \eqref{normalization} and  $\Phi(\cdot) \in L^\infty_{-k}$ for some $k \in \R$. In addition, assume that $e(\cdot)$ fulfills Assumption \ref{HYP}. Then, for any $1 \leq p \leq \infty$  and $\eta \in \R$, there exists $\mathbf{C}_{\eta,p,k}(B)>0$ such that
$$
\left\|\Q^+_{B,e}(f,g)\right\|_{L^p_\eta} \leq \mathbf{C}_{\eta,p,k}(B) \,\|f\|_{L^1_{|\eta +k|+|\eta|}} \,\|g\|_{L^p_{\eta +k}}
$$
where the constant $\mathbf{C}_{\eta,p,k}(B)$ is given by:
\begin{equation}\label{C_k(B)}
\mathbf{C}_{\eta,p,k}(B)=c_{k,\eta,p}\;\gamma(\eta,p,b)\left\|\Phi\right\|_{L^\infty_{-k}}
\end{equation}
with a constant $c_{k,\eta,p} >0$ depending only on $k,\eta$ and $p$.  Furthermore, the dependence on the angular kernel is given by
\begin{equation}\label{Cetab}
\gamma(\eta,p,b)=\int_{-1}^1 \left(\frac{1-s}{2}\right)^{-\frac{3+\eta_+}{2p'}}b(s)\d s,
\end{equation}
where $1/p+1/p'=1$ and $\eta_+$ is the positive part of $\eta$. Similarly, there exists $\widetilde{\mathbf{C}}_{\eta,p,k}(B)>0$ such that
$$
\left\|\Q^+_{B,e}(f,g)\right\|_{L^p_\eta} \leq \widetilde{\mathbf{C}}_{\eta,p,k}(B) \,\|g\|_{L^1_{|\eta +k|+|\eta|}} \,\|f\|_{L^p_{\eta +k}}
$$
where the constant $\widetilde{\mathbf{C}}_{\eta,p,k}(B)$ is given by
\begin{equation}\label{C_k(B)tilde}
\widetilde{\mathbf{C}}_{\eta,p,k}(B)=\widetilde{c}_{k,\eta,p} \;\widetilde{\gamma}(\eta,p,b)\left\|\Phi\right\|_{L^\infty_{-k}}
\end{equation}
for some constant $\widetilde{c}_{k,\eta,p} >0$ depending only on $k,\eta$ and $p$.  The dependence on the angular kernel is given by
\begin{equation}\label{Cetabtilde}
\widetilde{\gamma}(\eta,p,b)=\int_{-1}^1 \left(\frac{1+s}{2}+\left(1-\beta_0\right)^2\frac{1-s}{2}\right)^{-\frac{3+\eta_+}{2p'}}b(s)\d s
\end{equation}
where $1/p+1/p'=1$ and $\beta_0=\beta(0)=\frac{1+e(0)}{2}.$
\end{theo}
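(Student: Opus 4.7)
The plan is to combine the Carleman representation of Corollary \ref{carle} with Minkowski's integral inequality and a two-sided Peetre weight inequality, in the spirit of \cite{MoVi}, so as to extend the convolution-like estimates of \cite[Theorem 1]{AloCar}, originally valid only for $\eta \geq 0$, to arbitrary $\eta \in \R$. For the first inequality I would start from Corollary \ref{carle},
$$\Q^+_{B,e}(f,g)(v)=\int_{\R^3} f(z)\,\bigl[\Gamma_B\bigl(g(z-\cdot)\bigr)\bigr](z-v)\,\d z,$$
and apply Minkowski in $z$ so as to pull the $L^1$-norm of $f$ outside of the $L^p_\eta$-norm in $v$. A change of variables $w = z - v$ inside the inner integral, combined with the two-sided Peetre inequality
$$\langle z - w \rangle^\eta \leq C_\eta\,\langle z \rangle^{|\eta|}\langle w \rangle^\eta,$$
valid for any sign of $\eta$, transfers the weight $\langle v \rangle^\eta$ into a factor $\langle z \rangle^{|\eta|}$ on $f$ times $\|\Gamma_B(g(z-\cdot))\|_{L^p_\eta}$ on what remains. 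This is the key trick that makes the case $\eta < 0$ work at all.

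The remaining weighted norm is estimated by unpacking the explicit formula \eqref{expgammaB}: the kernel $\mathcal B(\cdot,\alpha(r))$ contains the factor $\Phi(|\cdot|)$, which thanks to $\Phi \in L^\infty_{-k}$ contributes $\|\Phi\|_{L^\infty_{-k}}$ at the cost of a weight shift of order $k$; the angular piece $b(1-2\varrho^2/|\cdot|^2)$, after a H\"older step in $L^{p'}$ on the hyperplane $\omega^\perp$ and the radial change of variables $\varrho = \alpha(r)$, yields exactly $\gamma(\eta,p,b)$ as in \eqref{Cetab}, the exponent $(3+\eta_+)/(2p')$ arising from the two-dimensional plane integration together with the positive-part weight $\langle w \rangle^{\eta_+}$ required to retain outer $L^p_\eta$-integrability. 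A final Peetre step then transfers the weight $\eta + k$ from the translated function $g(z-\cdot)$ back to $g$ itself, at the price of an extra $\langle z \rangle^{|\eta+k|}$ on $f$, producing the announced $L^1_{|\eta+k|+|\eta|}$-norm.

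For the companion estimate, where $f$ and $g$ are interchanged, I would derive an analogous representation built from $\mathcal S_+$ rather than $\mathcal S_-$ in \eqref{S_+-}; this amounts to reading the gain operator against the second post-collisional velocity $\vb' = v - u^+$ instead of $v' = v - u^-$. The substitution $u \mapsto u^+$ is no longer the elastic involution $\sigma \mapsto -\sigma$, and computing its Jacobian in the parametrisation $s = \widehat u \cdot \sigma$ yields a factor of the form $(1+s)/2 + (1-\beta_0)^2(1-s)/2$ in place of $(1-s)/2$; this is precisely what enters $\widetilde\gamma(\eta,p,b)$ in \eqref{Cetabtilde}. The main obstacle is this Jacobian computation: one must show that the dependence on $e(\cdot)$ enters only through $\beta_0 = (1+e(0))/2$, not through $e(\cdot)$ at higher impact velocities, so that in the self-similar rescaling of Section \ref{sec:Self} the constants \eqref{C_k(B)}--\eqref{C_k(B)tilde} remain controlled uniformly in the rescaled time $\tau$; this uniformity is essential for the $L^p$-estimates of Section 5. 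Once this is in place, the Minkowski-plus-Peetre argument of the first part applies verbatim.
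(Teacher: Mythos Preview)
Your route and the paper's are \emph{dual} to one another, but the paper's is both shorter and avoids the step that is vague in your plan. The paper does not go through the Carleman operator $\Gamma_B$ at all: it argues by duality, writing
\[
\left\|\Q^+_{B,e}(f,g)\right\|_{L^p_\eta}=\sup_{\|\psi\|_{L^{p'}_{-\eta}}\le 1}\left|\int \Q^+_{B,e}(f,g)\,\psi\right|
\]
and then uses the weak form \eqref{weaS-} with $\mathcal T_-=\Phi\,\mathcal S_-$. The crucial analytic input is the bound $\|\mathcal S_-(h)\|_{L^{p'}_{-\eta}}\le \gamma(\eta,p,b)\,\|h\|_{L^{p'}_{-\eta}}$, which is simply \emph{quoted} from \cite[Theorem~5]{AloCar}; no hyperplane integration is redone. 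The weight management is then exactly your Peetre trick, phrased as $\|t_v h\|_{L^{p'}_s}\le 2^{|s|/2}\langle v\rangle^{|s|}\|h\|_{L^{p'}_s}$, applied twice (once with $s=-\eta-k$, once with $s=-\eta$), which is what produces the $L^1_{|\eta+k|+|\eta|}$-weight on $f$. For the companion estimate one repeats verbatim with $\mathcal S_+$ in place of $\mathcal S_-$, again citing \cite[Theorem~5]{AloCar}.

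Your plan can be made to work, since by Lemma~\ref{gammaB} the operator $\Gamma_B$ is the $L^p$--$L^{p'}$ adjoint of $\Phi\,\mathcal S_-$, so the $L^p_{\eta+k}\to L^p_\eta$ bound on $\Gamma_B$ is \emph{equivalent} to the $\mathcal S_-$ bound you would eventually need. But your description of how to obtain it directly---``a H\"older step in $L^{p'}$ on the hyperplane $\omega^\perp$''---is the weak point: $\Gamma_B$ is a weighted Radon-type transform (it integrates over $2$-planes), and a plain H\"older inequality on the plane does not by itself yield an $L^p\to L^p$ mapping bound. You would in effect be re-deriving \cite[Theorem~5]{AloCar} through its adjoint, which is more work than simply citing it as the paper does. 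Also, for the second estimate, the factor $\tfrac{1+s}{2}+(1-\beta_0)^2\tfrac{1-s}{2}$ is not the Jacobian of $u\mapsto u^+$ but rather the ratio $|u^+|^2/|u|^2$ evaluated at the extremal value $\beta_0=\sup_r\beta(r)=\beta(0)$; the dependence on $e(\cdot)$ collapses to $\beta_0$ because $\beta(r)\le\beta_0$ gives a \emph{lower} bound on $|u^+|/|u|$, hence an \emph{upper} bound after raising to the negative power $-\tfrac{3+\eta_+}{2p'}$.
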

\begin{proof} Fix $1\leq p \leq \infty$ and $\eta \in \R$ and use the convention $1/p'+1/p=1$. By duality,
 \begin{equation*}
 \left\| \Q^+_{B,e}(f,g)\right\|_{L^p _\eta} =
 \sup \left\{\left| \IR\Q^+_{B,e}(f,g)(v)\psi(v)\d v \right|\ \ ; \ \left\|\psi\right\|_{L^{p'} _{-\eta}} \leq 1 \right\}.
 \end{equation*}
Using \eqref{weaS-},
\begin{equation*}
\IR \Q^+_{B,e}(f,g)(v)\psi(v)\d v=\IRR f(v)g(v-u)\mathcal{T}_-(t_v \psi)(u)\d v \d u \end{equation*}
with
$$
\mathcal{T}_-(\psi)(u)=\Phi(|u|)\mathcal{S}_-(\psi)(u),\ \ \mbox{and} \ \ \ t_v \psi(x)=\psi(v-x),
$$
with $\mathcal{S}_-$ defined in equation \eqref{S_+-}.  With the notation of \cite{AloCar}, one recognizes that $\mathcal{S}_-(h)=\mathcal{P}(h,1)$, thus, applying \cite[Theorem 5]{AloCar} with $q=\infty$ and $\alpha=-\eta$,
$$
\|\mathcal{S}_-(h)\|_{L^{p'}_{-\eta}} \leq \gamma(\eta,p,b)\|h\|_{L^{p'}_{-\eta}}
$$
with $\gamma(\eta,p,b)$ given by \eqref{Cetab}.  Notice that, with respect to \cite{AloCar}, we used the weight $\langle v \rangle^\eta$ instead of $|v|^\eta$, this is the reason to introduce $\eta_+$ in our definition of $\gamma(\eta,p,b)$. As a consequence,
\begin{equation}\label{matT}\|\mathcal{T}_-(h)\|_{L^{p'}_{-\eta-k}} \leq \gamma(\eta,p,b)\|\Phi\|_{L^\infty_{-k}} \|h\|_{L^{p'}_{-\eta}}.\end{equation}
Now,
\begin{equation*}\begin{split}
\left| \IR \Q^+_{B,e}(f,g)\psi\d v \right| &\leq \IR |f(v)|\d v \left(\IR |g(u)| \left[(t_v \circ \mathcal{T}_- \circ t_v)\psi\right](u)\d u\right)\\
&\leq \|g\|_{L^p_{\eta+k}} \IR |f(v)| \left\|(t_v \circ \mathcal{T}_- \circ t_v)\psi\right\|_{L^{p'}_{-k-\eta}}\d v.\end{split}
\end{equation*}
Using the inequality $\|t_v h\|_{L^{p'}_s} \leq 2^{|s|/2} \langle v \rangle^{|s|} \|h\|_{L^{p'}_s}$ for any $s \in \R$ and $v$,
\begin{multline*}
\left| \IR \Q^+_{B,e}(f,g)\psi\d v \right|  \leq 2^{|\eta+k|/2}\|g\|_{L^p_{\eta+k}} \IR |f(v)|\langle v \rangle^{|\eta+k|}\left\|( \mathcal{T}_- \circ t_v)\psi\right\|_{L^{p'}_{-k-\eta}}\d v\\
 \leq 2^{|\eta+k|/2}\gamma(\eta,p,b)\|\Phi\|_{L^\infty_{-k}}\|g\|_{L^p_{\eta+k}} \IR |f(v)|\langle v \rangle^{|\eta+k|}\left\|  t_v \psi\right\|_{L^{p'}_{ -\eta}}\d v\\
 \leq 2^{|\eta+k|+|\eta|/2}\gamma(\eta,p,b)\|\Phi\|_{L^\infty_{-k}}\|g\|_{L^p_{\eta+k}} \IR |f(v)|\langle v \rangle^{|\eta+k|+|\eta|}\left\|\psi\right\|_{L^{p'}_{ -\eta}}\d v\end{multline*}
which proves the first part of the Theorem. To prove the second part, observe that
\begin{equation*}
\IR \Q^+_{B,e}(f,g)(v)\psi(v)\d v=\IRR f(v-u)g(v)\mathcal{T}_+(t_v \psi)(u)\d v \d u, \end{equation*}
where $\mathcal{T}_+(\psi)(u)=\Phi(|u|)\mathcal{S}_+(\psi)(u)$ and $\mathcal{S}_+$ defined in \eqref{S_+-}. Using the notation of \cite{AloCar} we identify $\mathcal{S}_+(h)=\mathcal{P}(1,h)$.  Thus, applying \cite[Theorem 5]{AloCar} with $p=\infty$ and $\alpha=-\eta$,
$$
\|\mathcal{S}_-(h)\|_{L^{p'}_{-\eta}} \leq \widetilde{\gamma}(\eta,p,b)\|h\|_{L^{p'}_{-\eta}}
$$
where $\widetilde{\gamma}(\eta,p,b)$ given by \eqref{Cetabtilde}. One concludes as above, interchanging the roles of $f$ and $g$.
\end{proof}
\begin{nb}\label{nbgamma} The constants $\gamma(\eta,p,b)$ and $\widetilde{\gamma}(\eta,p,b)$ are not finite for arbitrary $b(\cdot)$ because of the possible singularity at $s=\pm1$. However, if one assumes, as in \cite{MiMo2}, that the angular kernel $b(\cdot)$ vanishes in the vicinity of $s=1$ then $\gamma(\eta,p,b) < \infty$ for any $1 \leq p \leq \infty$ and $\eta \in \R$.  This is an additional difficulty of the inelastic regime that is overcome in the elastic case using symmetry, i.e., defining $b$ in half the domain.  The careful reader will also notice that the constants given in the theorem are independent of $e(\cdot)$ except for $\tilde{\gamma}(\eta,p,b)$ which depends only on the value $e(0)$. Finally, we mention that, for hard-sphere interactions, i.e. $b\equiv \frac{1}{4\pi}$, one has $\gamma(\eta,p,b) < \infty \:\:\Longleftrightarrow\:\: \widetilde{\gamma}(\eta,p,b) < \infty \:\:\Longleftrightarrow\:\: 1 \leq p < \frac{3+\eta_+}{1+\eta_+}.$
\end{nb}
\subsection{Sobolev regularity for smooth collision kernel.}
For this section we assume $\Phi(\cdot)$ and $b(\cdot)$ smooth and compactly supported
\begin{equation}\label{smoothphi}
\Phi \in \mathcal{C}_0^\infty (\mathbb{R}^3 \setminus \{0\} ), \qquad b \in \mathcal{C}_0^\infty(-1,1).
\end{equation}
Denote by $\Q_{B,e}$ the associated collision operator defined by \eqref{Ie3B}.
\begin{lemme}\label{Sob}
Assume that $e(\cdot)$ satisfies Assumptions \ref{HYP} with $e(\cdot) \in \mathcal{C}^m(0,\infty)$ for some integer $m \in \mathbb{N}.$    Then, under assumption \eqref{smoothphi} on the collision kernel, for any $0 \leq s \leq m$, there exists $C=C(s,B,e)$ such that
$$
\left\|\Gamma_B(f)\right\|_{H^{s+1}} \leq C(s,B,e)\,\left\|f\right\|_{H^s},\qquad \forall f \in H^s
$$
where $\Gamma_B$ is the operator defined in Lemma \ref{gammaB}.  The constant $C(s,B,e)$ depends only on $s$, on the collision kernel $B$ and the restitution coefficient $e(\cdot)$.  More precisely, $C(s,B,e)$ depends on $e(\cdot)$ through the $L^\infty$ norm  of the derivatives $D^k e(\cdot)$ ($k=1,\ldots,m$) over some compact interval bounded away from zero depending only on $B$.
\end{lemme}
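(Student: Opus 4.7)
The plan is to identify $\Gamma_B$ as a smoothly weighted Radon transform evaluated after a smooth radial change of variable, and then invoke the classical Sobolev smoothing estimate for the Radon transform on $\R^3$, which gains exactly one derivative. First I would recast the defining formula \eqref{expgammaB}: for $x=r\omega$, the substitution $y=\alpha(r)\omega+z$ (with $z\in\omega^\perp$) gives
$$
\Gamma_B(\varphi)(r\omega)=\int_{\omega\cdot y=\alpha(r)} \B(y,\alpha(r))\,\varphi(y)\d\sigma(y) \;=:\; \mathcal{R}^w[\varphi](\omega,\alpha(r)),
$$
where $\mathcal{R}^w$ is the Radon transform weighted by $\B(\cdot,s)$. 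Next I would verify the necessary regularity of the ingredients: Assumption \ref{HYP}(2) combined with the inverse function theorem gives $\alpha\in\mathcal{C}^m((0,\infty))$ whenever $e\in\mathcal{C}^m$ (since $\vartheta'>0$ never vanishes), and assumption \eqref{smoothphi} together with the explicit formula \eqref{calB} makes $\B(y,\varrho)$ jointly $\mathcal{C}^m$ and compactly supported in a fixed set of the form $\{R_1\le|y|\le R_2\}\times[a,b]\subset (\R^3\setminus\{0\})\times(0,\infty)$. In particular $\Gamma_B(\varphi)$ is supported in a fixed compact annulus bounded away from the origin, so polar-to-Cartesian Sobolev norms are equivalent on its support and the problem reduces to estimating $\|\mathcal{R}^w\varphi(\omega,\alpha(r))\|_{H^{s+1}}$ in the variables $(\omega,r)$.

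For the core $s=0$ estimate I would apply the Fourier slice identity $(\mathcal{R}g)^{\wedge}_s(\omega,\tau)=\widehat{g}(\tau\omega)$ to $g=\B(\cdot,s)\varphi$: passing to polar coordinates in the target Fourier variable yields the one-derivative gain
$$
\int_{\S}\int_\R |\tau|^2\,\bigl|(\mathcal{R}^w\varphi)^{\wedge}_s(\omega,\tau)\bigr|^2\d\tau\d\omega \;\le\; C(B)\,\|\varphi\|_{L^2}^2,
$$
which, post-composed with the $\mathcal{C}^1$-diffeomorphism $r\mapsto\alpha(r)$, controls the radial part of $\|\Gamma_B(\varphi)\|_{H^1}$. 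The angular derivatives of $\mathcal{R}^w\varphi(\omega,s)$ do not benefit directly from Fourier slice, but differentiating the delta-function representation $\mathcal{R}^w\varphi(\omega,s)=\IR \B(y,s)\varphi(y)\,\delta(\omega\cdot y-s)\d y$ and using $\partial_s\delta(\omega\cdot y-s)=-\delta'(\omega\cdot y-s)$ shows that every tangential $\partial_\omega$ can be rewritten as a combination of a new weighted Radon transform (with weight $(\tau\cdot y)\partial_s\B$) and an $s$-derivative of another weighted Radon transform (with weight $(\tau\cdot y)\B$); boundary contributions vanish thanks to the compact support of $b$ and $\Phi$. Each resulting $\partial_s$ is handled by Fourier slice, yielding the full $H^1$-bound. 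The cases $1\le s\le m$ are then obtained by iteration: each additional derivative either lands on $\varphi$ (absorbed in $\|\varphi\|_{H^s}$), on $\B$ (contributing $\mathcal{C}^s$-norms of $e$ on a compact interval via the chain rule through $\alpha$ and $\vartheta'$), or produces one more $\partial_s$ governed by Fourier slice — which explains both the restriction $s\le m$ and the stated dependence of $C(s,B,e)$ on the derivatives of $e$.

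The main obstacle is the treatment of the angular derivatives of $\mathcal{R}^w\varphi(\omega,s)$: these do not gain regularity from Fourier slice, and the argument crucially rests on converting tangential $\partial_\omega$ into either a $\partial_s$ (handled by slice) or a derivative on the smooth weight $\B$ via integration by parts on the moving hyperplane $\{\omega\cdot y=s\}$. That conversion is possible only thanks to the compact support of $\Phi$ and $b$ (ensuring vanishing boundary terms) and the $\mathcal{C}^m$-regularity of $e$ (providing enough derivatives of $\B$), and it is precisely what dictates the form of the constant $C(s,B,e)$ announced in the statement.
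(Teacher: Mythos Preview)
Your strategy is sound and is essentially a from-scratch rederivation of the weighted-Radon smoothing that underlies \cite[Theorem~3.1]{MoVi}, but the paper proceeds quite differently. Rather than attacking $\Gamma_B$ directly, the paper first precomposes with the inverse of your radial diffeomorphism, setting $\widetilde{\Gamma_B}(f)(r,\omega):=\Gamma_B(f)(r\beta(r),\omega)$, and then observes that the resulting kernel factors as $\mathcal{B}(z+r\omega,r)=G_e(r)\,\mathcal{B}_0(z+r\omega,r)$, where $G_e(r)=r/[(1+\vartheta'(r))\beta^2(r)]$ absorbs \emph{all} of the dependence on $e(\cdot)$ and $\mathcal{B}_0$ is exactly the elastic kernel. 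Thus $\widetilde{\Gamma_B}(f)=G_e\chi_I\cdot\widetilde{\Gamma_0}(f)$; the $H^s\to H^{s+1}$ gain for $\widetilde{\Gamma_0}$ is then quoted verbatim from \cite[Theorem~3.1]{MoVi}, multiplication by the scalar $\mathcal{C}^m$ function $G_e$ on the compact interval $I$ is harmless, and a final change back to $\Gamma_B$ costs only the Jacobian factors $F_j$ of the form \eqref{polynome}. What this modular reduction buys is an explicitly factored constant $C(s,B,e)=C_0(s,\Phi,b)\cdot(\text{norms of }G_e,F_j\text{ on }I)$, which is precisely the structure exploited later in Theorem~\ref{lpes} and Corollary~\ref{lpestau} to obtain bounds uniform in the rescaled time $\tau$.

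Your direct route should also reach the conclusion, but the Fourier-slice step as you wrote it is not correct as stated: since the weight $\mathcal{B}(y,s)$ depends on the offset $s$, the $s$-Fourier transform of $\mathcal{R}^w\varphi(\omega,s)$ is $\int e^{-i\tau\omega\cdot y}\mathcal{B}(y,\omega\cdot y)\varphi(y)\d y$, not $\widehat{\mathcal{B}(\cdot,s)\varphi}(\tau\omega)$. The multiplier thus acquires an $\omega$-dependence, and after the polar change $\xi=\tau\omega$ one is left with $\int_{\R^3}\bigl|\widehat{m_{\hat\xi}\varphi}(\xi)\bigr|^2\d\xi$ with $m_\omega(y)=\mathcal{B}(y,\omega\cdot y)$, which plain Plancherel does not bound by $\|\varphi\|_{L^2}^2$. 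This is repairable (it is exactly the analysis carried out in \cite{MoVi,Wennberg}, which is why the paper simply cites those references), but it is not a one-line application of Fourier slice. Note that the paper's factorization does not evade this difficulty---the $s$-dependence persists in $\mathcal{B}_0$---it simply delegates it to the elastic literature while cleanly isolating the $e$-dependence, which is the new content here.
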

We postpone the proof of Lemma \ref{Sob} and first prove its important consequence.
\begin{theo}\label{theo:smooth}
Let $B(u,\sigma)=\Phi(|u|)b(\widehat{u} \cdot \sigma)$ be a collision kernel satisfying \eqref{smoothphi} and $e(\cdot)$ satisfying Assumption \ref{HYP}.  In addition, assume that $e(\cdot) \in \mathcal{C}^m(0,\infty)$ for some integer $m \in \mathbb{N}$.  Then, for any $0 \leq s \leq m$,
$$
\left\|\Q_{B,e}^+(f,g)\right\|_{H^{s+1}} \leq C(s,B,e)\,\|g\|_{H^s}\,\|f\|_{L^1}
$$
with constant $C(s,B,e)$ given in Lemma \ref{Sob}.
\end{theo}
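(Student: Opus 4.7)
The strategy is to apply the Carleman representation of Corollary~\ref{carle} and then reduce the $H^{s+1}$ estimate on $\Q_{B,e}^+$ directly to the estimate on $\Gamma_B$ provided by Lemma~\ref{Sob}. Explicitly, for fixed $f,g$ the representation
\begin{equation*}
\Q_{B,e}^+(f,g)(v) = \int_{\R^3} f(z) \bigl[(t_z \circ \Gamma_B \circ t_z) g\bigr](v) \d z
\end{equation*}
exhibits $\Q_{B,e}^+(f,g)$ as a weighted superposition, indexed by $z$, of functions of $v$. The plan is to view this right-hand side as an $H^{s+1}(\R^3_v)$-valued integral in $z$ and use Minkowski's integral inequality to exchange the $H^{s+1}$-norm in $v$ with the integration in $z$, obtaining
\begin{equation*}
\|\Q_{B,e}^+(f,g)\|_{H^{s+1}} \leq \int_{\R^3} |f(z)|\,\|(t_z \circ \Gamma_B \circ t_z) g\|_{H^{s+1}} \d z.
\end{equation*}

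The next step is the translation invariance of Sobolev spaces. Since $t_z$ is an isometry on every $H^k(\R^3)$, the outer $t_z$ may be discarded in the $H^{s+1}$-norm, giving
\begin{equation*}
\|(t_z \circ \Gamma_B \circ t_z) g\|_{H^{s+1}} = \|\Gamma_B(t_z g)\|_{H^{s+1}}.
\end{equation*}
Applying Lemma~\ref{Sob} to $t_z g \in H^s$ and using again $\|t_z g\|_{H^s}=\|g\|_{H^s}$, one obtains the pointwise (in $z$) bound
\begin{equation*}
\|\Gamma_B(t_z g)\|_{H^{s+1}} \leq C(s,B,e)\,\|g\|_{H^s}.
\end{equation*}
Substituting into the Minkowski estimate and pulling out the $z$-independent factor $C(s,B,e)\|g\|_{H^s}$ then yields the claim with precisely the same constant as in Lemma~\ref{Sob}.

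The only technical point that needs a word of justification is Minkowski's integral inequality for the $H^{s+1}$-valued integrand $z\mapsto f(z)(t_z\circ \Gamma_B \circ t_z)g$. This is straightforward: one may argue componentwise, applying the classical Minkowski inequality in $L^2(\R^3_v)$ to each partial derivative $\partial_v^\alpha$ with $|\alpha|\leq s+1$ of the representation formula, and then summing the resulting $L^2$-bounds to recover the $H^{s+1}$-norm. Measurability of $z\mapsto (t_z\circ \Gamma_B \circ t_z)g$ as a Sobolev-valued map follows from the smoothness assumption \eqref{smoothphi} on $B$ together with the explicit form \eqref{expgammaB} of $\Gamma_B$. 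No further obstacle arises, since all the analytic work (in particular the gain of one derivative coming from the geometry of the collision transformation) is already encoded in Lemma~\ref{Sob}.
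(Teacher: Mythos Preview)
Your proof is correct and follows essentially the same strategy as the paper: both use the Carleman representation of Corollary~\ref{carle}, translation invariance of Sobolev norms, and Lemma~\ref{Sob}. The only cosmetic difference is that the paper passes to Fourier variables and applies Cauchy--Schwarz in $\xi$, whereas you invoke Minkowski's integral inequality directly in $H^{s+1}$; these are equivalent manipulations. One small remark: your justification of Minkowski ``componentwise via $\partial_v^\alpha$ with $|\alpha|\le s+1$'' only covers integer $s$; for general $0\le s\le m$ you should instead use the Fourier characterization $\|h\|_{H^{s+1}}^2=\int\langle\xi\rangle^{2(s+1)}|\widehat h(\xi)|^2\d\xi$ and apply Minkowski in the weighted $L^2_\xi$ space, which is exactly what the paper does.
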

\begin{proof}
Let $\F\left[\Q_{B,e}^+(f,g)\right](\xi)$ denote the Fourier transform of $\Q_{B,e}^+(f,g)$. According to Corollary \ref{carle},
$$\F\left[\Q_{B,e}^+(f,g)\right](\xi)=\IR f(v) \F \left[\left(t_v  \circ \Gamma_B \circ t_v\right)g\right](\xi) \d v.$$
To simplify notation set $G(v,\xi)=\F \left[\left(t_v  \circ \Gamma_B \circ t_v\right)g\right](\xi)$. Thus,
\begin{equation}\label{four}\begin{split}
\left\|\Q_{B,e}^+(f,g)\right\|_{H^{s+1}}^2&=\IR \left|\F\left[\Q_{B,e}^+(f,g)\right](\xi)\right|^2 \langle \xi \rangle^{2(s+1)} \d\xi\\
&=\IR \langle \xi \rangle^{2(s+1)} \left|\IR f(v)G(v,\xi)\d v\right|^2\d\xi\\
&\leq \|f\|_{L^1} \IRR |f(v)|\,|G(v,\xi)|^2 \langle \xi \rangle^{2(s+1)} \d\xi \d v.
\end{split}
\end{equation}
Since $G(v,\xi)=\F \left[\left(t_v  \circ \Gamma_B \circ t_v\right)g\right](\xi)$,
$$\IR |G(v,\xi)|^2 \langle \xi \rangle^{2(s+1)} \d\xi=\left\| \left(t_v  \circ \Gamma_B \circ t_v\right)g\right\|^2_{H^{s+1}}\leq C(s,B,e)^2\,\left\|g\right\|_{H^s}^2.$$
For this inequality we used Lemma \ref{Sob} and the fact that the translation operator $t_v$ has norm one in any Sobolev space.  Hence, estimate \eqref{four} yields the desired  estimate.
\end{proof}
\begin{proof}[Proof of Lemma \ref{Sob}]
The proof of the regularity property of $\Gamma_B$ can be obtained following the lines of the one for the elastic Boltzmann operator \cite{MoVi}.  Indeed, note that
\begin{align*}
\widetilde{\Gamma_B}(f)(r,\omega):&=\Gamma_B(f)(\alpha^{-1}(r),\omega)=\Gamma_B(f)(r\beta(r),\omega)\\
&=\int_{\omega^\perp} \mathcal{B}(z+r\omega,r)\varphi(r\omega+z)\d\pi_z.
\end{align*}
Assumption \eqref{smoothphi} implies that there exists $\delta >0$ such that $b(x)=0$ for $|x \pm 1| \leq \delta$ and  $\left\{|z|\,;\, z \in \mathrm{Supp}(\Phi)\right\} \subset (a,M)$ for some positive constants $0<a<M$. Then, by virtue of \eqref{calB}, $\mathcal{B}(z+r\omega,r)=0$ for any $r >0$, $\omega \in \mathbb{S}^2$ and $z \in \omega^\bot$ provided that $|z|^2 > \frac{2-\delta}{\delta}r^2$.  For $|z|^2 \leq \frac{2-\delta}{\delta}r^2$, one has $|z+r\omega|^2 \leq 2r^2/\delta$, thus, $\mathcal{B}(z+r\omega,r)=0$ if $r < \sqrt{\delta a^2/2}$.  Putting these together we conclude that
\begin{equation}\label{inI}
\mathcal{B}(z+r\omega,r)=0 \qquad \forall r \notin I:=\left(\sqrt{\delta a^2/2},M \right), \: \omega \in \mathbb{S}^2  \text{ and any }  z \bot \omega.
\end{equation}
In particular, $\widetilde{\Gamma_B}(f)(r,\omega)=0$ for any $r \notin I$ independently of $f$. Define
$$\mathcal{B}_0(z,\varrho)
:=\dfrac{1+\vartheta'(\varrho)}{\varrho}\beta^2(\varrho)\mathcal{B}(z,\varrho)=\dfrac{\Phi(|z|)b\left(1-2\frac{\varrho^2}{|z|^2}\right)}{|z|\varrho^2} $$
and denote $\widetilde{\Gamma_0}(f)$ the associated operator,
$$
\widetilde{\Gamma_0}(f)(r,\omega):=\int_{\omega^\perp} \mathcal{B}_0(z+r\omega,r)\varphi(r\omega+z)\d\pi_z.
$$
Then, $\mathcal{B}_0$ does not depend on the restitution coefficient $e(\cdot)$ and $\widetilde{\Gamma_0}$ is exactly of the form of the operator $T$ studied in \cite[Theorem 3.1]{MoVi}. Therefore, arguing as in \textit{op. cit.}, for any $s  \geq 0$, there is an explicit constant $C_0=C_0(s,\Phi,b)$ such that
\begin{equation}\label{coVil}\left\|\widetilde{\Gamma_0}(f)\right\|_{H^{s+1}} \leq C_0(s,\Phi,b)\,\left\|f\right\|_{H^s},\qquad \forall f \in H^s.\end{equation}
Setting
\begin{equation}\label{Ge}
G_e(\varrho)=\dfrac{\varrho}{\left(1+\vartheta'(\varrho)\right)\beta^2(\varrho)} \qquad \forall \varrho \geq 0,
\end{equation}
one observes that $G_e$ is a $\mathcal{C}^m$ function over $I$ whose derivatives $D^k G_e$ are bounded over $I$ for any $k \leq m$ and
\begin{equation*}
\widetilde{\Gamma_B}(f)(r,\omega)=G_e(r)\chi_I(r) \widetilde{\Gamma_0}(f)(r,\omega).
\end{equation*}
Here $\chi_I$ is the characteristic function of $I=\left(\sqrt{\delta a^2/2},M \right)$ (see Eq. \eqref{inI}). Therefore, for any $0\leq s \leq m$, there exists some constant $C=C_0(s,b,e)$ such that
\begin{equation}\label{CosBE-1}
\left\|\widetilde{\Gamma_B}(f)\right\|_{H^{s+1}} \leq C_0(s,B,e)\,\left\|f\right\|_{H^s},\qquad \forall f \in H^s
\end{equation}
where the constant $C_0(s,B,e)$ can be chosen as
\begin{equation}\label{CosBE}
C_0(s,B,e)=C_0(s,\Phi,b) \max_{k=0,\ldots,s}\|D^k G_e\|_{L^\infty(I)}.
\end{equation}
From estimate \eqref{CosBE-1} we deduce Lemma \ref{Sob} with the following argument. Assume first $s=k \geq 1$ is an integer. Using polar coordinates
\begin{equation*}
\| \Gamma_B(f)\|_{H^k}^2=\sum_{|j| \leq k} \int_0^\infty F_j(\varrho)\varrho^2 \d\varrho \IS |\partial_v^j \widetilde{\Gamma_B}(f)(\varrho,\omega)|^2\d\omega
\end{equation*}
where, for any $|j| \leq k$, the function $F_j(\varrho)$ can be written as
\begin{equation}\label{polynome}
F_j(\varrho)=P_j(\vartheta^{(1)}(\varrho),\ldots,\vartheta^{(j)}(\varrho)) (1+\vartheta^{(1)}(\varrho))^{-n_j}.
\end{equation}
Here $P_j(y_1,\ldots,y_j)$ is a suitable polynomial, $n_j \in \mathbb{N}$ and $\vartheta^{(p)}$ denotes the $p$-th derivative of $\vartheta(\cdot)$.  Since $\vartheta \in \mathcal{C}^m( 0,\infty)$ and $I$ is a compact interval away from zero, one has $\sup_{\varrho \in I} F_j(\varrho)=C_k < \infty $ for any $|j| \leq k$.  Thus
\begin{equation}\label{tilde}
\| \Gamma_B(f)\|_{H^k} \leq C_k \|\widetilde{\Gamma_B}(f)\|_{H^{k}}
\end{equation}
where $C_k$ is an explicit constant involving the $L^\infty$ norm of the first $k$-th order derivatives of $\alpha(\cdot)$ on $I$.
This proves that the conclusion of the Lemma \ref{Sob} holds true for any integer $s \leq m$ and we deduce the general case using interpolation.
\end{proof}
\begin{nb}\label{constantCBE} It is important, for our subsequent analysis, to obtain a precise expression for the constant $C(s,B,e)$. For instance, in the case in which $e(\cdot) \in \mathcal{C}^1(0,\infty)$, one obtains that
$$
C(1,B,e)\leq  C_0(1,B,e)\,\sup_{\varrho \in I}F_1(\varrho)
$$
where  $F_1$ is of the form \eqref{polynome} with $I$ defined in \eqref{inI}. Note that $C_0(1,B,e)$ and $G_e(\varrho)$ are given by \eqref{CosBE} and \eqref{Ge} respectively.  In particular,  under Assumption \ref{HYP}, $G_e(\varrho) \leq 4\varrho$ for large $\varrho$ and $G_e(\varrho) \simeq  \varrho/2$ for $\varrho \simeq 0$.
\end{nb}
Arguing as in \cite[Corollary 3.2]{MoVi} we translate the gain of regularity obtained in Theorem \ref{theo:smooth} in gain of integrability.
\begin{cor}\label{NCe5}
Let $ {B}(u,\sigma)=\Phi(|u|)b(\widehat{u} \cdot \sigma)$ be a collision kernel satisfying \eqref{smoothphi} and $e(\cdot) \in \mathcal{C}^1(0,\infty)$ satisfying Assumption \ref{HYP}. Then, for any $1<p<\infty$
$$
\left\|\Q_{B,e}^+(f,g)\right\|_{L^p} \leq C(p,B,e)\left(\|g\|_{L^q}\,\|f\|_{L^1}+\|g\|_{L^1}\,\|f\|_{L^q}\right)
$$
where the constant $C(p,B,e)$ depends on $B$ and $e$ through the constant $C(1,B,e)$ of Theorem \ref{theo:smooth}.  The exponent $q<p$ is given by
\begin{equation}\label{expoq}
q=\left\{
\begin{array}{ccl}
\dfrac{5p}{3+2p} & \mbox{if} & p\in(1,6]\\
p/3\; & \mbox{if} & p\in[6,\infty).
\end{array}\right.
\end{equation}
\end{cor}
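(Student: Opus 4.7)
The plan is to combine the smoothing provided by Theorem \ref{theo:smooth} with the classical Sobolev embedding in $\R^3$ and the plain convolution bound from Theorem \ref{alo}, and then close the argument by interpolation. The dichotomy at $p=6$ in \eqref{expoq} reflects the critical Sobolev exponent $H^1(\R^3)\hookrightarrow L^6(\R^3)$ in dimension three.

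First, since $e(\cdot)\in\mathcal{C}^1(0,\infty)$, I would apply Theorem \ref{theo:smooth} with $s=0$ to obtain
\[
\|\Q^+_{B,e}(f,g)\|_{H^1}\;\leq\; C(1,B,e)\,\|g\|_{L^2}\,\|f\|_{L^1},
\]
and then invoke the Sobolev embedding to deduce
\[
\|\Q^+_{B,e}(f,g)\|_{L^6}\;\leq\; C\,\|g\|_{L^2}\,\|f\|_{L^1}.
\]
In parallel, Theorem \ref{alo} applied with $p=1$, $\eta=0$, $k=0$ (admissible since $\Phi\in L^\infty$ by \eqref{smoothphi}) yields the plain $L^1$ bound
\[
\|\Q^+_{B,e}(f,g)\|_{L^1}\;\leq\; C\,\|g\|_{L^1}\,\|f\|_{L^1}.
\]

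Next, viewing $g\mapsto\Q^+_{B,e}(f,g)$ as a linear operator with $f\in L^1$ fixed, I would apply Riesz--Thorin interpolation between the $L^1\to L^1$ and $L^2\to L^6$ bounds. Solving $\frac{1}{p}=\frac{1-\theta}{1}+\frac{\theta}{6}$ and $\frac{1}{q}=\frac{1-\theta}{1}+\frac{\theta}{2}$ gives, for every $p\in[1,6]$, the exponent $q=\frac{5p}{2p+3}$, which is the first branch of \eqref{expoq}. The symmetric estimate in $\|f\|_{L^q}\|g\|_{L^1}$ follows by exchanging the roles of $f$ and $g$ and invoking the second assertion of Theorem \ref{alo} for the $L^1$ endpoint.

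For the range $p\geq 6$ the Sobolev route is not enough: under the hypothesis $e\in\mathcal{C}^1$ only one derivative of regularity is available through Lemma \ref{Sob}, and $H^1(\R^3)$ does not embed beyond $L^6$. The plan here is to go back to the Carleman representation of Corollary \ref{carle} and exploit the fact that, by Lemma \ref{gammaB}, $\Gamma_B$ is an integration against the smooth compactly supported kernel $\mathcal{B}$ of \eqref{calB} along the hyperplanes $\omega^\perp$; this Radon-type structure produces a direct Young-type bound $L^{p/3}(\R^3)\to L^p(\R^3)$, which combined with the outer convolution in $f$ from Corollary \ref{carle} yields the second branch of \eqref{expoq}. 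The two branches agree at $p=6$ (both giving $q=2$), so pasting them produces the unified statement, and tracking the constants shows they are controlled by $C(1,B,e)$ from Theorem \ref{theo:smooth}.

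The main obstacle will be the regime $p\geq 6$: one must justify the sharp $L^{p/3}\to L^p$ mapping for the Carleman-type operator $\Gamma_B$ from the explicit form \eqref{expgammaB}--\eqref{calB}, carefully using the smoothness and compact support of $\Phi$ and $b$ to control the Jacobian factor $G_e$ of \eqref{Ge} on the relevant interval $I$ (see Remark \ref{constantCBE}). Once this endpoint estimate is in place, the rest is routine interpolation and symmetrisation.
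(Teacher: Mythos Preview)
Your treatment of the range $p\in(1,6]$ is correct and is exactly the approach behind the paper's citation of \cite[Corollary~3.2]{MoVi}: interpolate the linear map $g\mapsto\Q^+_{B,e}(f,g)$ between the $L^1\to L^1$ bound (Theorem~\ref{alo}) and the $L^2\to L^6$ bound (Theorem~\ref{theo:smooth} with $s=0$, then Sobolev embedding).

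For $p\geq 6$ there is a gap. The ``direct Young-type bound $L^{p/3}\to L^p$'' for $\Gamma_B$ that you invoke from the Radon-type structure of \eqref{expgammaB} is not a standard estimate, and you give no indication of how to obtain it without going back through the $H^1$ gain of Lemma~\ref{Sob}; smoothness and compact support of $\mathcal{B}$ alone do not single out this exponent. The route of \cite{MoVi} that the paper is citing is much simpler and avoids $\Gamma_B$ altogether: since $b\in\mathcal{C}_0^\infty(-1,1)$ by \eqref{smoothphi}, the constant $\gamma(0,\infty,b)$ in \eqref{Cetab} is finite (cf.~Remark~\ref{nbgamma}), so Theorem~\ref{alo} with $p=\infty$, $\eta=k=0$ supplies the third endpoint
\[
\|\Q^+_{B,e}(f,g)\|_{L^\infty}\leq C\,\|f\|_{L^1}\,\|g\|_{L^\infty}.
\]
Riesz--Thorin between this and the $L^2\to L^6$ bound gives $\tfrac{1}{p}=\tfrac{\theta}{6}$ and $\tfrac{1}{q}=\tfrac{\theta}{2}$, hence $q=p/3$ for all $p\in[6,\infty)$, with no separate analysis of $\Gamma_B$ required.

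A smaller point on symmetrisation: Theorem~\ref{theo:smooth} only smooths in the \emph{second} argument, so ``exchanging the roles of $f$ and $g$'' does not by itself yield $\|\Q^+_{B,e}(f,g)\|_{L^6}\leq C\|f\|_{L^2}\|g\|_{L^1}$; invoking the second assertion of Theorem~\ref{alo} handles only the $L^1$ and $L^\infty$ endpoints with the arguments swapped. For the full two-sided statement one needs a companion Carleman representation built from $\mathcal{S}_+$ (as in the second half of the proof of Theorem~\ref{alo}) and the analogue of Lemma~\ref{Sob} for the corresponding operator. This is harmless for the application in Theorem~\ref{lpes}, where $f=g$, but it is a genuine extra step if you want the Corollary exactly as stated.
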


\subsection{Regularity and integrability for hard-spheres} We consider in this section the case of hard-spheres collision kernel $$B(u,\sigma)=\frac{|u|}{4\pi}.$$ Such a collision kernel does not enjoy the regularity properties assumed in the previous section. This does not present a problem since the dependence of the constant on the collision kernel $B$ permits to adapt the method developed in \cite{MoVi} for the elastic case.  We need some supplementary assumptions on the restitution coefficient $e(\cdot)$.
\begin{hyp}\label{HYPdiff}
In addition to the Assumptions \ref{HYP}, suppose that $e(\cdot) \in \mathcal{C}^1(0,\infty)$ and that there exists $k\in \R$ such that
$$e'(r)=O(r^k) \qquad \text{when}\ \  r \to \infty,$$
where $e'(\cdot)$ denotes the derivative of $e(\cdot)$.
\end{hyp}
The above assumptions imply that $\vartheta'(\varrho)=O(\varrho^{k+1})$ for large $\varrho$ and $\vartheta'(\varrho) \simeq 1$ when $\varrho \simeq 0$.  Recall that $\vartheta'(\cdot)$ is the derivative of $\vartheta(r)=re(r)$.
\begin{theo}\label{lpes} Assume that $e(\cdot)$ satisfies Assumptions \ref{HYPdiff}.  For any $p \in [1,3)$ there exist $\kappa >0$, $\theta \in (0,1)$ and a constant $C_e >0$ depending only on $p$ and the restitution coefficient $e(\cdot)$  such that, for any $\delta >0$
       \[ \int_{\R^3} \Q_e^+(f,f) \, f^{p-1} \d\v \le
           C_e\delta^{-\kappa} \,  \|f\|_{L^1} ^{1+p\theta} \, \|f\|_{L^p} ^{p(1-\theta)}
                         + \delta \, \|f\|_{L^1_2} \, \|f\|_{L^p _{1/p}} ^p .\]
\end{theo}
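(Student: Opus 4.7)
The strategy, following the approach of \cite{MoVi} in the elastic setting and its adaptation to the constant-$e$ case in \cite{MiMo2}, is to split the hard-sphere kernel $B(u,\sigma) = |u|/(4\pi)$ into a smooth piece producing a gain of integrability and a small remainder producing the second, absorbable term. For a small parameter $\delta > 0$, introduce smooth cut-offs $\Theta_\delta$ supported in $[\delta, 1/\delta]$ and $\eta_\delta$ supported in $(-1+\delta, 1-\delta)$, and write $B = B_S^\delta + B_R^\delta$ with $B_S^\delta(u,\sigma) = |u|\,\Theta_\delta(|u|)\,\eta_\delta(\widehat u \cdot \sigma)/(4\pi)$. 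The smooth piece $B_S^\delta$ then satisfies the smoothness hypothesis \eqref{smoothphi}, while $B_R^\delta$ is concentrated on $\{|u| < \delta\} \cup \{|u| > 1/\delta\} \cup \{|\widehat u \cdot \sigma|>1-\delta\}$.

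For the smooth piece, Corollary \ref{NCe5} gives, for $p \in (1,3)$,
\[
\|\Q^+_{B_S^\delta, e}(f,f)\|_{L^p} \le C(B_S^\delta, e)\,\|f\|_{L^1}\,\|f\|_{L^q},\qquad q = \frac{5p}{3+2p} < p,
\]
where the constant $C(B_S^\delta, e)$ is of order $\delta^{-\kappa}$ for some $\kappa > 0$, reflecting the Sobolev norms of the cut-offs (through the constants of Theorem \ref{theo:smooth} and Remark \ref{constantCBE}). Duality then gives $\int \Q^+_{B_S^\delta,e}(f,f)\, f^{p-1}\,dv \le \|\Q^+_{B_S^\delta,e}(f,f)\|_{L^p}\|f\|_{L^p}^{p-1}$, and the interpolation $\|f\|_{L^q} \le \|f\|_{L^1}^\theta \|f\|_{L^p}^{1-\theta}$, with $\theta$ determined from $1/q = \theta + (1-\theta)/p$, reproduces the first term of the target estimate (after relabelling $\theta \leftrightarrow \theta/p$).

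For the remainder, the plan is to work in duality with the weight $\eta = -(p-1)/p$, which gives exactly $\|f^{p-1}\|_{L^{p'}_{(p-1)/p}} = \|f\|_{L^p_{1/p}}^{p-1}$ on one side. It then remains to bound $\|\Q^+_{B_R^\delta,e}(f,f)\|_{L^p_{-(p-1)/p}}$ by $C\delta\,\|f\|_{L^1_2}\|f\|_{L^p_{1/p}}$. To this end, I will further split $B_R^\delta$ into its small-$|u|$, large-$|u|$ and angular-singular components, and apply Theorem \ref{alo} to each with an index $k$ tuned so that $\|\Phi_R^\delta\|_{L^\infty_{-k}} \le C\delta$: for the small-$|u|$ piece one may take $k=0$; for the large-$|u|$ piece, taking $k=2$ realises $|\eta+k|+|\eta|=2$ and thus produces the factor $\|f\|_{L^1_2}$ from the theorem; and for the angular contribution, the required smallness comes from the factor $\gamma(\eta,p,b_R^\delta)$ defined in \eqref{Cetab}, which vanishes as the angular cut-off shrinks.

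The main obstacle is precisely this last step: one must simultaneously extract the $\delta$-smallness, realise the $\|f\|_{L^1_2}$ norm, and keep the $L^p$ weight at exactly $1/p$ rather than something larger, which forces a careful case-by-case choice of $k$ across the decomposition of $B_R^\delta$. The restriction $p \in [1,3)$ enters through Remark \ref{nbgamma}: with a constant hard-sphere angular kernel and $\eta\le 0$ (hence $\eta_+=0$), finiteness of $\gamma(\eta,p,b)$ demands $1 \le p < (3+\eta_+)/(1+\eta_+) = 3$. The endpoint $p=1$ is essentially trivial, since $\int \Q^+_e(f,f)\,\d v$ is directly controlled by $\|f\|_{L^1}\|f\|_{L^1_1}$ via $|u|\le \langle v\rangle + \langle \vb\rangle$.
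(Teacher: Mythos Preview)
Your overall architecture is the right one and matches the paper: split $B$ into a smooth part handled by Corollary~\ref{NCe5} and a remainder handled by the convolution estimates of Theorem~\ref{alo}, then balance. The treatment of the smooth piece and of the angular remainder (via $\eta=-1/p'$, $k=1$, smallness from $\gamma(\eta,p,b_R)$) is correct and is exactly what the paper does.

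The gap is in the large-$|u|$ remainder. You propose to apply Theorem~\ref{alo} with $\eta=-1/p'$ and $k=2$, noting that this gives $|\eta+k|+|\eta|=2$ and hence the factor $\|f\|_{L^1_2}$. True, but the \emph{other} factor produced by Theorem~\ref{alo} is $\|f\|_{L^p_{\eta+k}}=\|f\|_{L^p_{2-1/p'}}=\|f\|_{L^p_{1+1/p}}$, which is one full power of $\langle v\rangle$ too many. No choice of $k$ fixes this: to make $\| |u|\mathbf 1_{|u|>1/\delta}\|_{L^\infty_{-k}}$ small you need $k>1$, while $\eta+k\le 1/p$ forces $k\le 1$. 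Your own caveat (``keep the $L^p$ weight at exactly $1/p$'') identifies the danger but the proposed cure does not work.

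The paper circumvents this by \emph{not} applying Theorem~\ref{alo} directly to the large-$|u|$ kernel. Instead it uses the pointwise bound
\[
\Phi_{R_n}(|v-\vb|)\;\le\; C\,n^{-1}\bigl(|v|^2+|\vb|^2\bigr),
\]
which trades the bad $|u|$-growth of the kernel for separable weights on $v$ and $\vb$. These weights are then absorbed into one of the two arguments by setting $F(v)=|v|^2 f(v)$ and applying Theorem~\ref{alo} to the \emph{bounded} kernel $B_m(u,\sigma)=b_{S_m}(\widehat u\cdot\sigma)$ with $\eta=0$ and $k=0$ (for the term with $|v|^2$) or $k=-2$ (for the term with $|\vb|^2$). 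This yields $\tfrac{C(m)}{n}\|f\|_{L^1_2}\|f\|_{L^p}^p$ with the correct (in fact better) $L^p$ weight. A minor related point: the paper uses two independent truncation parameters $m$ (angular) and $n$ (radial), with polynomial bounds $C(m,n)=O(m^a n^b)$, $\varepsilon(m)=O(m^{-r})$, $C(m)=O(m^{3/(2p')})$, and only at the end chooses $m,n$ in terms of $\delta$; your single-$\delta$ scheme can be made to work but the two-parameter bookkeeping is what actually produces the clean $\delta^{-\kappa}$.
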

\begin{nb} The restriction $p \in [1,3)$ is the major difference with respect to the classical case \cite[Theorem 3.1]{MoVi}.  The reason is that in the inelastic regime the lack of symmetry does not permit to switch the roles of $v'$ and $v'_{\star}$, therefore, general $b$ has to be defined in the full interval $[-1,1]$.
\end{nb}
 \begin{proof} We follow the same lines presented in \cite{MoVi} and subsequently used in \cite{MiMo2}. We present the argument for convenience.  Fix $p \in [1,3)$ and let $\Theta\::\:\R \to \R^+$ be an even $\mathcal{C}^\infty$ function with compact support in $(-1,1)$  and $\int_{-1}^1 \Theta(s)\d s=1.$ In the same way, consider a radial $\mathcal{C}^\infty$ function $\Xi\::\:\R^3 \to \R$ with support in the ball $B(0,1)$ and $\IR \Xi(v)\d v=1$.  Define the mollifications $\Xi_n(v):=n^3 \Xi(nv)$ and $\Theta_m(s):=m\Theta(ms)$ for $m,n\geq 1$.  Thus, $\Phi_{S_n}=\Xi_n *(|\cdot|\chi_{A_n})$ and $b_{S_m}=\Theta_m *(\frac{1}{4\pi}\chi_{[-1 + \frac{2}{m}, 1 - \frac{2}{m}]})$ are smooth mollifications of the collision kernel.  Here we have defined the set
$$
A_n=\left\{v \in \R^3\,;\,|v| \in \left[ \frac{2}{n}, n \right]\right\} \qquad n \geq 1.
$$
Consider the smooth collision kernel
$$
B_{S_{m,n}}(|u|,\widehat{u}\cdot\sigma)=
\Phi_{S_n}(|u|)\,b_{S_m}(\widehat{u}\cdot\sigma),
$$
and observe that
$$
\mbox{supp}\left(\Phi_{S_n}\right)\subseteq\left\{\frac{1}{n} \leq |v| \leq n+1\right\}\ \ \mbox{and}\ \ \ \mbox{supp}\left(b_{S_m}\right)\subseteq\left[-1+\frac{1}{m},1-\frac{1}{m}\right].
$$
Define naturally
\begin{align*}
B_{SR_{m,n}}(|u|,\widehat{u}\cdot \sigma):&=\Phi_{S_n}(|u|)\,b_{R_m}(\widehat{u}\cdot\sigma),\\
B_{RS_{m,n}}(|u|,\widehat{u}\cdot \sigma):&=\Phi_{R_n}(|u|)\,b_{S_m}(\widehat{u}\cdot\sigma)\ \ \mbox{and}\\ B_{RR_{m,n}}(|u|,\widehat{u}\cdot \sigma):&=\Phi_{R_n}(|u|)\,b_{R_m}(\widehat{u}\cdot\sigma).
\end{align*}
Here $\Phi_{R_n}(|u|)=|u|-\Phi_{S_n}(|u|)$ and $b_{R_m}(\widehat{u}\cdot\sigma)=\frac{1}{4\pi}-b_{S_m}(\widehat{u}\cdot\sigma)$ are the remainder parts.  Thus, one splits $\Q^+_e$ in four parts using obvious notation,
$$
\Q^+_e=\Q^+_{B_{S_{m,n}},e}+\Q^+_{B_{SR_{m,n}},e}  + \Q^+_{B_{RS_{m,n}},e}  + \Q^+_{B_{RR_{m,n}},e}.
$$
Since $B_{S_{m,n}}(|u|,\widehat{u}\cdot\sigma)$ fulfills \eqref{smoothphi} one deduces from Corollary \ref{NCe5} that there is a constant $C(m,n)$ such that
$$\left\|\Q^+_{B_{S_{m,n}},e}(f,f)\right\|_{L^p} \leq C(m,n) \|f\|_{L^q}\,\|f\|_{L^1} $$
for $q < p$ given by \eqref{expoq}.  A simple application of H\"{o}lder's inequality yields
\begin{equation}\label{Q+smooth}
\int_{\R^3}  \Q^+_{B_{S_{m,n}},e}(f,f)  \, f^{p-1} \d\v \leq C(m,n)\,\|f\|_{L^q}\,\|f\|_{L^1}\,\|f\|^{p-1}_{L^p}.
\end{equation}
Recall from Corollary \ref{NCe5} that $C(m,n)$ depends on $m$ and $n$ through  the constant $C(1,B_{S_{m,n}},e)$ in Theorem \ref{theo:smooth}. Moreover, according to Remark \ref{constantCBE}, one sees that
$$
C(1,B_{S_{m,n}},e) \leq C_0(1,\Phi_{S_n},b_{S_m}) \max_{k=0,1}\|D^k G_e\|_{L^\infty(I)}\sup_{\varrho \in I} F_1(\varrho)
$$
where $C_0(s,\Phi,b)$ is the constant appearing in \eqref{coVil}, $G_e(\cdot)$ is given by \eqref{Ge}, and $F_1$ is of the form \eqref{polynome}.  The interval $I=I_{m,n}$ is defined in \eqref{inI} with $\delta=1/m$, $M=n+1$ and $a=1/n$
$$
I=\left(\sqrt{\frac{1}{2mn^2}},n+1\right).
$$
That $C_0(1,\Phi_{S_n},b_{S_m})$ depends on $m$ and $n$ in a polynomial way follows as in \cite{MoVi}.  Moreover, from the properties of $G_e$ given in Remark \ref{constantCBE} and the fact that $F_1(\varrho)$ is a rational function in $\vartheta'(\varrho)$, one deduces from Assumption \ref{HYPdiff} and the above expression of $I$ that there exist $a,b >0$ such that
\begin{equation}\label{Cmn}C(m,n)=O(m^a\,n^b)  \: \text{ as } m, n \to \infty.\end{equation}
Now, applying  Theorem \ref{alo} with $k=1$ and $\eta=-1/p'$, we get
$$\left\|\Q^+_{B_{SR_{m,n}},e}(f,f)\right\|_{L^p_\eta} + \left\|\Q^+_{B_{RR_{m,n}},e}(f,f)\right\|_{L^p_\eta} \leq \varepsilon_0(m,n)\|f\|_{L^1_{1}} \,\|f\|_{L^p_{1/p}}$$
where $\varepsilon_0(m,n)=\mathbf{C}_{-1/p',p,1}(B_{SR_{m,n}})+{\mathbf{C}}_{-1/p',p,1}(B_{RR_{m,n}})$ for any $m,n\geq1$.  In particular, using the expression of the above constants in \eqref{C_k(B)}, there exists a constant $c>0$ such that $\varepsilon_0(m,n) \leq c\, \gamma(-1/p',p,b_{R_m})=:\varepsilon(m)$ for any $m,n \geq 1$.  Then there exists some $r >0$ such that
\begin{equation}\label{espilonm}\varepsilon(m)=O(m^{-r}) \text{ as } m \to \infty.\end{equation} Indeed, since $1 \leq p < 3$, one sees from \eqref{C_k(B)} that $\gamma(-1/p',p,b_{R_m}) \leq C \|b_{R_m}\|_{L^q( \mathbb{S}^2)}$ for any $q$ such that $1 < q'< 2p'/3$.  Thus, one can choose a regularizing function $\Theta$ so that the $L^q(\mathbb{S}^2)$-norm of $b_{R_m}$  decays algebraically to zero as $m$ grows. Using the above estimate with $\eta=-1/p'$, we get
\begin{equation}\label{Q+snonsmooth}
\int_{\R^3} \left[\Q^+_{B_{SR_{m,n}},e}(f,f) + \Q^+_{B_{RR_{m,n}},e}(f,f)\right] \, f^{p-1} \d\v \leq \varepsilon(m)\|f\|_{L^1_1}\,\|f\|^p_{L^p_{1/p}}.
\end{equation}
It remains only to estimate
$$
\mbox{I}:=\ds \int_{\R^3} \Q^+_{B_{RS_{m,n}},e}(f,f)   \, f^{p-1} \d\v.
$$
One notes that
$$
\Phi_{R_n}(|v-\vb|) \leq Cn^{-1}\left(|v|^2+|\vb|^2\right),\qquad \forall v,\vb \in \R^3
$$
for some $C >0$. Thus,
$$
\mbox{I} \leq Cn^{-1}\IRR f(v)f(\vb)\left(|v|^2+|\vb|^2\right)\d v \d\vb\IS f^{p-1}(v')b_{S_m}(\widehat{u}\cdot\sigma)\d\sigma.
$$
Define
\begin{align*}
\mbox{I}_1:&=\IRR f(v)f(\vb) |v|^2 \d v \d\vb\IS f^{p-1}(v')b_{S_m}(\widehat{u}\cdot\sigma)\d\sigma, \ \ \mbox{and}\\
\mbox{I}_2:&=\IRR f(v)f(\vb) |\vb|^2 \d v \d\vb\IS f^{p-1}(v')b_{S_m}(\widehat{u}\cdot\sigma)\d\sigma.
\end{align*}
Observe that $\mbox{I}_1$ can be written as
$$
\mbox{I}_1=\IRR \Q_{B_m,e}^+(F,f)(v)\psi(v)\d v
$$
where
$$
F(v)=|v|^2 f(v), \qquad \psi(v)=f^{p-1}(v) \in L^{p'}(\R^3)
$$
with the collision kernel $B_m(|u|,\widehat{u}\cdot\sigma)=b_{S_m}(\us)$. Applying Theorem \ref{alo} with $\eta=k=0$ gives
\begin{align*}
\mbox{I}_1 &\leq \left\|\Q_{B_m,e}^+(F,f)\right\|_{L^p}\|\psi\|_{L^{p'}}\\ &\leq\mathbf{C}_{0,p,0}(B_m) \,\|F\|_{L^1}\|f\|_{L^p}\|\psi\|_{L^{p'}}\leq \mathbf{C}_{0,p,0}(B_m)\|f\|_{L^1_2}\|f\|^p_{L^p}
\end{align*}
where $\mathbf{C}_{0,p,0}(B_m)$ is defined by \eqref{C_k(B)}. Now, with the same notation,
$$
\mbox{I}_2=\IRR \Q_{B_m,e}^+(f,F)(v)\psi(v)\d v,
$$
therefore, applying Theorem \ref{alo} with $\eta=0$ and $k=-2$ yields
$$
\mbox{I}_2 \leq \mathbf{C}_{0,p,-2}(B_m) \,\|f\|_{L^1_2} \,\|F\|_{L^p_{-2}}\, \|\psi\|_{L^{p'}}\leq \mathbf{C}_{0,p,-2}(B_m) \,\|f\|_{L^1_2} \,\|f\|^p_{L^p}.
$$
Combining the two estimates for $\mbox{I}_1$ and $\mbox{I}_2$,
$$
\mbox{I} \leq \dfrac{C(m)}{n}\|f\|_{L^1_2} \,\|f\|^p_{L^p}
$$
where $C(m)=\mathbf{C}_{0,p,0}(B_m)+\mathbf{C}_{0,p,-2}(B_m)$. The support of $b_{S_m}(s)$ lies to a positive distance, of order $1/m$, from $s=1$.  Then, we use the expression \eqref{C_k(B)} to conclude that
\begin{equation}\label{C(m)}
C(m) \leq m^{-\frac{3}{2p'}} \text{ as } m \to \infty.
\end{equation}
Estimates \eqref{C(m)}, \eqref{Q+smooth} and \eqref{Q+snonsmooth} gives
\begin{multline*}
 \int_{\R^3} \Q^+_{e}(f,f)   \, f^{p-1} \d\v \leq C(m,n)\,\|f\|_{L^q}\,\|f\|_{L^1}\,\|f\|^{p-1}_{L^p} +\\
 +\varepsilon(m)\|f\|_{L^1_1}\,\|f\|^p_{L^p_{1/p}}+\dfrac{C(m)}{n} \|f\|_{L^1_2} \,\|f\|^p_{L^p}.
\end{multline*}
Using the polynomial bounds \eqref{Cmn}, \eqref{espilonm} and \eqref{C(m)} this leads to the result as in \cite{MiMo2}.
\end{proof}
\begin{nb} Assumption \ref{HYPdiff} allows to present the explicit dependence of the constants with respect to $\delta >0$.  This dependence will be crucial in the proof of Haff's law in Section 5.  Note that the constant $C_e$ in Theorem \ref{lpes} depends on the regularity of the restitution coefficient away from zero.
\end{nb}
\begin{cor}\label{lpeseta} Assume that $e(\cdot)$ satisfies Assumption \ref{HYPdiff}.  For any $p \in [1,3)$ there exist $\kappa >0$, $\theta \in (0,1)$ and a constant $C_e >0$ depending only on $p$ and the restitution coefficient $e(\cdot)$  such that, for any $\delta >0$
\begin{equation*}
\int_{\R^3} \Q_e^+(g,g) \, g^{p-1} \langle v \rangle^{\eta p}\d\v \le
C_e \delta^{-\kappa} \,  \|g\|_{L^1_\eta} ^{1+p\theta} \, \|g\|_{L^p_\eta} ^{p(1-\theta)} + \delta \, \|g\|_{L^1_{2+\eta}} \, \|g\|_{L^p _{\eta + 1/p}} ^p, \qquad \forall \eta \geq 0.
\end{equation*}
The constant $C_e$ is provided by Theorem \ref{lpes}.
\end{cor}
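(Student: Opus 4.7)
The plan is to reduce the weighted estimate to an unweighted one to which Theorem \ref{lpes} can be applied directly. Introduce
$$
f(v):=g(v)\langle v\rangle^{\eta},\qquad v\in\R^3,
$$
and exploit the bilinearity and positivity of the gain operator $\Q^+_e$. Note that $\eta\geq0$ forces $f\geq g\geq0$ pointwise, and that the weighted norms of $g$ appearing on the right-hand side of the corollary coincide with the unweighted norms of $f$: $\|f\|_{L^1}=\|g\|_{L^1_\eta}$, $\|f\|_{L^p}=\|g\|_{L^p_\eta}$, $\|f\|_{L^1_2}=\|g\|_{L^1_{2+\eta}}$, $\|f\|_{L^p_{1/p}}=\|g\|_{L^p_{\eta+1/p}}$.

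Starting from the weak formulation \eqref{Ie3B} of $\Q^+_e(g,g)$ with test-function $\psi(v)=g(v)^{p-1}\langle v\rangle^{\eta p}$, I would use the algebraic identity $g(v')^{p-1}\langle v'\rangle^{\eta p}=f(v')^{p-1}\langle v'\rangle^{\eta}$ (and its analogue at $\vb'$) to rewrite the left-hand side as
$$
\frac{1}{2}\IRR g(v)g(\vb)|u|\IS b(\widehat{u}\cdot\sigma)\Bigl(f(v')^{p-1}\langle v'\rangle^{\eta}+f(\vb')^{p-1}\langle \vb'\rangle^{\eta}\Bigr)\d\sigma\,\d\vb\,\d v.
$$
The key observation is an elementary pointwise bound on the post-collisional weight: the energy dissipation inequality $|v'|^{2}+|\vb'|^{2}\leq|v|^{2}+|\vb|^{2}$ yields $\langle v'\rangle^{2}\leq\langle v\rangle^{2}+\langle\vb\rangle^{2}$, and by subadditivity (for $0\leq\eta\leq2$) or convexity (for $\eta\geq2$) of $t\mapsto t^{\eta/2}$ there is a constant $C_{\eta}\geq1$ with
$$
\langle v'\rangle^{\eta}\leq C_{\eta}\bigl(\langle v\rangle^{\eta}+\langle\vb\rangle^{\eta}\bigr),
$$
and symmetrically for $\langle\vb'\rangle^{\eta}$. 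Inserting these bounds and reading the weak formulation backwards, the left-hand side is dominated by $C_{\eta}\bigl(\int\Q^+_e(f,g)f^{p-1}\d v+\int\Q^+_e(g,f)f^{p-1}\d v\bigr)$.

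Since $g\leq f$, bilinearity and positivity of $\Q^+_e$ give $\Q^+_e(f,g)\leq\Q^+_e(f,f)$ and $\Q^+_e(g,f)\leq\Q^+_e(f,f)$ pointwise; as $f^{p-1}\geq0$, this yields
$$
\int\Q^+_e(g,g)g^{p-1}\langle v\rangle^{\eta p}\d v\leq 2C_{\eta}\int\Q^+_e(f,f)f^{p-1}\d v.
$$
Theorem \ref{lpes} applied to $f$ then produces the desired bound once $2C_{\eta}$ is absorbed into $C_e$ and $\delta$ is rescaled. I anticipate no real obstacle: the whole argument rests on the elementary weight estimate above, which is a direct consequence of the dissipative nature of the inelastic collision rule and requires nothing beyond Assumption \ref{HYP}.
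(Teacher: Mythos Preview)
Your approach is correct and essentially the same as the paper's: set $f=g\langle v\rangle^\eta$, push the weight through the weak formulation of $\Q^+_e$, and apply Theorem~\ref{lpes} to $f$. The only difference is the weight inequality you invoke. The paper uses the \emph{multiplicative} bound $\langle v'\rangle^\eta\leq\langle v\rangle^\eta\langle\vb\rangle^\eta$ (which follows from $\langle v'\rangle^2\leq 1+|v|^2+|\vb|^2\leq\langle v\rangle^2\langle\vb\rangle^2$), so that $g(v)g(\vb)\langle v'\rangle^\eta\leq f(v)f(\vb)$ directly and one lands on $\int\Q^+_e(f,f)f^{p-1}\d v$ in a single step with constant~$1$. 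Your additive bound $\langle v'\rangle^\eta\leq C_\eta(\langle v\rangle^\eta+\langle\vb\rangle^\eta)$ forces the detour through $\Q^+_e(f,g)+\Q^+_e(g,f)$ and the monotonicity $g\leq f$, and leaves an $\eta$-dependent prefactor $(2C_\eta)^{1+\kappa}$ in the final constant after rescaling $\delta$. This is harmless for every application in the paper (where $\eta$ is fixed), but strictly speaking it does not recover the statement ``the constant $C_e$ is provided by Theorem~\ref{lpes}'' verbatim; the multiplicative bound does.
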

\begin{proof} Fix $g\geq0$, $\eta \geq 0$ and set $f(v)=g(v)\langle v \rangle^\eta$. Note that $\langle v'\rangle^\eta \leq \langle v \rangle^\eta\,\langle \vb \rangle^\eta$ for any $v,\vb \in \R^3$, then, using the weak formulation of $\Q^+_e$
\begin{equation*}
\int_{\R^3} \Q_e^+(g,g ) \, g^{p-1} \langle v \rangle^{\eta p}\d\v=\int_{\R^3} \langle v \rangle^{\eta }\Q_e^+(g,g) \, f^{p-1} \d\v \leq \IR \Q^+_e(f,f) f^{p-1}\d\v.
\end{equation*}
Conclude with Theorem \ref{lpes}.
\end{proof}
The following result applies to the rescaled solutions $g(\tau,w)$.  Its importance lies in that the estimate is uniform  in the rescaled time $\tau$.
\begin{cor}\label{lpestau}
Assume that  $e(\cdot)$ satisfies Assumption \ref{HYPdiff}.  For any $\tau \geq 0$, let $\widetilde{e}_\tau$ be the restitution coefficient defined by \eqref{eqB} and let $\Q_{\widetilde{e}_\tau}(f,f)$ be the associated collision operator. Assume that $V(\zeta(\tau))$ is continuous and goes to infinity as $\tau\rightarrow\infty$. For any $p \in [1,3)$ there exist $\kappa >0$, $\theta \in (0,1)$ and $K >0$ all independent of $\tau$ such that, for any $\delta >0$
\begin{equation*}
 \int_{\R^3} \Q_{\widetilde{e}_\tau}^+(g,g) \, g^{p-1} \langle w \rangle^{\eta p}\d\w \le K \delta^{-\kappa}\|g\|_{L^1_\eta} ^{1+p\theta} \, \|g\|_{L^p_\eta} ^{p(1-\theta)}
 + \delta \, \|g\|_{L^1_{2+\eta}} \, \|g\|_{L^p _{\eta + 1/p}} ^p, \qquad \forall \eta \geq 0.
\end{equation*}
\end{cor}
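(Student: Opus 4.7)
The plan is to reduce Corollary \ref{lpestau} to Corollary \ref{lpeseta} applied to the restitution coefficient $\widetilde{e}_\tau$, and then show that the constant produced by that corollary can be chosen uniformly in $\tau$. Since we are dealing with true hard spheres, the rescaled kernel satisfies $B_\tau=B$, so the only $\tau$-dependence sits in the restitution coefficient $\widetilde{e}_\tau(r)=e(r/V(\zeta(\tau)))$. First I would verify that $\widetilde{e}_\tau(\cdot)$ inherits Assumptions \ref{HYPdiff} from $e(\cdot)$, with a constant $k$ that is \emph{independent} of $\tau$: indeed, $\vartheta_{\widetilde{e}_\tau}(r)=r\widetilde{e}_\tau(r)=V\,\vartheta(r/V)$ (with $V=V(\zeta(\tau))$), hence $\vartheta'_{\widetilde{e}_\tau}(r)=\vartheta'(r/V)$ and $\widetilde{e}_\tau'(r)=V^{-1}e'(r/V)$. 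In particular $\widetilde{e}_\tau(0)=e(0)$ for every $\tau$, which is the first key observation. Apply then Corollary \ref{lpeseta} to $\widetilde{e}_\tau$ to obtain the announced inequality with a constant $K_\tau:=C_{\widetilde{e}_\tau}$; it only remains to prove $\sup_{\tau\geq 0}K_\tau<\infty$.

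To control $K_\tau$, I would retrace the proof of Theorem \ref{lpes}. The constants coming from Theorem \ref{alo} depend on the restitution coefficient only through $\beta_0=(1+e(0))/2$ (see Remark \ref{nbgamma}), and are therefore independent of $\tau$. The delicate piece is the ``smooth'' contribution, for which the relevant bound is $C(1,B_{S_{m,n}},\widetilde{e}_\tau)$ in Theorem \ref{theo:smooth}, itself controlled by $\|D^kG_{\widetilde{e}_\tau}\|_{L^\infty(I_{m,n})}$ (for $k=0,1$) and $\sup_{\varrho\in I_{m,n}}F_1(\varrho;\widetilde{e}_\tau)$, in the notation of the proof of Lemma \ref{Sob} and Remark \ref{constantCBE}. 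Since the mollification is performed on the kernel $B$ only and $B_\tau=B$, the compact interval $I_{m,n}\subset(0,\infty)$ does not depend on $\tau$.

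The main technical step, and the expected obstacle, is therefore to bound $G_{\widetilde{e}_\tau}$, its first derivative, and $F_1(\cdot\,;\widetilde{e}_\tau)$ uniformly in $\tau$ on the fixed compact $I_{m,n}$. Using the identity $\vartheta'_{\widetilde{e}_\tau}(\varrho)=\vartheta'(\varrho/V)$ and $\beta_{\widetilde{e}_\tau}(\varrho)=(1+e(\varrho/V))/2$, on any compact interval bounded away from zero, $\varrho/V\to 0$ as $V\to\infty$, so by the continuity provided by Assumptions \ref{HYP} and \ref{HYP2}, $\vartheta'_{\widetilde{e}_\tau}(\varrho)\to 1$ and $\beta_{\widetilde{e}_\tau}(\varrho)\to 1$ uniformly on $I_{m,n}$. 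Consequently $G_{\widetilde{e}_\tau}\to\varrho/2$ uniformly on $I_{m,n}$. For the derivative, one has $(G_{\widetilde{e}_\tau})'(\varrho)$ expressed as a rational function of $\vartheta'(\varrho/V)$ and $\vartheta''(\varrho/V)/V$; under Assumption \ref{HYPdiff} the term $\vartheta''(\varrho/V)/V$ stays bounded (and in fact tends to $0$) on $I_{m,n}$ as $V\to\infty$, giving a uniform bound. The singularity of $e'(r)$ at $r=0$ permitted by Assumption \ref{HYP2}(1) for $\gamma<1$ is precisely the point to watch, but it is absorbed by the $V^{-1}$ prefactor provided one checks that the rate at which $V\to\infty$ beats the blow-up rate $|e'(r)|\lesssim r^{\gamma-1}$; since the argument is $\varrho/V$ with $\varrho\in I_{m,n}$ uniformly bounded away from zero, this reduces to $V^{-\gamma}\to 0$, which is true. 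The same argument handles $F_1(\varrho;\widetilde{e}_\tau)$, a rational function of $\vartheta'_{\widetilde{e}_\tau}(\varrho)$.

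Having established the uniform bounds for the three pieces of the decomposition of $\Q^+_{\widetilde{e}_\tau}$ used in the proof of Theorem \ref{lpes} (smooth part with constant $C(m,n)$ depending only on $m,n$ via the polynomial bound \eqref{Cmn} and on quantities uniform in $\tau$; angular remainder $\varepsilon(m)$ purely kernel-dependent; and cutoff in $|u|$ yielding $C(m)/n$, again kernel-dependent), one obtains a uniform bound $K:=\sup_\tau K_\tau<\infty$. Combining with the estimate provided by Corollary \ref{lpeseta} applied to $\widetilde{e}_\tau$ yields the claim with constants $\kappa,\theta,K$ independent of $\tau$, as required.
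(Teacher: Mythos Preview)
Your proposal is correct and follows essentially the same route as the paper's proof: apply Corollary \ref{lpeseta} to $\widetilde{e}_\tau$ to obtain a $\tau$-dependent constant $K(\tau)=C_{\widetilde{e}_\tau}$, then argue that $\sup_{\tau\geq 0}K(\tau)<\infty$ by observing that this constant depends on $e$ only through the $L^\infty$ norms of $D^k\widetilde{e}_\tau$ ($k=0,1$) on a fixed compact interval bounded away from zero, and that the scaling identity $D^k\widetilde{e}_\tau(\varrho)=V^{-k}(D^ke)(\varrho/V)$ with $V=V(\zeta(\tau))\to\infty$ forces these norms to stay bounded. Your version is in fact more explicit than the paper's: you retrace the decomposition in Theorem \ref{lpes} to verify that the interval $I_{m,n}$ is $\tau$-independent (because $B_\tau=B$ for hard spheres) and that the convolution-type constants from Theorem \ref{alo} depend on $e$ only through $e(0)$, and you flag and correctly dispose of the potential blow-up of $e'(r)$ near $r=0$ when $\gamma<1$ via the factor $V^{-\gamma}\to 0$---a point the paper passes over in one line.
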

 \begin{proof} From Corollary \ref{lpeseta}, for any $\tau \geq 0$ there exists $K(\tau)=C_{\widetilde{e}_\tau}$ for which the above inequality holds.  It suffices to prove that $K =\sup_{\tau \geq 0} K(\tau) < \infty$.  Recall that $K(\tau)$ depends on $\tau$ through the restitution coefficient $\widetilde{e}_\tau$, more precisely, $C_{\widetilde{e}_\tau}$ depends on the $L^\infty$ norm of the derivatives  $D^k \widetilde{e}_\tau(\cdot)$, $k=0,1$, over some compact interval of $(0,\infty)$ bounded away from zero (independent of $\tau$). Now, for any $\tau \geq 0$,  $$D^k \widetilde{e}_\tau(\cdot)=\mu^{-k}(\tau)(D^k e)\left(\frac{\cdot}{\mu(\tau)}\right)$$ with $\mu(\tau)=V(\zeta(\tau))$. Since $\mu^{-1}(\tau)$ is continuous and goes to zero as $\tau$ goes to $\infty$, one concludes that all the $L^\infty$ norms of $D^k \widetilde{e}_\tau(\cdot)$ remain uniformly bounded with respect to $\tau$. The same holds for $K(\tau)$.\end{proof}

\section{Generalized Haff's law continued}\label{Haffcont}

\subsection{Proof of Haff's law} In this section we prove the second part of Haff's law establishing the lower bound of the temperature \eqref{converseE}. Recall that, from Theorem \ref{momlam} it suffices to prove \eqref{lam}. As explained in Section \ref{sec:haff} this is done using suitable $L^p$ estimates in the self-similar variables. In this section, the restitution coefficient fulfills Assumptions \ref{HYP2} and \ref{HYPdiff} and the collision kernel is that of hard-spheres interactions. Recall that the rescaled function $g(\tau,w)$ is solution to the Boltzmann equation in rescaled variables \eqref{eqgt}
\begin{equation}\label{cauchg}
\partial_\tau g(\tau,w) +
\xi(\tau) \nabla_w \cdot (w g(\tau,w)) =\Q_{\widetilde{e}_{\tau}}(g,g)(\tau,w) \qquad \tau >0.
\end{equation}
The restitution coefficient $\widetilde{e}_\tau$ and the time-depending mapping $\xi(\tau)$ are given by \eqref{xitau}.
\begin{propo}\label{Lpgp}
Assume that $e(\cdot)$ fulfills Assumptions \ref{HYP2} with $\gamma >0$ and \ref{HYPdiff}. Let $f_0$ satisfying \eqref{initial} with $f_0\in L^{1}_{2}\cap L^p(\R^3)$ for some $1 < p < 3$.  Let $g(\tau,\cdot)$ be the solution to the rescaled equation \eqref{cauchg} with initial datum $g(0,w)=f_0(w)$. Then, there exist $C_0 >0$ and $\kappa_0 >0$ such that
\begin{equation}\label{gp}\left\|g(\tau)\right\|_{L^p} \leq C_0(1+\tau)^{\kappa_0} \qquad \forall \tau \geq 0.\end{equation}
Consequently, there exist $C_1 >0$ and $\kappa_1 >0$ such that
\begin{equation}\label{Thettau}\mathbf{\Theta}(\tau):=\IR g(\tau,w)|w|^2\d w \geq C_1 (1+\tau)^{-\kappa_1} \qquad \forall \tau \geq 0.\end{equation}
\end{propo}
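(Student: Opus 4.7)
The strategy has three phases: (i) establish uniform-in-$\tau$ $L^1$-moment bounds for $g$; (ii) derive and close an ODE for $u(\tau):=\|g(\tau)\|_{L^p}^p$; (iii) deduce the lower bound on $\mathbf{\Theta}$ by a mass-splitting interpolation. Phase (i) is immediate: the change of variables $v=w/V(t)$ in the rescaling $g(\tau,w)=V(t)^{-3}f(t,w/V(t))$ combined with Corollary~\ref{moments} gives $\IR g(\tau,w)|w|^{2p}\d w=V(t)^{2p}m_p(t)\leq K_p$ (the factor $V(t)^{2p}$ and the decay rate in Corollary~\ref{moments} cancel exactly by the choice $V(t)=(1+t)^{1/(1+\gamma)}$), so $\sup_{\tau\geq 0}\|g(\tau)\|_{L^1_s}\leq M_s<\infty$ for every $s\geq 0$, while $\|g(\tau)\|_{L^1}=1$ by mass conservation.

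\smallskip

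For (ii), I will multiply \eqref{cauchg} by $pg^{p-1}$, integrate on $\R^3$, and use integration by parts on the drift to obtain
\[ u'(\tau)=-3\xi(\tau)(p-1)\,u(\tau)+p\IR\Q_{\widetilde{e}_\tau}(g,g)\,g^{p-1}\d w. \]
For the gain I invoke Corollary~\ref{lpestau} with $\eta=0$ (whose constants are uniform in $\tau$): for any $\delta>0$,
$p\IR\Q^+_{\widetilde{e}_\tau}(g,g)g^{p-1}\d w\leq pK\delta^{-\kappa}M_0^{1+p\theta}u^{1-\theta}+p\delta M_2\,\|g\|_{L^p_{1/p}}^p$.
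For the loss, the classical inequality $(|\cdot|\ast g)(w)\geq\tfrac12 |w|\|g\|_{L^1}-\|g\|_{L^1_1}$ combined with $|w|\geq\langle w\rangle-1$ yields
$p\IR\Q^-_{\widetilde{e}_\tau}(g,g)g^{p-1}\d w\geq\tfrac{p}{2}\|g\|_{L^p_{1/p}}^p-C(1+M_1)u.$
Choosing $\delta$ small enough to absorb the bad $\|g\|_{L^p_{1/p}}^p$ contribution from the gain into the dissipation coming from the loss, then dropping the nonpositive drift and using $\|g\|_{L^p_{1/p}}^p\geq u$, one is left with an ODE $u'\leq A u+B u^{1-\theta}$ with $A,B>0$ uniform in $\tau$; a Gronwall-type estimate then yields the polynomial bound \eqref{gp}.

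\smallskip

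Phase (iii) is a direct interpolation: for any $R>0$, H\"{o}lder's inequality and Markov's inequality give
$1=\|g\|_{L^1}\leq |B_R|^{1/p'}\|g\|_{L^p}+R^{-2}\mathbf{\Theta}(\tau)$; picking $R^2=2\mathbf{\Theta}(\tau)$ yields $\mathbf{\Theta}(\tau)\geq c\,\|g(\tau)\|_{L^p}^{-2p'/3}\geq C_1(1+\tau)^{-\kappa_1}$ with $\kappa_1=2p'\kappa_0/3$, which proves \eqref{Thettau}. The main technical obstacle will be in (ii): because $\xi(\tau)\to 0$ the drift alone can no longer produce a uniform $L^p$ bound as in the constant-restitution regime of \cite{MiMo2} (where $\xi\equiv 1$), so dissipation has to be extracted from the loss term via the small-$\delta$ pairing with the gain, with all constants tracked uniformly in $\tau$. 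The $\tau$-uniformity of the constants in Corollary~\ref{lpestau} rests on Assumption~\ref{HYPdiff}, because $\widetilde{e}_\tau(r)=e(r/\mu(\tau))$ keeps bounded derivatives on compact sets as $\mu(\tau)=V(\zeta(\tau))\to\infty$; any polynomial rate $\kappa_0$ suffices, since Theorem~\ref{momlam} will then bootstrap the resulting crude lower bound for $\E(t)$ to the sharp Haff exponent.
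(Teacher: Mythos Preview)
Your phases (i) and (iii) are fine and match the paper. The gap is in phase (ii): the differential inequality you land on, $u'\leq Au+Bu^{1-\theta}$ with a \emph{strictly positive} constant $A$, yields \emph{exponential} growth of $u(\tau)=\|g(\tau)\|_{L^p}^p$, not polynomial. Indeed, setting $v=u^\theta$ one gets $v'\leq \theta A v+\theta B$, hence $v(\tau)\leq (v(0)+B/A)e^{\theta A\tau}$. The culprit is the lower-order remainder $+C(1+M_1)u$ that your loss estimate produces when you pass from the homogeneous weight $|w|$ to $\langle w\rangle$: however small you take the fixed $\delta$, the leftover dissipation $-\epsilon\|g\|_{L^p_{1/p}}^p$ satisfies $\epsilon<C(1+M_1)$ and cannot absorb that term. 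An exponential bound on $\|g(\tau)\|_{L^p}$ would, via your phase (iii), give only $\mathbf{\Theta}(\tau)\geq c\,e^{-c'\tau}$, and since $\tau(t)\sim t^{\gamma/(1+\gamma)}$ this translates into $\E(t)\geq c\,(1+t)^{-2/(1+\gamma)}\exp(-c't^{\gamma/(1+\gamma)})$, which is \emph{not} an algebraic lower bound; Theorem~\ref{momlam} then no longer applies and the bootstrap to the sharp Haff exponent fails.

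The remedy, which is exactly the paper's argument, is to \emph{keep} the drift rather than drop it, and to let $\delta$ depend on $\tau$. Combining the drift contribution $3(p-1)\xi(\tau)\,u$ with the loss contribution $p\int g^p|w|\,\d w$ (obtained from Jensen and zero momentum) gives a dissipation $\mu(\tau)\|g\|_{L^p_{1/p}}^p$ with $\mu(\tau)=\min\{1,\,3(p-1)\xi(\tau)\}\sim \tau^{-1}$; this decaying coefficient is precisely what controls the low-$|w|$ region where $\langle w\rangle\approx 1$ but $|w|\approx 0$. Choosing $\delta=\delta(\tau)=\mu(\tau)/(pM_2)$ then cancels the bad $\|g\|_{L^p_{1/p}}^p$ term exactly, at the price of a factor $\delta(\tau)^{-\kappa}\sim\tau^{\kappa}$ in front of $u^{1-\theta}$, and one is left with $u'\leq C(1+\tau)^{\kappa}u^{1-\theta}$, which integrates to $u(\tau)\leq C'(1+\tau)^{(\kappa+1)/\theta}$ --- the desired polynomial bound. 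In short: fixed $\delta$ with loss-only dissipation gives exponential growth; $\tau$-dependent $\delta$ paired with the (decaying) drift gives polynomial growth, and only the latter is good enough.
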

\begin{proof} The proof relies on Corollary \ref{lpestau}. Multiply \eqref{cauchg} by $g^{p-1}$ and integrate over $\R^3$ to obtain
\begin{multline}\label{gtaup}
\dfrac{1}{p}\dfrac{\d \left\|g(\tau)\right\|^{p}_{L^p}}{\d\tau}+3\left(1-\frac{1}{p}\right)\xi(\tau)\left\|g(\tau)\right\|^{p}_{L^{p}}\\=\int_{\R^3} \Q^+_{\widetilde{e}_\tau}(g,g) g^{p-1} \d w - \int_{\R^3}\Q^-(g,g) g^{p-1} \d w.
\end{multline}
From Jensen's equality, one has
\begin{equation}\label{Q-inf}
\int_{\R^3}\Q^-(g,g) g^{p-1} \d w \geq \IR g^p(\tau,w)|w|\d w \qquad \forall \tau \geq 0.
\end{equation}
According to Corollary \ref{lpestau} there exist $\kappa >0$, $\theta \in (0,1)$ and a constant $K >0$ that does not depend on $\tau$ such that
\begin{equation*}
\int_{\R^3} \Q_{\widetilde{e}_\tau}(g,g) \, g^{p-1}  \d\w \le
K \delta^{-\kappa}\|g(\tau)\|_{L^1} ^{1+p\theta} \, \|g(\tau)\|_{L^p} ^{p(1-\theta)} + \delta \, \|g(\tau)\|_{L^1_{2}} \, \|g(\tau)\|_{L^p _{1/p}} ^p, \quad \forall \delta >0.
\end{equation*}
From conservation of mass $\|g(\tau)\|\equiv1$, furthermore, $M_2:=\sup_{\tau \geq 0} \left\|g(\tau)\right\|_{L^1_{2}} <\infty$ from \eqref{halfgtau}.  Thus, using \eqref{gtaup} and \eqref{Q-inf},
\begin{multline}\label{gtaup2}
 \dfrac{\d \left\|g(\tau)\right\|^{p}_{L^p}}{\d\tau} \leq p\,K \delta^{-\kappa} \, \|g(\tau)\|_{L^p} ^{p(1-\theta)} +\,p\,M_2\,\delta\,\|g(\tau)\|_{L^p_{1/p}}^p -\mu(\tau) \|g(\tau)\|_{L^p _{1/p}} ^p
\end{multline}
where $\mu(\tau)=\min\left(1,3(p-1)\xi(\tau)\right)$. Since $\xi(\tau) \to 0$ as $\tau \to \infty$ for $\gamma >0$, there exists $\tau_0 >0$ such that
$$
\mu(\tau)=3(p-1)\xi(\tau)=\frac{3(p-1)}{\gamma \tau+1+\gamma} \quad \text{ for any } \tau \geq \tau_0.
$$
Choosing $\delta=\mu(\tau)/(pM_2)$ in \eqref{gtaup2} we get
$$
\dfrac{\d \left\|g(\tau)\right\|^{p}_{L^p}}{\d\tau} \leq p\,K\, (pM_2)^\kappa \mu(\tau)^{-\kappa} \|g(\tau)\|_{L^p} ^{p(1-\theta)} \leq C (\gamma \tau + 1+\gamma)^\kappa \|g(\tau)\|_{L^p} ^{p(1-\theta)} \quad \forall \tau \geq \tau_0
$$
for some positive constant $C>0$.  Integrating the above estimate, we conclude the existence of some constant $C_0 >0$ such that
$$\|g(\tau)\|^{p}_{L^p} \leq C_0 \left(\gamma \tau + 1+\gamma\right)^{\tfrac{\kappa+1}{\theta}} \qquad \forall \tau \geq \tau_0,$$
and \eqref{gp} readily follows.\medskip

\noindent Regarding estimate \eqref{Thettau} note that for any $R >0$,
\begin{equation*}
\begin{split}
\mathbf{\Theta}(\tau)&=\int_{|w|\leq R} g(\tau,w)|w|^2\d w + \int_{|w| >R}g(\tau,w)|w|^2\d w \\ &\geq R^2 \int_{|w| > R} g(\tau,w)\d w
\geq R^2 \left(1 -\int_{|w|\leq R} g(\tau,w)|w|\d w\right) \qquad \forall \tau \geq 0,
\end{split}
\end{equation*}
From Holder's inequality,
$$\int_{|w|\leq R} g(\tau,w)|w|\d w \leq \left(\frac{4}{3}\pi R^{3}\right)^{1/p'}\|g(\tau)\|_{L^p}\ \ \mbox{with the convention}  \ \ \frac{1}{p}+\frac{1}{p'}=1.$$
Therefore, using \eqref{gp}, there exists a positive constant $C >0$ independent of $R$ such that
$$
\mathbf{\Theta}(\tau) \geq R^2\left(1-C\,R^{3/p'} (1+\tau)^{\kappa_0}\right) \qquad \forall R >0, \qquad \forall \tau \geq 0.
$$
Pick $R=R(\tau) >0$ such that $C\,R^{3/p'} (1+\tau)^{\kappa_0}=1/2$, then
$$
\mathbf{\Theta}(\tau) \geq \frac{1}{2}R^2(\tau)=\frac{1}{2}\left(\dfrac{1}{2C(1+\tau)^{\kappa_0}}\right)^{p'/3} \qquad \forall \tau \geq 0,
$$
which gives \eqref{Thettau} with $\kappa_1=p'\kappa_0/3$.
\end{proof}
The generalized Haff's law is a consequence Theorem \ref{momlam} and Proposition \ref{Lpgp}.
\begin{theo}\label{haff} Let $f_0 \geq 0$ satisfy the conditions given by \eqref{initial} with $f_0 \in L^{p_0}(\R^3)$ for some $1 < p_0 < \infty$.  In addition, assume that $e(\cdot)$ fulfills Assumptions \ref{HYP2} and  \ref{HYPdiff}.  Then, the solution $f(t,v)$ to the associated Boltzmann equation \eqref{cauch} satisfies the generalized Haff's law
\begin{equation}\label{Haff's}
c (1+t)^{-\frac{2}{1+\gamma}}  \leq \E(t) \leq  C (1+t)^{-\frac{2}{1+\gamma}}, \qquad t \geq 0
\end{equation}
where $c,C$ are positive constants depending only on $e(\cdot)$ and $\E(0)$.
\end{theo}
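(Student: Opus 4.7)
My plan is to combine the ingredients already in place. The upper bound in \eqref{Haff's} is precisely Proposition \ref{prop:cool}, so only the lower bound remains. Assume first $\gamma > 0$ (the truly non-constant case). The strategy is a two-step reduction: by Theorem \ref{momlam} it suffices to produce \emph{some} polynomial lower bound $\E(t) \geq C_0 (1+t)^{-\lambda}$ with any exponent $\lambda > 0$, which Theorem \ref{momlam} then automatically upgrades to the optimal rate $(1+t)^{-2/(1+\gamma)}$.

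To obtain the crude bound I pass to self-similar variables. With the scaling \eqref{V(t)} fixed, the identity \eqref{Eg} reads
\begin{equation*}
\E(t) = (1+t)^{-2/(1+\gamma)} \, \mathbf{\Theta}(\tau(t)),
\end{equation*}
so it is enough to show that $\mathbf{\Theta}$ admits a polynomial lower bound $\mathbf{\Theta}(\tau) \geq C_1 (1+\tau)^{-\kappa_1}$. This is exactly the second conclusion of Proposition \ref{Lpgp}, whose hypothesis $f_0 \in L^p$ for some $p \in (1,3)$ is met in our setting: if $p_0 \in (1,3)$ take $p = p_0$; otherwise interpolate $\|f_0\|_{L^p} \leq \|f_0\|_{L^1}^{\theta}\|f_0\|_{L^{p_0}}^{1-\theta}$ to place $f_0$ in any intermediate $L^p$, for instance $L^2$. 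The $L^1_2$ requirement is implied by the third-moment hypothesis in \eqref{initial}. Combining with the explicit change of time $1 + \tau(t) = \tfrac{1+\gamma}{\gamma}(1+t)^{\gamma/(1+\gamma)}$ from \eqref{exptau} then yields
\begin{equation*}
\E(t) \geq c\,(1+t)^{-\frac{2+\kappa_1 \gamma}{1+\gamma}}, \qquad t \geq 0,
\end{equation*}
which is a polynomial lower bound of the form required by Theorem \ref{momlam}. Feeding it back into that theorem closes the loop and produces \eqref{Haff's}.

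The case $\gamma = 0$ (constant restitution coefficient) is not covered by the chain above but coincides with the classical Haff's law of \cite[Theorem 1.2]{MiMo2}, as noted in the remark following Theorem \ref{momlam}. The genuine technical obstacle in the whole argument lies in Proposition \ref{Lpgp}: one must bound $\|g(\tau)\|_{L^p}$ globally in $\tau$ despite the vanishing drift $\xi(\tau) = O(1/\tau)$, which prevents direct absorption of the gain term in the functional inequality; the best one can hope for is polynomial growth, extracted from Corollary \ref{lpestau} by a careful choice of the small parameter $\delta$ in terms of $\mu(\tau)$. A simple layer-cake argument on $\mathbf{\Theta}(\tau)$ using conservation of mass and H\"older's inequality then converts this $L^p$ bound into the sought polynomial lower bound for $\mathbf{\Theta}$, and no further work is required beyond assembling Propositions \ref{prop:cool}, \ref{Lpgp} and Theorem \ref{momlam}.
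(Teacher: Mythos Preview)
Your proof is correct and follows exactly the paper's own argument: upper bound from Proposition \ref{prop:cool}, interpolation to place $f_0$ in some $L^p$ with $p\in(1,3)$, Proposition \ref{Lpgp} for a polynomial lower bound on $\mathbf{\Theta}$, translation back to $\E(t)$ via \eqref{Eg}, and Theorem \ref{momlam} to upgrade the crude exponent to the sharp one. The displayed identity $1+\tau(t)=\tfrac{1+\gamma}{\gamma}(1+t)^{\gamma/(1+\gamma)}$ is off by the additive constant $-\tfrac{1}{\gamma}$ (check $t=0$), but this is immaterial since only the estimate $1+\tau(t)\le C(1+t)^{\gamma/(1+\gamma)}$ is needed to extract the polynomial lower bound on $\E(t)$.
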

\begin{proof} The upper bound in \eqref{Haff's} has already been obtained in Theorem \ref{prop:cool}. The proof of the lower bound is a straightforward consequence of Theorem \ref{momlam} and Proposition \ref{Lpgp}.  Indeed, notice that if $f_0 \in L^1(\R^3) \cap L^{p_0}(\R^3)$ for some $1 < p_0 < \infty$, using interpolation, we may assume without loss of generally that $p_0 \in (1,3)$. Recall that for $\gamma >0$,
$$
\E(t)=V^{-2}(t)\mathbf{\Theta}(\tau(t))
$$
where $V(t)=(1+t)^{\frac{1}{1+\gamma}}$ and $\tau(t)$ is given by \eqref{exptau}. Since $\mathbf{\Theta}(\cdot)$ decays at least algebraically \eqref{Thettau}, one recognizes that there exists some constant $a>0$ such that $\E(t) \geq a\left(1+t\right)^{-\mu}$ with $\mu=\frac{2+\gamma \kappa_1}{1+\gamma}$ with $\kappa_1$ being the rate in \eqref{Thettau}.  The result follows from Theorem \ref{momlam}. The proof for $\gamma=0$ is identical.
\end{proof}
\begin{exa} For constant restitution coefficient $\gamma=0$, we recover the classical Haff's law of \cite{haff} proved recently in \cite{MiMo2}:
$$
c (1+t)^{-2}  \leq \E(t) \leq  C (1+t)^{-2}, \qquad t \geq 0.
$$
\end{exa}
\begin{exa} For viscoelastic hard-spheres given in Example \ref{exa:visco} one has $\gamma=1/5$.  Thus, Theorem \ref{haff} provides the first rigorous justification of the cooling rate conjectured in \cite{BrPo,PoSc}:
$$
c (1+t)^{-5/3}  \leq \E(t) \leq  C (1+t)^{-5/3}, \qquad t \geq 0.
$$
\end{exa}
\begin{nb} Theorem \ref{haff} shows that the decay of the temperature is governed by the behavior of the restitution coefficient $e(r)$ for small impact. The cooling of the gases is slower for larger $\gamma$.
\end{nb}

 From the explicit rate of cooling of the temperature, one deduces the algebraic decay of any moments of the solution to \eqref{cauch}. Under the assumptions of the above Theorem \ref{haff} the $p-$moment $m_p(t)$ defined in \eqref{defmp} satisfies
\begin{equation}\label{momentHaff}
 c_p (1+t)^{-\frac{2p}{1+\gamma}} \leq \E(t)^p \leq m_p(t) \leq  \tilde{C}_p \, \E(t)^p \leq C_p (1+t)^{-\frac{2p}{1+\gamma}},  \qquad t \geq 0.
 \end{equation}
The positive constants $c_p, C_p, \tilde{C}_p$ depend on $p$, $m_{p}(0)$, $\E(0)$ and $e(\cdot)$.  The lower bound is a direct consequence of Jensen's inequality and \eqref{Haff's} while the upper bound has been established in Theorem \ref{momlam}.

\subsection{Application: Propagation of Lebesgue norms} We complement Proposition \ref{Lpgp} by proving the propagation of $L^p$-norms in the range $1\leq p< 3$  for the solution $g(\tau,w)$ satisfying the rescaled equation \eqref{cauchg}.   Thus, the method introduced in the elastic case \cite{MoVi} and later used in \cite{MiMo2} for constant restitution coefficient is extended to the case of a variable restitution coefficient satisfying Assumptions \ref{HYP2} and \ref{HYPdiff}.
\begin{lemme}\label{LRV}
Assume that the initial  $f_0 \geq 0$ satisfies the  conditions given by \eqref{initial} with $f_0 \in L^{p}(\R^3)$ for some $1<p<\infty$ and let $g(\tau,\cdot)$ be the solution to the rescaled equation \eqref{cauchg} with initial datum $g(0,w)=f_0(w)$. Then, there exists a constant $\nu_0 >0$ such that
$$
\int_{\mathbb{R}^{3}}g(\tau,\wb)|w-\wb|\d\wb\geq \max\left\{\nu_0,|w|\right\}\geq\frac{\nu_0}{2}\langle w \rangle, \qquad \forall w \in \R^3, \quad \tau >0.
$$
In particular,
\begin{equation*}
\int_{\R^3}
g^{p-1}\Q^-_e(g,g)\d\w \geq \frac{\nu_0}{2}\IR g^p(\tau,w)(1+|w|^2)^{1/2}\d w=\frac{\nu_0}{2} \left\|g(\tau)\right\|_{L^p_{1/p}}^p.
\end{equation*}
\end{lemme}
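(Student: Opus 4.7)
The plan is to split the lower bound $\int g(\tau,\wb)|w-\wb|\d\wb \geq \max(\nu_0, |w|)$ into the two estimates $\geq |w|$ and $\geq \nu_0$, handled separately, and then to read off the ``In particular'' statement from the explicit form of $\Q^-_e$ for hard spheres.

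First I would obtain the bound $\int g(\tau,\wb)|w-\wb|\d\wb \geq |w|$ by Jensen's inequality. Indeed, $g(\tau,\cdot)\d\wb$ is a probability measure with zero mean (the rescaling \eqref{resca} together with \eqref{firstmom} gives $\int g(\tau,\wb)\wb\,\d\wb=0$), and $\wb \mapsto |w-\wb|$ is convex, so
$$\int g(\tau,\wb)|w-\wb|\,\d\wb \geq \left|\,w-\int g(\tau,\wb)\wb\,\d\wb\,\right|=|w|.$$

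Next I would establish a uniform-in-$\tau$ lower bound $m_{1/2}^{\sharp}(\tau):=\int g(\tau,\wb)|\wb|\,\d\wb \geq c'>0$ by interpolation between the second and third moments of $g$. By Cauchy--Schwarz,
$$\mathbf{\Theta}(\tau)=\int g(\tau,\wb)|\wb|^{1/2}\,|\wb|^{3/2}\,\d\wb \leq m_{1/2}^{\sharp}(\tau)^{1/2}\left(\int g(\tau,\wb)|\wb|^{3}\d\wb\right)^{1/2},$$
so $m_{1/2}^{\sharp}(\tau)\geq \mathbf{\Theta}(\tau)^{2}\big/\!\int g(\tau,\wb)|\wb|^{3}\d\wb$. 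The previously proved generalized Haff's law (Theorem \ref{haff}) together with the identity $\mathbf{\Theta}(\tau(t))=V(t)^{2}\E(t)$ yields a uniform lower bound $\mathbf{\Theta}(\tau)\geq c>0$; while \eqref{momentHaff} for $p=3/2$, rescaled through $\int g|\wb|^{3}\d\wb=V(t)^{3}m_{3/2}(t)$, gives a uniform upper bound $\int g|\wb|^{3}\d\wb \leq M_{3}<\infty$. Combining these I get $m_{1/2}^{\sharp}(\tau)\geq c^{2}/M_{3}=:c'$ independently of $\tau$. Then, for $|w|\leq c'/2$, the simple estimate $|w-\wb|\geq |\wb|-|w|$ gives
$$\int g(\tau,\wb)|w-\wb|\d\wb \geq m_{1/2}^{\sharp}(\tau)-|w|\geq c'/2.$$
Setting $\nu_{0}:=\min(c'/2,1)$, I obtain $\int g(\tau,\wb)|w-\wb|\d\wb \geq \max(\nu_{0},|w|)$ for all $w\in\R^{3}$ and $\tau>0$. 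The elementary inequality $\max(\nu_0,|w|)\geq \tfrac{\nu_0}{2}\langle w\rangle$ is checked by splitting into the regimes $|w|\leq 1$ (where $\langle w\rangle \leq \sqrt{2}$) and $|w|\geq 1$ (where $\langle w \rangle \leq \sqrt{2}|w|$), which requires only $\nu_0\leq \sqrt{2}$.

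For the ``In particular'' part, I use that for the hard-spheres kernel $B(u,\sigma)=|u|/(4\pi)$ the loss operator reduces to $\Q^-_e(g,g)(w)=g(\tau,w)\int g(\tau,\wb)|w-\wb|\d\wb$. Multiplying by $g^{p-1}(\tau,w)$, integrating in $w$, and applying the pointwise lower bound just established yields
$$\int_{\R^{3}}g^{p-1}\Q^-_{e}(g,g)\d w \geq \frac{\nu_{0}}{2}\int_{\R^{3}}g^{p}(\tau,w)\langle w\rangle\,\d w = \frac{\nu_{0}}{2}\|g(\tau)\|_{L^{p}_{1/p}}^{p}.$$
The only genuinely delicate point is the uniform lower bound on $m_{1/2}^{\sharp}(\tau)$; crucially, this is where the already-established Haff's law (providing a uniform lower bound on the rescaled temperature) enters, turning an otherwise $\tau$-dependent estimate into the $\tau$-independent constant $\nu_{0}$.
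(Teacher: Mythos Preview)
Your proof is correct and follows the same overall strategy as the paper: use the already-established Haff's law to get a uniform lower bound on the rescaled temperature $\mathbf{\Theta}(\tau)$, combine it with the uniform third-moment bound on $g$ to obtain $\inf_\tau \int g(\tau,\wb)|\wb|\,\d\wb>0$, and then use Jensen's inequality (with the zero-mean property of $g$) for the bound $\geq |w|$.

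The one technical difference is the way you extract the lower bound on $\int g|\wb|\,\d\wb$. The paper uses a truncation argument: pick $R$ large so that $\int_{|\wb|\le R} g|\wb|^2\,\d\wb \ge \mathbf{\Theta}_{\min}/2$ (using the $L^1_3$ bound to control the tail), and then $\int g|\wb|\,\d\wb \ge R^{-1}\int_{|\wb|\le R} g|\wb|^2\,\d\wb$. You instead use the Cauchy--Schwarz interpolation $\mathbf{\Theta}(\tau)^2 \le m_{1/2}^{\sharp}(\tau)\int g|\wb|^3\,\d\wb$. Both arguments consume exactly the same ingredients (lower bound on $\mathbf{\Theta}$, upper bound on the third moment) and are equally valid; your version is arguably a bit cleaner. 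You also spell out explicitly the triangle-inequality step $\int g|w-\wb|\,\d\wb \ge m_{1/2}^{\sharp}(\tau)-|w|$ for small $|w|$, which the paper leaves implicit in its final sentence ``Using this observation and Jensen's inequality we obtain the result.''
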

\begin{proof} The proof is a simple consequence of
$$
\mathbf{\Theta}_{\mathrm{min}}:=\inf_{\tau >0} \IR g(\tau,w)|w|^2\d w >0.
$$
Indeed, since $f_0 \in L^1_3$ the propagation of $p$-moments in the rescaled variables implies $\sup_{t\geq 0}\left\|g(\tau)\right\|_{L^{1}_{3}}<\infty$.  Then, for $R>0$ large enough
\begin{align*}
\int_{\{|w|\leq R\}}g(\tau,w)|w|^{2}\d\w&=\IR g(\tau,w)|w|^{2}\d\w-\int_{\{|w|\geq R\}}g(\tau,w)|w|^{2}\d\w\\
&\geq \mathbf{\Theta}_{\mathrm{min}} -\frac{1}{R} \sup_{\{\tau\geq0 \}}\left\|g(\tau)\right\|_{L^{1}_{3}}\geq \mathbf{\Theta}_{\mathrm{min}}/2>0.
\end{align*}
We conclude that,
$$\IR g(\tau,w)|w|\d\w \geq\frac{1}{R}\int_{\{|w|\leq R\}}g(\tau,w)|w|^{2}\d\w  \geq \dfrac{\mathbf{\Theta}_{\mathrm{min}}}{2R}=:\nu_0>0.$$
Using this observation and Jensen's inequality we obtain the result.
\end{proof}
\begin{theo}\label{Lpg}
Assume the variable restitution coefficient $e(\cdot)$ satisfy Assumptions \ref{HYP2} and \ref{HYPdiff} for some \emph{positive} $\gamma >0$. Assume that $f_0 \geq 0$ satisfies \eqref{initial} with $f_0 \in L^{1}_{2(1+\eta)}\cap L^p_{\eta}(\R^3)$ for some $1 \leq  p <3$ and $\eta \geq 0$.  Then, the rescaled solution $g(\tau,\cdot)$ to \eqref{cauchg} with initial datum $g(0,w)=f_0(w)$ satisfies
$$
\sup_{\tau \geq 0} \left\|g(\tau)\right\|_{L^p_\eta}  < \infty.
$$
In particular,
$$
\sup_{t\geq0}\left\{V(t)^{-3/p'}\left\|f(t)\right\|_{L^{p}}\right\}= \sup_{\tau \geq 0} \left\|g(\tau)\right\|_{L^p} < \infty.
$$
Recall that $V(t)=(1+t)^{\frac{1}{1+\gamma}}$.
\end{theo}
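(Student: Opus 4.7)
The strategy is to derive a coercivity differential inequality for $y(\tau):=\|g(\tau)\|_{L^p_\eta}^p$ which, combined with the lower bound on $\Q^-$ provided by Lemma \ref{LRV} (now uniform in $\tau$ thanks to Haff's law), forces uniform boundedness. Multiplying \eqref{cauchg} by $g^{p-1}\langle w\rangle^{\eta p}$, integrating over $\R^3$ and performing an integration by parts in the transport term yields
\begin{equation*}
\tfrac{1}{p}\tfrac{\d}{\d\tau}\|g(\tau)\|_{L^p_\eta}^p = -\xi(\tau)\left[\tfrac{3(p-1)}{p}\|g\|_{L^p_\eta}^p - \eta\int_{\R^3} g^p\langle w\rangle^{\eta p -2}|w|^2\d w\right] + \int_{\R^3} \Q_{\widetilde{e}_\tau}(g,g)\,g^{p-1}\langle w\rangle^{\eta p}\d w.
\end{equation*}
The bracket is bounded in absolute value by a multiple of $\|g\|_{L^p_\eta}^p$, and since $\xi(\tau)\to 0$ by \eqref{xitau}, the transport contribution on the right-hand side is at worst $C\xi(\tau)\|g\|_{L^p_\eta}^p$ with $C$ depending only on $p$ and $\eta$.

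For the collision integral, Corollary \ref{lpestau} provides, with constants independent of $\tau$,
\begin{equation*}
\int_{\R^3}\Q^+_{\widetilde{e}_\tau}(g,g)\,g^{p-1}\langle w\rangle^{\eta p}\d w \le K\delta^{-\kappa}\|g\|_{L^1_\eta}^{1+p\theta}\|g\|_{L^p_\eta}^{p(1-\theta)} + \delta\|g\|_{L^1_{2+\eta}}\|g\|_{L^p_{\eta+1/p}}^p,
\end{equation*}
while repeating the argument of Lemma \ref{LRV} with the weight $\langle w\rangle^{\eta p}$ gives
\begin{equation*}
\int_{\R^3}\Q^-(g,g)\,g^{p-1}\langle w\rangle^{\eta p}\d w \ge \tfrac{\nu_0}{2}\,\|g(\tau)\|_{L^p_{\eta+1/p}}^p.
\end{equation*}
The crucial point is that $\nu_0>0$ can be chosen independently of $\tau$: this uses the uniform lower bound $\inf_{\tau\ge 0}\mathbf{\Theta}(\tau)>0$, which is exactly the rescaled form of the lower bound in Haff's law \eqref{Haff's} via \eqref{Eg}, combined with a uniform bound on $\|g(\tau)\|_{L^1_3}$ as in Lemma \ref{LRV}.

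To close the argument, I need $M := \sup_{\tau\ge 0}\|g(\tau)\|_{L^1_{2+\eta}}<\infty$. By \eqref{resca}--\eqref{V(t)} one has $\int g(\tau(t),w)|w|^{2q}\d w = V(t)^{2q}\,m_q(t)$; the upper estimate in \eqref{momentHaff} with $q=1+\eta$ (controlled by the hypothesis $f_0\in L^1_{2(1+\eta)}$) then yields $\sup_\tau\int g(\tau,w)|w|^{2(1+\eta)}\d w<\infty$, and the trivial bound $|w|^{2+\eta}\le 1+|w|^{2(1+\eta)}$ gives $M<\infty$. Choosing $\delta=\nu_0/(4M)$, using $\|g\|_{L^p_{\eta+1/p}}^p\ge\|g\|_{L^p_\eta}^p$ and the mass conservation $\|g(\tau)\|_{L^1}=1$, all previous inequalities combine into
\begin{equation*}
\tfrac{\d}{\d\tau} y(\tau) \le p\bigl(C\xi(\tau)-\tfrac{\nu_0}{8}\bigr)y(\tau) + p A\,y(\tau)^{1-\theta} \qquad \text{for }\tau\ge \tau_0,
\end{equation*}
where $\tau_0$ is chosen so that $C\xi(\tau)\le\nu_0/8$ on $[\tau_0,\infty)$ (possible because $\xi(\tau)\to 0$) and $A>0$ depends only on the data. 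This ODE inequality forces $y(\tau)\le\max\{y(\tau_0),(8A/\nu_0)^{1/\theta}\}$ for $\tau\ge\tau_0$; on $[0,\tau_0]$ the local-in-$\tau$ propagation of $L^p_\eta$ norms (obtained exactly as in the proof of Proposition \ref{Lpgp}) shows that $y$ remains finite, yielding the uniform bound. The statement on $\|f(t)\|_{L^p}$ then follows from the pointwise identity $\|f(t)\|_{L^p}=V(t)^{3/p'}\|g(\tau(t))\|_{L^p}$ that is immediate from the rescaling \eqref{resca}.

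The main obstacle is not any single step but rather the requirement that every constant entering the estimate be uniform in the rescaled time $\tau$: the gain bound from Corollary \ref{lpestau}, the coercivity constant $\nu_0$ from Lemma \ref{LRV}, and the $L^1_{2+\eta}$-moment bound $M$. The first is built into Corollary \ref{lpestau} by construction; the latter two rely essentially on the generalized Haff's law already established in Theorem \ref{haff}. Without this prior input, the coercive term on the right-hand side could not be extracted as a genuine constant and the dissipation mechanism would collapse.
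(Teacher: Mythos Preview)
Your proof is correct and follows essentially the same route as the paper: multiply \eqref{cauchg} by $g^{p-1}\langle w\rangle^{\eta p}$, bound the gain term via Corollary \ref{lpestau}, extract the uniform coercivity $\tfrac{\nu_0}{2}\|g\|_{L^p_{\eta+1/p}}^p$ from the loss via Lemma \ref{LRV}, absorb the $\delta$-term by choosing $\delta$ proportional to $\nu_0/M$ with $M=\sup_\tau\|g(\tau)\|_{L^1_{2+\eta}}$, and close with the ODE inequality using $\xi(\tau)\to 0$. You are somewhat more explicit than the paper in justifying the uniformity of $\nu_0$ and $M$ through Haff's law and \eqref{momentHaff}, and in handling the finite interval $[0,\tau_0]$; the paper compresses all of this into the single sentence ``$\xi(\tau)$ decreases toward zero, thus \eqref{pds} leads to the result.''
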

\begin{proof}
Multiplying equation \ref{Lpgp} by $g^{p-1}(\tau,w)\left\langle w\right\rangle^{\eta p}$ and integrating over $\R^3$ yields
\begin{multline*}
\dfrac{1}{p}\dfrac{\d \left\|g(\tau)\right\|^{p}_{L^p_\eta}}{\d\tau}+3\left(1-\frac{1}{p}\right)\xi(\tau)\left\|g\right\|^{p}_{L^{p}_\eta}=\int_{\R^3} \Q^+_{\widetilde{e}_\tau}(g,g) g^{p-1} \left\langle w\right\rangle^{\eta p}\d w -\\ \int_{\R^3}\Q^-(g,g) g^{p-1} \left\langle w\right\rangle^{\eta p}\d w+\eta\xi(\tau)\int_{\R^3}g^{p}(\tau,w)|w|^{2}\left\langle w\right\rangle^{\eta p-2}\d w.
\end{multline*}
Using Lemma \ref{LRV} one has
$$
\int_{\R^3}\Q^-(g,g) g^{p-1} \left\langle w\right\rangle^{\eta p}\d w \geq \dfrac{\nu_0}{2}\|g(\tau)\|_{L^p_{\eta+1/p}}^p.
$$
Moreover, $ C_\eta =\sup_{\tau \geq 0} \left\|g(\tau)\right\|_{L^1_{2+\eta}} <\infty$ by virtue of the propagation of moments in self-similar variables \eqref{momentHaff}. Applying Corollary \ref{lpestau} with $\delta=\frac{\nu_0}{4C}$,
\begin{multline}\label{pds}
\dfrac{1}{p}\dfrac{\d }{\d \tau}\left\|g(\tau)\right\|^{p}_{L^p_\eta}+\frac{\nu_0}{4}\left\|g(\tau)\right\|^{p}_{L^{p}_{\eta+1/p}}\\ \leq K \left\|g(\tau)\right\|^{p(1-\theta)}_{L^{p}_\eta}+\xi(\tau)\left(\eta-\frac{3}{p'}\right)\left\|g(\tau)\right\|_{L^p_\eta}^p \quad\forall\tau>0
\end{multline}
for some uniform constant $K$. Since $\gamma >0$, the mapping $\xi(\tau)$ decreases toward zero, thus \eqref{pds} leads to the result.
\end{proof}
\begin{nb} We refer to \cite[Theorem 1.3]{MiMo2} for a proof of the case $\gamma=0$. Furthermore, additional pointwise estimates allow to extend the above result to $p \geq 3$ assuming higher moments for $f_0$. We refer the reader to \cite{AloLo} for further similar estimates.
\end{nb}
\section{High-energy tails for the self-similar solution}\label{tails}

We finalize this work studying the high-energy tails of $f(t,v)$ of the solution to \eqref{be:force}.  For models with variable restitution coefficient the high energy tail is dynamic since gas changes its behavior during the cooling process.  This is noted with a dynamic rate in the tail.  Here again, we shall deal with the generalized hard-spheres collision kernel
$$
B(u,\sigma)=|u|b(\widehat{u}\cdot \sigma)
$$
where $b(\cdot)$ satisfies \eqref{normalization}.  We argue in the self-similar variables, thus it is convenient to define the rescaled $p$--moments
$$
\mathbf{m}_p(\tau)=\IR g(\tau,w)\,|w|^{2p}\d w,\ \ \ p\geq0.
$$
Notice that (\ref{momentHaff}) readily translates into
\begin{equation}\label{momentself}
c_p\leq \mathbf{m}_p(\tau) \leq C_p \quad \mbox{for}\ \ \tau\geq0.
\end{equation}
The following Theorem generalizes \cite[Proposition 3.1]{MiMo2} to the case of a variable restitution coefficient.
\begin{theo}[\textbf{$L^{1}$-exponential tails Theorem}]\label{intexpbounds}
Let $B(u,\sigma)=|u|b(\us)$ satisfy \eqref{normalization} with $b\in L^{q}(\mathbb{S}^{2})$ for some $q>1$.  Assume that $e(\cdot)$ and $f_0$ fulfill Assumptions \ref{HYP2} and \eqref{initial} respectively.  Furthermore, assume that there exists $r_0 >0$ such that
\begin{equation*}
\IR f_0(v)\exp\left(r_0|v|\right) \d v <\infty.
\end{equation*}
Let $g(\tau,w)$ be the rescaled solution defined by \eqref{resca}. Then, there exists some $r\leq r_0$ such that
\begin{equation}\label{expbounds}
\sup_{\tau\geq0}\IR g(\tau,w)\exp\left(r|w|\right) \d w < \infty.
\end{equation}
Consequently,
\begin{equation}\label{expboundssol}
\sup_{t\geq 0}\IR f(t,v)\exp\left(rV(t)|v|\right)\d w < \infty.
\end{equation}
\end{theo}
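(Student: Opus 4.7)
The plan is to work entirely in the self-similar variables and establish \eqref{expbounds}, from which \eqref{expboundssol} follows immediately via the change of variables
\begin{equation*}
\IR f(t,v)\exp\bigl(rV(t)|v|\bigr)\d v=\IR g\bigl(\tau(t),w\bigr)\exp(r|w|)\d w.
\end{equation*}
The starting point is an ODE for the rescaled moments $\mathbf{m}_p(\tau)$. Testing the rescaled Boltzmann equation \eqref{eqgt} (with $\lambda(\tau)\equiv 1$) against $|w|^{2p}$ and integrating by parts in the drift term yields
\begin{equation*}
\frac{\d}{\d\tau}\mathbf{m}_p(\tau)=2p\,\xi(\tau)\mathbf{m}_p(\tau)+\IR \Q_{\widetilde{e}_\tau}(g,g)(\tau,w)\,|w|^{2p}\d w.
\end{equation*}
The crucial point is that Proposition \ref{povzner} applies uniformly in $\tau$ to $\Q_{\widetilde{e}_\tau}$, since the constant $\kappa_p$ from Lemma \ref{gamm} depends only on the angular kernel $b(\cdot)$ and not on the restitution coefficient. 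Hence
\begin{equation*}
\frac{\d}{\d\tau}\mathbf{m}_p(\tau)\le -(1-\kappa_p)\,\mathbf{m}_{p+1/2}(\tau)+\kappa_p\,S_p(\tau)+2p\,\xi(\tau)\mathbf{m}_p(\tau),\qquad \tau\ge 0.
\end{equation*}

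The extra drift source $2p\,\xi(\tau)\mathbf{m}_p$, which is not present in the setting of \cite{BoGaPa}, is dealt with as follows. Since $\xi(\tau)\leq 1/(1+\gamma)$ for every $\tau\ge 0$ and Jensen's inequality against the probability density $g(\tau,\cdot)\d w$ gives $\mathbf{m}_{p+1/2}(\tau)\ge \mathbf{m}_p(\tau)^{1+1/(2p)}$, Young's inequality absorbs this source into a fraction of the damping at the cost of an additive constant $C_p$ of the form $(cp)^{2p}$. This leads to
\begin{equation*}
\frac{\d}{\d\tau}\mathbf{m}_p(\tau)\le -\frac{1-\kappa_p}{2}\,\mathbf{m}_{p+1/2}(\tau)+\kappa_p\,S_p(\tau)+C_p,\qquad \tau\ge 0.
\end{equation*}
All initial moments $\mathbf{m}_p(0)=\IR f_0(v)|v|^{2p}\d v$ are finite thanks to the exponential integrability of $f_0$.

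From this point on the argument is a direct adaptation of the exponential tail strategy of \cite{BoGaPa}. One introduces the exponential generating functions
\begin{equation*}
\mathcal{E}_r(\tau):=\sum_{p=0}^{\infty}\frac{r^{2p}}{\Gamma(2p+1)}\mathbf{m}_p(\tau),\qquad \widetilde{\mathcal{E}}_r(\tau):=\sum_{p=0}^{\infty}\frac{r^{2p}}{\Gamma(2p+1)}\mathbf{m}_{p+1/2}(\tau),
\end{equation*}
multiplies the moment inequality by $r^{2p}/\Gamma(2p+1)$ and sums over $p$. The combinatorial lemma of \cite[Lemma 4]{BoGaPa} rearranges $\sum_{p}\frac{r^{2p}}{\Gamma(2p+1)}S_p(\tau)$ into a quantity controlled by $\mathcal{E}_r(\tau)\,\widetilde{\mathcal{E}}_r(\tau)$, while the factorial growth of $\Gamma(2p+1)$ absorbs the constants $C_p$ provided $r$ is small enough. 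Exploiting item (3) of Lemma \ref{gamm}, namely $\kappa_p\lesssim p^{-1/q'}$, one reaches a differential inequality of the form
\begin{equation*}
\frac{\d}{\d\tau}\mathcal{E}_r(\tau)\le -\tfrac{1}{2}\widetilde{\mathcal{E}}_r(\tau)+\kappa(r)\,\mathcal{E}_r(\tau)\,\widetilde{\mathcal{E}}_r(\tau)+C(r),
\end{equation*}
with $\kappa(r)\to 0$ as $r\to 0$ and $C(r)<\infty$. A bootstrap argument, starting from $\mathcal{E}_{r_0}(0)<\infty$, produces some $r\leq r_0$ small enough that the condition $\kappa(r)\mathcal{E}_r(\tau)\le 1/2$ propagates in time, which gives $\sup_{\tau\ge 0}\mathcal{E}_r(\tau)<\infty$, i.e.\ \eqref{expbounds}. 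The main obstacle is this combinatorial rearrangement and the delicate balance between damping and production; by contrast, the adaptation to our $\tau$-dependent collision operator is essentially cosmetic once the uniformity of $\kappa_p$ in $\tau$ has been noted.
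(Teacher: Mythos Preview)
Your proposal and the paper's proof share the same starting point---the uniform (in $\tau$) Povzner estimate of Proposition~\ref{povzner} applied to $\Q_{\widetilde{e}_\tau}$, followed by the moment ODE with the extra drift source $2p\,\xi(\tau)\mathbf{m}_p$---but they diverge in how the exponential tail is extracted from that ODE.

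The paper does \emph{not} pass to generating functions. Following \cite{BoGaPa} verbatim, it introduces renormalized moments $z_p(\tau)=\mathbf{m}_p(\tau)/\Gamma(2p+b)$, uses the estimate $S_p(\tau)\le A\,\Gamma(2p+1+2b)\,\mathcal{Z}_p(\tau)$ (this is what \cite[Lemma~4]{BoGaPa} actually says---it is a pointwise bound on $S_p$, not a statement about sums of moments), and derives a scalar differential inequality for each $z_p$. The drift term is \emph{not} absorbed by Young's inequality; it is simply bounded by $2p\,z_p$ via $\xi(\tau)\le 1$ and then beaten by the damping $c_1\,p\,z_p^{1+1/(2p)}$ in the ODE comparison, provided the comparison level $Q$ is taken large enough (concretely $Q\ge 16/c_1^2$). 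An induction on $p$ together with a barrier argument then gives $z_p(\tau)\le Q^p$ for all $p$ and all $\tau$, which is exactly the tail bound with $r=Q^{-1/2}$.

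Your route---absorb the drift via Young's inequality at the cost of an additive $(cp)^{2p}$, then sum against $r^{2p}/\Gamma(2p+1)$ and close a differential inequality for $\mathcal{E}_r$---is in the spirit of the later generating-function arguments (e.g.\ Alonso--Ca\~nizo--Gamba--Mouhot) and can be made to work, but two points deserve care. First, \cite[Lemma~4]{BoGaPa} is not the tool for the step you describe: it does not rearrange $\sum_p \frac{r^{2p}}{\Gamma(2p+1)}S_p$ into $\mathcal{E}_r\widetilde{\mathcal{E}}_r$; the binomial weights in $S_p$ and the Gamma weights $\Gamma(2p+1)$ do not combine into a straight Cauchy product, and one needs a separate Beta-function or Mittag-Leffler type argument to control that sum. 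Second, your handling of the drift introduces the additive constants $C_p\sim (cp)^{2p}$, which forces an extra smallness condition on $r$; the paper's comparison argument avoids this entirely and is both shorter and sharper for the present purpose. In summary, your outline is a viable alternative, but the combinatorial step is not the one you cite, and the paper's induction/comparison approach treats the anti-drift more economically.
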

\begin{proof}
The method of proof is carefully documented in \cite{AloGam,BoGaPa}.  We sketch the proof dividing the argument in 5 steps.\medskip

\indent \textit{Step 1.} Note that formally
\begin{equation*}
\IR g(\tau,w)\exp\left(r|w|^{s}\right)\d w=\sum^{\infty}_{k=0}\frac{r^{k}}{k!}\mathbf{m}_{sk/2}(\tau),
\end{equation*}
for any $r >0$ and any $s >0.$ Hence, the summability of the integral is described by the behavior of the functions $\frac{\mathbf{m}_{sk/2}(\tau)}{k!}$.  This motivates the introduction of the renormalized moments
$${z}_p(\tau):=\frac{\mathbf{m}_p(\tau)}{\Gamma(ap+b)}, \ \ \mbox{with}\ \ a=2/s,$$
where $\Gamma(\cdot)$ denotes the Gamma function. We shall prove that the series converges for some $r < r_0$ and with $s=1$ (i.e. $a=2$). To do so, it is enough to prove that, for some $b < 1$ and $Q >0$ large enough, one has $z_p(\tau)\leq Q^p$ for any $p \geq 1$ and any $\tau \geq0$.\medskip

\indent \textit{Step 2.} Recall that, according to Lemma \ref{gamm}, the estimates of Proposition \ref{povzner} are independent of the restitution coefficient $e(\cdot)$. In particular, they  hold for the time-dependent collision operator $\Q_{\widetilde{e}_\tau}$ providing bounds which are \textit{uniform} with respect to $\tau$. Specifically,
\begin{equation*}
\IR \Q_{\widetilde{e}_\tau}(g,g)(\tau, w)|w|^{2p}\d w\leq-(1-\kappa_{p})\mathbf{m}_{p+1/2}(\tau) +\kappa_{p}\; \mathcal{S }_{p}(\tau), \qquad \forall \tau \geq 0
\end{equation*}
where $\kappa_p$ is the constant introduced in Lemma \ref{gamm} and \begin{equation*}
\mathcal{S}_{p}(\tau)=\sum^{[\frac{p+1}{2}]}_{k=1}\left(
\begin{array}{c}
p\\k
\end{array}
\right)\left(\mathbf{m}_{k+1/2}(\tau)\;\mathbf{m}_{p-k}(\tau)+\mathbf{m}_{k}(\tau)\;\mathbf{m}_{p-k+1/2}(\tau)\right).
\end{equation*}
\indent \textit{Step 3.}  An important simplification, first observed in \cite{BoGaPa}, consists in noticing that the term $\mathcal{S}_p$ satisfies
\begin{equation*}
\mathcal{S}_{p}(\tau)\leq A\;\Gamma(ap+a/2+2b)\;\mathcal{Z}_{p}(\tau)\ \ \mbox{for}\ \ \ a\geq1,\;b>0,
\end{equation*}
where $A=A(a,b) >0$ does not depend on $p$ and
\begin{equation*}
\mathcal{Z}_p(\tau)=\max_{1\leq k\leq k_p}\left\{z_{k+1/2}(\tau)\;z_{p-k}(\tau),z_k(\tau)\;z_{p-k+1/2}(\tau)\right\}.
\end{equation*}
With such an estimate, the rather involved term $\mathcal{S}_p$ is more tractable.\medskip

\indent \textit{Step 4.} Using the above steps and the evolution problem \eqref{cauchg} satisfied by the rescaled solution $g$, we check that
\begin{equation*}
\frac{\d \mathbf{m}_p}{\d\tau}(\tau)+(1-\kappa_p)\mathbf{m}_{p+1/2}(\tau)\leq\kappa_p\;\Gamma(ap+a/2+2b)\mathcal{Z}_p(\tau)+2p\,\xi(\tau)\mathbf{m}_p(\tau)
\end{equation*}
where we used the fact that
$$
\IR |w|^{2p}\nabla_w \cdot (w g (\tau,w)) \d w=-2p\,\mathbf{m}_p(\tau).
$$
Using the asymptotic formula
$$
\lim_{p\rightarrow\infty}\frac{\Gamma(p+r)}{\Gamma(p+s)}p^{s-r}=1,
$$
the fact that $\xi(\tau)\leq1$ and $\kappa_p\sim1/p^{1/q'}$ for large $p$, one concludes that there are constants $c_{i}>0$ ($i=1,2$) and $p_0 >1$ sufficiently large so that
\begin{equation*}
\frac{\d z_p}{\d\tau}(\tau)+c_1\;p^{a/2}z^{1+1/2p}_{p}(\tau)\leq c_2\;p^{a/2+b-1/q'}\;\mathcal{Z}_p(\tau)+2p\;z_p(\tau) \qquad \forall \tau \geq 0, \:p \geq p_0.
\end{equation*}
We also used that $\mathbf{m}_{p+1/2}(\tau) \geq \mathbf{m}_p^{1+1/2p}(\tau)$ for any $\tau \geq 0 $ thanks to Jensen's inequality.\medskip

\indent \textit{Final step.} We claim that if we choose $a=2$ and $0<b<1/q'$ it is possible to find $Q>0$ large enough so that $\mathbf{m}_p(\tau)\leq Q^{p}$.  Indeed, let $p_0$ and $Q<\infty$ such that
\begin{equation*}
\frac{c_2}{c_1}p^{b-1/q'}_0\leq\frac{1}{2},\ \ \mbox{and} \ \ Q\geq\left\{\max_{1\leq k\leq p_0}\sup_{\tau\geq0} z_k(\tau),Q_0,\frac{16}{c^{2}_1},1\right\},
\end{equation*}
where $Q_0$ is a constant such that $z_p(0)\leq Q_0^{p}$.  This constant exists by the exponential integrability assumption on the initial datum.  Moreover, since moments of $g$ are uniformly propagated, the existence of such \textit{finite} $Q$ is guaranteed. Arguing by induction and standard comparison of ODE's, one proves that  $y_p(\tau):=Q^{p}$ satisfies for $p\geq p_0$
\begin{equation*}
\frac{\d y_p}{\d\tau}(\tau)+c_1\;p^{a/2}y_p^{1+1/2p}(\tau)\geq c_2\;p^{a/2+b-1/q'}\;\mathcal{Z}_p(\tau)+2p\;y_p(\tau), \quad y_p(0)\geq z_p(0)
\end{equation*}
therefore, $y_p(\tau)\geq z_p(\tau)$ for any $p\geq p_0.$ Since this is trivially true for $p<p_0$ we obtain that
$$\mathbf{m}_p(\tau) \leq \Gamma(2p+b) Q^p, \qquad \forall p \geq 1, \tau \geq 0.$$
From Step 1, this is enough to prove the Theorem.\end{proof}
\begin{exa} For viscoelastic hard-spheres $V(t)=(1+t)^{5/3}$. Therefore,
\begin{equation*}
\IR f_0(v)\exp\left(r_0|v|\right) \d v <\infty \Longrightarrow \sup_{t\geq 0}\int_{\R^3}f(t,v)\exp\left(r(1+t)^{5/3}|v|\right)\d v< \infty
\end{equation*}
for some  $r < r_0$.
In particular, using the terminology of \cite{BoGaPa}, $f(t,v)$ has a (dynamic) exponential tail of order 1.
\end{exa}

\section*{Appendix A: Viscoelastic hard-spheres}\setcounter{equation}{0}
\renewcommand{\theequation}{A.\arabic{equation}}

In this Appendix we prove that Assumptions \ref{HYP2} are met by the restitution coefficient $e(\cdot)$ associated to the so-called viscoelastic hard-spheres as derived in \cite{PoSc} (see also \cite[Chapter 4]{BrPo}). In fact, we prove a more general result for the hard-spheres collision kernel
$$
B(u,\sigma)=\dfrac{|u|}{4\pi} \qquad \forall u \in \R^3,\:\sigma \in \mathbb{S}^2.
$$
Recall that $\mathbf{\Psi}_e$ was defined in \eqref{Psie} as
$$\mathbf{\Psi}_e(x)=\dfrac{1}{2\sqrt{x}}\int_0^{\sqrt{x}} \left(1-e(z)^2\right)z^3\d z, \qquad x >0.
$$
\begin{lemmeA}\label{decreas} Assume that $e(\cdot)$ satisfies Assumption \ref{HYP} and that the mapping $r \geq 0 \mapsto e(r)$ is decreasing. Then, the associated function $\mathbf{\Psi}_e$ defined in \eqref{Psie} is strictly increasing and convex.
\end{lemmeA}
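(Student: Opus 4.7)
The plan is to reduce $\mathbf{\Psi}_e$ to a single integral in a more convenient variable, then differentiate twice and read off the sign of each derivative from the monotonicity/positivity of the integrand.

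First I would perform the substitution $u=z^2$ in the defining integral
$$\mathbf{\Psi}_e(x)=\frac{1}{2\sqrt{x}}\int_0^{\sqrt{x}}\bigl(1-e(z)^2\bigr)z^3\,\mathrm{d}z$$
to obtain the cleaner expression
$$\mathbf{\Psi}_e(x)=\frac{1}{4\sqrt{x}}\,I(x),\qquad I(x):=\int_0^{x} u\,\widetilde{\phi}(u)\,\mathrm{d}u,$$
where $\widetilde{\phi}(u):=1-e(\sqrt{u})^2$. Under the hypotheses, $e$ is absolutely continuous, decreasing, and lies in $(0,1]$, so $\widetilde{\phi}$ is absolutely continuous, nonnegative and nondecreasing on $[0,\infty)$; moreover Assumption \ref{HYP}(2) prevents $e\equiv 1$ on any interval (otherwise $re(r)$ could not be strictly increasing while $e$ is decreasing away from $1$), so $\widetilde{\phi}(u)>0$ for $u$ large enough.

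Next I would differentiate once. Using $I'(x)=x\widetilde{\phi}(x)$,
$$\mathbf{\Psi}_e'(x)=\frac{\sqrt{x}\,\widetilde{\phi}(x)}{4}-\frac{I(x)}{8\,x^{3/2}}=\frac{1}{8\,x^{3/2}}\Bigl[2x^{2}\widetilde{\phi}(x)-I(x)\Bigr].$$
Since $\widetilde{\phi}$ is nondecreasing,
$$I(x)=\int_0^{x} u\,\widetilde{\phi}(u)\,\mathrm{d}u\le \widetilde{\phi}(x)\int_0^{x} u\,\mathrm{d}u=\tfrac{1}{2}x^{2}\widetilde{\phi}(x),$$
so the bracket is $\ge \tfrac{3}{2}x^{2}\widetilde{\phi}(x)>0$ as soon as $\widetilde{\phi}(x)>0$, which gives strict monotonicity of $\mathbf{\Psi}_e$ on $(0,\infty)$.

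Then I would differentiate once more. The term $-\tfrac{I(x)}{8x^{3/2}}$ produces, after applying $I'(x)=x\widetilde{\phi}(x)$, a contribution that cancels the derivative of $\tfrac{\sqrt{x}\,\widetilde{\phi}(x)}{4}$ coming from the $\sqrt{x}$-factor. A short computation then gives
$$\mathbf{\Psi}_e''(x)=\frac{\sqrt{x}\,\widetilde{\phi}'(x)}{4}+\frac{3\,I(x)}{16\,x^{5/2}}.$$
Both summands are nonnegative: $\widetilde{\phi}'\ge 0$ a.e.\ because $\widetilde{\phi}$ is nondecreasing and absolutely continuous, and $I(x)\ge 0$ by positivity of the integrand. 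Strict convexity follows because $I(x)>0$ for $x>0$ small enough that $\widetilde{\phi}>0$ on a subinterval of $(0,x]$, hence $\mathbf{\Psi}_e''>0$ on $(0,\infty)$.

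The main obstacle is mostly bookkeeping: making sure the formulas for $\mathbf{\Psi}_e'$ and $\mathbf{\Psi}_e''$ are justified despite $\widetilde{\phi}$ being only absolutely continuous (so $\widetilde{\phi}'$ exists only a.e.). This is handled by writing the derivatives in the distributional/a.e.\ sense, invoking the Lebesgue differentiation theorem for $I$, and using that $\widetilde{\phi}$ is the integral of $\widetilde{\phi}'$, so the cancellation in the second derivative is exact a.e. Once these regularity issues are settled, strict monotonicity and strict convexity of $\mathbf{\Psi}_e$ follow directly from the nonnegativity of each piece.
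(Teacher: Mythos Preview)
Your approach is essentially the same as the paper's: both compute $\mathbf{\Psi}_e'$ and $\mathbf{\Psi}_e''$ explicitly and read off nonnegativity from $e'\le 0$ and $0<e\le 1$. Your substitution $u=z^2$ is a cosmetic repackaging of the paper's device of writing $\mathbf{\Psi}_e(x)=\tfrac12\Phi(\sqrt{x})$ with $\Phi(x)=x^{-1}\int_0^x(1-e^2)z^3\,\mathrm{d}z$; the resulting formulas and the sign argument are identical.

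One small correction: your claim that Assumption~\ref{HYP}(2) forbids $e\equiv 1$ on an interval is false. If $e\equiv 1$ on $[0,a]$ then $\vartheta(r)=r$ there, which is perfectly strictly increasing. So your justification for $\widetilde{\phi}(u)>0$ does not come from Assumption~\ref{HYP}(2). In fact, under only the stated hypotheses (Assumption~\ref{HYP} plus $e$ nonincreasing), one cannot exclude $e\equiv 1$ on $[0,a]$, and on that interval $\mathbf{\Psi}_e\equiv 0$, so strict monotonicity fails there. The paper's own proof has the same gap; in context it is harmless because the lemma is applied to restitution coefficients satisfying Assumption~\ref{HYP2}(1), which forces $e(r)<1$ for $r>0$ and hence $\widetilde{\phi}(u)>0$ for all $u>0$. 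Once you assume that, both your bracket $\tfrac32 x^2\widetilde{\phi}(x)>0$ and your second-derivative term $\tfrac{3I(x)}{16x^{5/2}}>0$ are genuinely strict, and your argument goes through cleanly.
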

\begin{proof} Since $e$ is decreasing, $e'(r) \leq 0$ for any $r \geq 0$.  Here $e'(\cdot)$ denotes the derivative of $e(\cdot)$. Define
$$
\Phi(x):=\dfrac{1}{x}\int_0^{x} \left(1-e^2(z)\right)z^3\d z, \qquad x >0.
$$
Note that $\mathbf{\Psi}_e(\cdot)$ is convex if and only if $x \Phi_{xx}(x) -\Phi_x (x) \geq 0$ for any $x >0$ where $\Phi_{x}$ and $\Phi_{xx}$ denote the first and second derivatives of $\Phi$ respectively. A simple calculation shows that
$$
x \Phi_{xx}(x) -\Phi_x (x)=-2 x^3 e'(x)e(x) + \dfrac{3}{x^2}\int_0^x (1-e^2(z))z^3 \d z, \qquad \forall x >0.
$$
Since $e'(x) \leq 0$ and $e(\cdot) \in (0,1]$ one concludes that $x \Phi_{xx}(x) -\Phi_x (x) \geq 0$ for any $x >0$.\medskip

Similarly, since $e'(\cdot) \leq 0$ the mapping $z \geq 0 \mapsto (1-e^2(z))z^3$ is nondecreasing, thus, $\Phi_x (x) > 0$ for any $x >0$. This implies that $\mathbf{\Psi}_e(\cdot)$ is strictly increasing over $(0,+\infty)$.
\end{proof}

For the visco-elastic hard-spheres, as derived in \cite{PoSc}, the restitution coefficient $e$ is solution of the equation
\begin{equation}\label{visco2}
e(r)+ \alpha\,r^{1/5} e(r)^{3/5}=1 \qquad \forall r \geq 0
\end{equation}
where $\alpha >0$ is a constant depending on the material viscosity.  It was proved in \cite[p. 1006]{AlonsoIumj} that, on the basis of \eqref{visco2}, Assumptions \ref{HYP} are met. From equation \eqref{visco2}, one deduces that
$$
\lim_{r \to 0^+}e(r)=1,\ \ \mbox{and}\ \ e(r) \simeq 1-\alpha r^{1/5}\ \ \text{for}\ \ r \simeq 0
$$
which means that Assumption \ref{HYP2} \textit{(1)} is met. Furthermore, equation \eqref{visco2} also implies that $e$ is continuously decreasing. According to Lemma A.\ref{decreas}, $e(\cdot)$ satisfy Assumptions \ref{HYP2}. Moreover, it is easy to deduce from \eqref{visco2} that Assumption \ref{HYPdiff} is satisfied.
\begin{exaA} For monotone decreasing restitution coefficient introduced in Example \ref{dec}, Assumptions \ref{HYP2} are also met by virtue of the above Lemma. In such a case, according to \eqref{eta}, the cooling of the temperature $\E(t)$ is
$$
\E(t)= O\left((1+t)^{-\frac{2}{1+\eta}}\right) \quad\text{as} \quad t \to \infty.
$$
\end{exaA}

\end{document}